\newtheorem{claim}{}[section]
\newtheorem{theorem}[claim]{Theorem}
\newtheorem{lemma}[claim]{Lemma}
\newtheorem{proposition}[claim]{Proposition}
\newtheorem{corollary}[claim]{Corollary}
\newtheorem{definition}[claim]{Definition}
\def\proclaim #1. #2\par{\medbreak
\noindent{\bf#1.\enspace}{\sl#2}\par\medbreak} \makeatother
\DeclareMathOperator{\maxten}{\otimes_{\rm max}}
\DeclareMathOperator{\minten}{\otimes_{\rm min}}
\newcommand{\minnorm}[1]{\Vert#1\Vert_{{\rm min}}}
\newcommand{\cm}[1]{C^*_{\rm max}(#1)}
\DeclareMathOperator{\cma}{C^*_{\rm max}(A)}
\newcommand{\lra}{\longrightarrow}
\DeclareMathOperator{\Cdb}{\mathbb{C}}
\DeclareMathOperator{\Ndb}{\mathbb{N}}
\DeclareMathOperator{\Bdb}{\mathbb{B}}
\DeclareMathOperator{\Ddb}{\mathbb{D}}
\DeclareMathOperator{\Kdb}{\mathbb{K}}
\begin{document}

\title{Nuclearity-related properties for
nonselfadjoint algebras}

\author[D.\ P.\ Blecher]{David P.\ Blecher}
\address{Department of Mathematics \\ University of Houston \\
Houston, TX  77204-3008\\ USA}
\email{dblecher@math.uh.edu}

\author[B.\ L.\ Duncan]{Benton L.\ Duncan}
\address{Department of Mathematics \\ North Dakota State University \\ Fargo, ND  58105-5075}
\email{benton.duncan@ndsu.edu}

\begin{abstract}
 In analogy with the $C^*$-algebra theory, we study variants appropriate to
 nonselfadjoint algebras
 of nuclearity, the
local lifting property, exactness, and the weak expectation
property. In addition, we study the relationships between these
notions, and how they are connected with the classical $C^*$-algebra
theory through the use of $C^*$-algebras generated by the algebra.
\end{abstract}

\maketitle

\let\text=\mbox

\section{Introduction}

The concept of {\em nuclearity} is fundamental in the study of
$C^*$-algebras.  It is often defined in terms of tensor products, as
also are the slightly less well known, but also fundamental,
properties known as (Lance's) {\em weak expectation property} (WEP),
and (Kirchberg's) {\em exactness} and {\em local lifting property}
(LLP).  These are  intimately related properties which a
$C^*$-algebra may or may not possess, and the relations between them
and ensuing theory are rich and profound. The goal of the present
paper is to find appropriate generalizations of these notions to
possibly nonselfadjoint operator algebras; and to illuminate some of
the good, the bad, and the ugly that ensues. More precisely, some of
the elegant basic implications and arguments in the $C^*$-algebraic
are still valid in the nonselfadjoint case, while others only seem
to be true for very special classes of algebras. That is, for
extremely general classes of algebras some of these properties may
be a little restrictive; and in this sense the present investigation
is not as successful as some of our  previous generalizations of
$C^*$-algebraic notions to nonselfadjoint algebras. Nonetheless, the
properties we introduce are natural and do not seem to have been
considered before.
  Moreover, en route we present several new and very
basic results of independent interest. There also seems to be some
hope, as the reader will see at points in our paper, that they may
lead in the future to a new approach to Kirchberg's famous
conjectures. Another of our motivations was to find properties that
imply that $Ext$ of the $C^*$-envelope of a nonselfadjoint algebra
is a group, and this angle will be prominent in the sequel
\cite{Bseq}.  We recall from \cite{Kir1} that the LLP is more than
intimately connected with  $Ext$ of a separable $C^*$-algebra $B$
being a group (the first implies the second, and the converse is an
open question).

The new concepts introduced here are called {\em $C^*$-nuclearity},
the {\em algebra weak expectation property} (AWEP), the {\em
homomorphism local lifting property} (HLLP), {\em
$\Bdb$-nuclearity}, and {\em subexactness}. Motivated primarily by
Kirchberg's astonishing paper \cite{Kir1}, and its operator space
sequel due to Pisier, Effros, Ruan, and others, we try to build
connections between our new variants that are similar to the
classical $C^*$-algebraic theory. Throughout, we use generated
$C^*$-algebras to relate these notions to their classical
counterpart.  For instance, if $A$ is {\em Dirichlet} (that is, a
unital algebra with $A + A^*$ is norm dense in its $C^*$-envelope),
then $A$ has each of the five properties above iff the
$C^*$-envelope of $A$ has the matching $C^*$-algebra property (with
one exception in one direction: we are not sure if $A$ having HLLP
implies that the $C^*$-envelope has LLP).

It is important for us to say that of course a significant part of
modern operator space theory is devoted to linear analogues of some
of the properties mentioned above; very strikingly some of the
above-mentioned properties and their beautiful theory generalizes to
operator spaces (see e.g.\ \cite{ER,Pisbook}). However, with one
exception the operator space versions of these properties turn out
not to be appropriate for nonselfadjoint operator algebras, at least
for the approach taken here.

Turning to notation, by an operator algebra, we mean a closed, not
necessarily selfadjoint, algebra of operators on a Hilbert space. We
will sometimes silently be using very basic principles from the
theory of operator algebras, all of which are explained in
\cite{BLM}.   An operator algebra is
 {\em unital} if it has an
identity of norm $1$, and is  {\em approximately unital} if it has a
contractive two-sided approximate identity (cai).
For simplicity we will usually assume that our operator algebras
are approximately unital, but in many of the results this restriction
is not necessary, by the usual trick of considering the
unitization.  We write $A^1$ for the unitization
 of a nonunital operator algebra (see \cite[Section 2.1]{BLM}).
A {\em
unital-subalgebra}, is a subalgebra containing the identity of the
bigger algebra.  All ideals are assumed to be two-sided and closed.
Our morphisms will be linear completely contractive homomorphisms
$\theta : A \to B$ between operator algebras.   If $\theta(1) = 1$
we say that $\theta$ is {\em unital}.  As usual, {\em UCP} means
unital and completely positive.
 A {\em $C^*$-cover} of $A$ is a $C^*$-algebra
containing a copy of $A$ completely isometrically as a subalgebra,
which is generated by this copy. There are two `universal'
$C^*$-covers of $A$, a `smallest' and a `largest': the {\em
$C^*$-envelope} $C^*_{\rm e}(A)$, and the {\em maximal
$C^*$-dilation}  $\cma$. We refer the reader to \cite{BLM} for a
discussion of these two notions; they have the extremal universal
properties which the reader would expect. A new term: we will say
that $A$ is $C^*$-split if there exists a linear complete
contraction $u : C^*_{\rm e}(A) \to \cma$ extending the identity map
on $A$.  An example of this is any {\em Dirichlet} operator algebra
(this may be seen by Arveson's Proposition 1.2.8 of \cite{SOC}).  By
the well-known `rigidity' property of $C^*_{\rm e}(A)$, it follows
that $u$ is a right inverse for the epimorphism $\cma \to C^*_{\rm
e}(A)$.  We write $\Bdb$ and $\Kdb$ for respectively the bounded and
compact linear operators on $\ell^2$. We refer the reader to
\cite[Chapter 6]{BLM} or \cite{PaP} for the tensor products of
operator algebras used here. In places we also use notation and
results from the paper \cite{BR} on extensions of nonselfadjoint
algebras. By an {\em extension in the sense of} \cite{BR}, we will
mean a short  exact sequence
$$0 \lra A\overset{\alpha}\lra B\overset{\beta}\lra C \lra 0$$
of nontrivial operator algebras, with $A$ approximately unital; and
$\alpha, \beta$ are completely contractive homomorphisms, with
$\alpha$ completely isometric, and $\beta$ a complete quotient map.
Applications of our work to the theory of extensions will be
presented in a forthcoming sequel \cite{Bseq} to \cite{BR}.

\section{$C^*$ and $\Bdb$-nuclearity} \label{cnucl}

\begin{definition} An operator algebra $A$ is $C^*$-nuclear (resp.\
$\Bdb$-nuclear) if \[ A \maxten B = A \minten B \] for every
$C^*$-algebra $B$ (resp.\ for $B = \Bdb = B(\ell^2)$).
\end{definition}

It is important to note that if $A$ is not selfadjoint then allowing
$B$ in the definition of $C^*$-nuclear to be nonselfadjoint yields
a vacuous class, by  \cite[Corollary 7.1.8]{BLM}.

In 6.2.5 of \cite{BLM} it is remarked that any Dirichlet uniform
algebra, such as the disk algebra $A(\Ddb)$, is $C^*$-nuclear in our
terminology. Of course if $A$ is a $C^*$-algebra then $A$ is
$C^*$-nuclear if and only if $A$ is nuclear, and $A$ is
$\Bdb$-nuclear if and only if $A$ has the local lifting property
(LLP), see \cite{Kir1,Pisbook}.  As in the $C^*$-algebra case, we
will see that $C^*$-nuclearity implies the other new properties
mentioned in the introduction, with the possible exception of
subexactness ($C^*$-nuclearity does imply exactness).

Since we will use it many times we restate Lemma 2.8 of \cite{BR}:

\begin{lemma} \label{28} For an approximately unital operator
algebra $A$, and a $C^*$-algebra $B$, we have $A \maxten B \subset
C^*_{\rm max}(A \maxten B) = C^*_{\rm max}(A) \maxten B$ completely
isometrically.
\end{lemma}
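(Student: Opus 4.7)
The plan is to derive both assertions from the universal property of $C^*_{\rm max}$. The inclusion $A \maxten B \subset C^*_{\rm max}(A \maxten B)$ is immediate from the definition of the maximal $C^*$-cover, so the content of the lemma is the identification of $C^*_{\rm max}(A \maxten B)$ with $C^*_{\rm max}(A) \maxten B$. My strategy is to verify that $C^*_{\rm max}(A) \maxten B$, equipped with a natural embedding of $A \maxten B$, satisfies the universal property that characterizes $C^*_{\rm max}(A \maxten B)$.

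First I would construct the completely isometric embedding $A \maxten B \hookrightarrow C^*_{\rm max}(A) \maxten B$. Passing to unitizations as needed, the map $a \otimes b \mapsto a \otimes b$ is completely contractive because $A \hookrightarrow C^*_{\rm max}(A)$ is completely isometric and the operator algebra max tensor product is functorial for cc homomorphisms. For the reverse norm estimate, any pair of commuting cc homomorphisms $\pi : A \to B(H)$, $\rho : B \to B(H)$ defining the norm on $A \maxten B$ extends, via the universal property of $C^*_{\rm max}(A)$, to a $*$-homomorphism $\tilde\pi : C^*_{\rm max}(A) \to B(H)$. The crucial observation is that $\tilde\pi$ still commutes with $\rho$: since $\rho(B)$ is selfadjoint and commutes with $\pi(A)$, it also commutes with $\pi(A)^*$, and hence with the $C^*$-algebra they generate. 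So $(\tilde\pi,\rho)$ extends $(\pi,\rho)$ to a representation of $C^*_{\rm max}(A) \maxten B$, giving the reverse inequality. Since $A$ generates $C^*_{\rm max}(A)$ as a $C^*$-algebra, the image of $A \maxten B$ is $C^*$-dense in $C^*_{\rm max}(A) \maxten B$, so the latter is a $C^*$-cover of $A \maxten B$.

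To verify the maximal universal property, consider any cc homomorphism $\pi : A \maxten B \to D$ with $D$ a $C^*$-algebra. The restrictions produce a commuting pair consisting of a cc homomorphism $\pi_A : A \to D$ and a $*$-homomorphism $\pi_B : B \to D$. The universal property of $C^*_{\rm max}(A)$ promotes $\pi_A$ to a $*$-homomorphism $\sigma : C^*_{\rm max}(A) \to D$, and $\sigma$ commutes with $\pi_B$ by the same selfadjointness argument. The universal property of the $C^*$-maximal tensor product then yields a $*$-homomorphism $C^*_{\rm max}(A) \maxten B \to D$ extending $\pi$, which is exactly the universal property characterizing $C^*_{\rm max}(A \maxten B)$. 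The main obstacle is the standard but delicate technicality of handling approximately unital (rather than unital) $A$: extracting the restrictions $\pi_A, \pi_B$ and setting up the embeddings $a \mapsto a \otimes 1$ requires working inside the multiplier algebra of $D$, or equivalently passing to unitizations, a routine maneuver but one requiring care that commutativity of ranges survives.
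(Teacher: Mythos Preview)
Your proof is correct. Note, however, that the paper does not itself prove this lemma: as the sentence immediately preceding it indicates, this is a restatement of Lemma~2.8 of \cite{BR}, so there is no in-paper argument to compare against. Your approach via the universal property of $C^*_{\rm max}$---with the key step that selfadjointness of $\rho(B)$ forces it to commute with $\pi(A)^*$ and hence with $\tilde\pi(C^*_{\rm max}(A)) = C^*(\pi(A))$---is the natural one and is essentially how such identifications are established. Your acknowledgment of the approximately unital technicality is appropriate; the standard multiplier-algebra or unitization device (cf.\ \cite[2.1, 6.1.6, 6.1.11]{BLM}) disposes of it.
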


It is not important that the Hilbert space appearing in the
definition of $\Bdb$-nuclearity be separable:

\begin{lemma} \label{cbh} An approximately unital operator
algebra $A$ is $\Bdb$-nuclear if and only if $A \maxten B(H) = A
\minten B(H)$ for every infinite dimensional Hilbert space $H$.
\end{lemma}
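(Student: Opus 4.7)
The ``if'' direction is immediate on taking $H = \ell^2$. For the converse, assume $A \maxten B(\ell^2) = A \minten B(\ell^2)$, let $H$ be an arbitrary infinite-dimensional Hilbert space, and fix $x = \sum_{i=1}^n a_i \otimes T_i \in A \odot B(H)$; the goal is to show $\|x\|_{\max} \le \|x\|_{\min}$.

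The plan is to invoke the universal property of the maximal tensor product, so that $\|x\|_{\max}$ equals the supremum of $\|\sum \pi(a_i) \sigma(T_i)\|$ over pairs of commuting representations $\pi : A \to B(K)$ (completely contractive homomorphism) and $\sigma : B(H) \to B(K)$ (unital $*$-homomorphism). Fix such a pair and decompose $\sigma = \sigma_s \oplus \sigma_n$ via the standard structure theory for representations of $B(H)$, where $\sigma_s$ is unitarily equivalent to an ampliation $T \mapsto T \otimes I_L$ and $\sigma_n$ annihilates $\Kdb(H)$; commutation of $\pi$ with the central projection of this splitting yields a compatible decomposition $\pi = \pi_s \oplus \pi_n$. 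On the spatial summand the commutant constraint forces $\pi_s(a) = I_H \otimes \pi'(a)$ for some completely contractive homomorphism $\pi' : A \to B(L)$, and then $\sum \pi_s(a_i)\sigma_s(T_i) = \sum T_i \otimes \pi'(a_i)$, whose operator norm is at most $\|x\|_{\min}$ by the injectivity of the minimal tensor norm applied to the contractive representation $\pi'$.

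For the non-spatial summand, the plan is to exploit the unital $*$-embedding $B(\ell^2) \hookrightarrow B(H)$ arising from an identification $H \cong \ell^2 \otimes K$ with $\dim K$ infinite (so $T \mapsto T \otimes I_K$), composed with $\sigma_n$, producing a unital $*$-homomorphism $B(\ell^2) \to B(K_n)$ that commutes with $\pi_n$; via the universal property this pairs with $\pi_n$ to give a completely contractive representation of $A \maxten B(\ell^2)$, to which $\Bdb$-nuclearity applies. The main obstacle will be the non-spatial case: the $T_i$'s need not lie in the embedded copy of $B(\ell^2)$, so the hypothesis does not apply in a direct fashion. The resolution should exploit the fact that $\sigma_n$ is determined by its values modulo $\Kdb(H)$, using an absorbing argument to replace each $T_i$ by an element of the embedded $B(\ell^2)$ without affecting $\sum \pi_n(a_i)\sigma_n(T_i)$. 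Combining the spatial and non-spatial bounds yields $\|x\|_{\max} \le \|x\|_{\min}$, completing the converse.
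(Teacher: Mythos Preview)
Your approach differs substantially from the paper's, and the non-spatial step contains a real gap.

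The paper handles the forward direction in one stroke, with no decomposition of representations of $B(H)$: given $u = \sum_k a_k \otimes T_k$, let $D$ be the separable unital $C^*$-subalgebra of $B(H)$ generated by $1$ and the $T_k$; choose a $*$-isomorphism $\pi$ of $D$ onto a $C^*$-subalgebra of $\Bdb$, and extend $\pi^{-1}$ to a UCP map $\theta : \Bdb \to B(H)$ by Arveson. Functoriality of $\maxten$ under UCP maps then gives $\Vert u \Vert_{A \maxten B(H)} \le \Vert \sum_k a_k \otimes \pi(T_k) \Vert_{A \maxten \Bdb}$; $\Bdb$-nuclearity converts this to a $\minten$-norm, and injectivity of $\minten$ together with $\pi$ being a $*$-isomorphism brings it back to $\Vert u \Vert_{A \minten B(H)}$.

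In your plan the spatial summand is handled correctly, but the non-spatial step does not work as written. You propose to replace each $T_i$ by some $S_i$ lying in the range of the amplification $B(\ell^2) \hookrightarrow B(\ell^2 \otimes K) = B(H)$, chosen so that $\sigma_n(S_i) = \sigma_n(T_i)$. There is no reason such $S_i$ exist: the image of the embedded $B(\ell^2)$ in $B(H)/\Kdb(H)$ is a \emph{proper} unital $C^*$-subalgebra (for instance $I_{\ell^2} \otimes P$, for a projection $P$ on $K$ of infinite rank and infinite corank, is not congruent modulo $\Kdb(H)$ to any $S \otimes I_K$), so a representation $\sigma_n$ that factors through the Calkin algebra need not take the same values on the embedded $B(\ell^2)$ as on the $T_i$. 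The appeal to an ``absorbing argument'' is too vague to close this; Voiculescu-type absorption concerns approximate unitary equivalence of representations, not surjectivity of a fixed subalgebra modulo the compacts. The clean fix is precisely the paper's separable-subalgebra trick above, after which the spatial/singular decomposition becomes unnecessary.
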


\begin{proof} Note that since $\Bdb$ can be embedded as a
complemented corner of $B(H)$ we see from Lemma \ref{28} and e.g.\
\cite[6.1.10]{BLM} that we have canonical complete isometries \[ A
\maxten \Bdb \subset \cma \maxten \Bdb \subset \cma \maxten B(H) =
\cm{A \maxten B(H)}.\] Thus $A \maxten \Bdb \subset A \maxten B(H)$.
Clearly $A \minten \Bdb \subset A \minten B(H)$ and thus the reverse
implication holds.

For the forward direction suppose $A$ is $\Bdb$-nuclear, and let $u
= \sum_{k=1}^n \, a_k \otimes T_k$ for $a_k \in A, T_k \in B(H)$.
Let $D$ be the $C^*$-algebra generated by $1$ and the $T_k$.  This
is separable and so there is a unital $*$-isomorphism $\pi$ from $D$
onto a $C^*$-algebra in $\Bdb$, carrying $T_k$ to $S_k$ say. The
inverse of this $*$-isomorphism extends to a UCP map $\theta : \Bdb
\to B(H)$.
  Then $$\Vert u \Vert_{A \maxten B(H)}
= \Vert \sum_{k=1}^n \, a_k \otimes \theta(S_k) \Vert_{A \maxten
B(H)} \leq \Vert \sum_{k=1}^n \, a_k \otimes S_k \Vert_{A \maxten
\Bdb}. $$ Since $A$  is $\Bdb$-nuclear, the last norm equals
$$\Vert \sum_{k=1}^n \, a_k \otimes \pi(T_k)  \Vert_{A \minten \Bdb} =
\Vert \sum_{k=1}^n \, a_k \otimes T_k \Vert_{A \minten D} = \Vert
\sum_{k=1}^n \, a_k \otimes T_k \Vert_{A \minten B(H)},$$ by
injectivity of $\minten$.  The result is now clear.
\end{proof}

One theme of our paper is that an operator space or $C^*$-algebraic
property such as  nuclearity or the LLP for a $C^*$-cover of an
algebra $A$, often says something about $C^*$-nuclearity or
$\Bdb$-nuclearity for $A$; or vice versa.  The following is a fairly
superficial result of this kind.  In the Examples section we shall
show that there are commonly met operator algebras for which $\cma$
has the LLP.

\begin{proposition}\label{LLPd} If $\cma$ is nuclear (resp.\ has the LLP)
then $A$ is $C^*$-nuclear (resp.\ is $\Bdb$-nuclear).
\end{proposition}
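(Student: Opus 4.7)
The plan is to chain together Lemma \ref{28}, the hypothesis on $\cma$, and the injectivity of $\minten$ to squeeze $A \maxten B$ and $A \minten B$ between the same norms inside $\cma \minten B$.

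First I would fix a $C^*$-algebra $B$ (in the nuclear case an arbitrary one, in the LLP case $B = \Bdb$). By Lemma \ref{28} we have a canonical completely isometric inclusion
\[
A \maxten B \; \subset \; \cma \maxten B.
\]
By the assumption on $\cma$ (nuclearity, or $\Bdb$-nuclearity/LLP applied with $B = \Bdb$, together with the remark in the excerpt identifying LLP with $\Bdb$-nuclearity for $C^*$-algebras), the target coincides with $\cma \minten B$, completely isometrically. Hence
\[
A \maxten B \; \subset \; \cma \minten B
\]
completely isometrically. On the other hand, $A \subset \cma$ completely isometrically and $\minten$ is injective, so $A \minten B \subset \cma \minten B$ is also a complete isometry.

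Both $A \maxten B$ and $A \minten B$ are therefore completely isometrically embedded in the same space $\cma \minten B$, and both embeddings restrict to the identity on the algebraic tensor product $A \otimes B$. For any $u \in A \otimes B$ this yields
\[
\|u\|_{A \maxten B} \;=\; \|u\|_{\cma \maxten B} \;=\; \|u\|_{\cma \minten B} \;=\; \|u\|_{A \minten B},
\]
and the same identity at all matrix levels. Since the algebraic tensor product is dense in both completions, the identity map extends to a complete isometry $A \maxten B = A \minten B$, giving $C^*$-nuclearity (resp.\ $\Bdb$-nuclearity) of $A$.

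There is no real obstacle here beyond invoking the correct ingredients: the only subtle point is making sure that the middle equality $\cma \maxten B = \cma \minten B$ is legitimately available from the hypothesis in each of the two cases, which in the LLP case relies on Kirchberg's characterization already quoted in the excerpt. The selfadjointness of $B$ is essential throughout, since it is what makes Lemma \ref{28} applicable.
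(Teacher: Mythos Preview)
Your proof is correct and follows essentially the same approach as the paper's: both arguments use Lemma \ref{28}, the hypothesis $\cma \maxten B = \cma \minten B$, and injectivity of $\minten$ to identify the two norms on $A \otimes B$. The paper phrases it as a chain of complete isometries $A \minten B \to \cma \minten B = \cma \maxten B = \cm{A \maxten B}$ whose composite lands in $A \maxten B$, while you compare both completions inside $\cma \minten B$; these are the same idea read in opposite directions.
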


\begin{proof} For a $C^*$-algebra $B$ (resp.\ $B = \Bdb$)
we have canonical complete isometries \[ A \minten B \to \cma
\minten B = \cma \maxten B = \cm{A \maxten B} \] which compose to a
map whose range is in $A \maxten B$. \end{proof}

\medskip

We now look at stability of $C^*$-nuclearity and $\Bdb$-nuclearity
under the usual operator algebra constructions.

\begin{proposition}\label{maxqp} If $A$ is an approximately unital ideal in an
approximately unital operator algebra $B$, and if $D$ is any
approximately unital operator algebra, then $A \maxten D \subset B
\maxten D$ completely isometrically. \end{proposition}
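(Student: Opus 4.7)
One direction is automatic: for $u \in M_n(A \otimes D)$, every pair $(\sigma_B, \sigma_D)$ of commuting completely contractive homomorphisms $B, D \to B(H)$ restricts to a commuting pair $(\sigma_B|_A, \sigma_D)$ with the same value $\|(\sigma_B \otimes \sigma_D)_n(u)\|$; hence $\|u\|_{M_n(B \maxten D)} \leq \|u\|_{M_n(A \maxten D)}$. The content of the proposition is the reverse inequality, which I plan to obtain by extending $\pi_A$ to a cc homomorphism of $B$ commuting with $\pi_D$.

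The plan rests on observing that for approximately unital $A$, any commuting cc pair $(\pi_A, \pi_D)$ on $B(H)$ may be replaced by one with $\pi_A$ nondegenerate without changing $\|(\pi_A \otimes \pi_D)_n(u)\|$ for $u \in M_n(A \otimes D)$. Set $K_A = \overline{\pi_A(A) H}$. Passing $\pi_A$ through its $*$-extension to $\cm{A}$ and using the cai of $A$ shows that $\pi_A$ vanishes on $K_A^\perp$, so $\pi_A$ is block-diagonal with respect to $H = K_A \oplus K_A^\perp$ with lower block zero; the commutation $\pi_A(a) \pi_D(d) = \pi_D(d) \pi_A(a)$ then forces both off-diagonal blocks of $\pi_D$ to vanish (using nondegeneracy of $\pi_A|_{K_A}$ and $\bigcap_a \ker \pi_A(a) = \{0\}$). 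Thus $\pi_D$ is also block-diagonal, and $(\pi_A \otimes \pi_D)_n(u) = (\pi_A|_{K_A} \otimes \pi_D|_{K_A})_n(u) \oplus 0$, so we may assume $\pi_A$ is nondegenerate. In that case the classical ideal-extension formula
\[\tilde{\pi}_B(b)\, \pi_A(a)\xi := \pi_A(ba) \xi, \qquad b \in B,\ a \in A,\ \xi \in H,\]
defines a completely contractive homomorphism $\tilde{\pi}_B : B \to B(H)$ extending $\pi_A$; well-definedness and complete contractivity follow by writing $\tilde{\pi}_B(b) \sum_i \pi_A(a_i) \xi_i = \lim_\lambda \pi_A(be_\lambda) \sum_i \pi_A(a_i) \xi_i$ for the cai $\{e_\lambda\}$ of $A$ and using $\|\pi_A(be_\lambda)\| \leq \|b\|$, the argument running identically at every matrix level.

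The crucial step is to verify that $\tilde{\pi}_B(B)$ still commutes with $\pi_D(D)$. For $b \in B, d \in D, a \in A, \xi \in H$, the commutation of $\pi_A$ with $\pi_D$ together with $ba \in A$ give
\[\tilde{\pi}_B(b)\pi_D(d)\pi_A(a)\xi = \pi_A(ba)\pi_D(d)\xi = \pi_D(d)\pi_A(ba)\xi = \pi_D(d)\tilde{\pi}_B(b)\pi_A(a)\xi,\]
and nondegeneracy of $\pi_A$ lifts this to commutation on all of $H$. Hence $(\tilde{\pi}_B, \pi_D)$ is a commuting cc pair on $B, D$, so
\[\|(\pi_A \otimes \pi_D)_n(u)\| = \|(\tilde{\pi}_B \otimes \pi_D)_n(u)\| \leq \|u\|_{M_n(B \maxten D)},\]
and taking the supremum over pairs yields $\|u\|_{M_n(A \maxten D)} \leq \|u\|_{M_n(B \maxten D)}$. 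I expect the main hurdle to be the decomposition step reducing to nondegenerate $\pi_A$; granted that, the ideal-extension of $\pi_A$ to $\tilde{\pi}_B$ is the standard nonselfadjoint analogue of the classical $C^*$-algebra construction, and the commutation check is essentially formal.
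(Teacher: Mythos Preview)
Your proof is correct and follows essentially the same route as the paper's: reduce to a nondegenerate representation of $A$, extend it to $B$ via the standard ideal-extension formula, and verify that the extension still commutes with $\pi_D$. The paper simply invokes \cite[2.6.13, 6.1.1, 6.1.11]{BLM} for the reduction and the extension, whereas you spell these steps out explicitly; the commutation check is identical in both.
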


\begin{proof} Take two nondegenerate commuting completely
contractive representations $\pi: A \to B(H)$ and $\theta: D \to
B(H)$.  By 2.6.13 of \cite{BLM} we can extend $\pi$ to a completely
contractive representation $\tilde{\pi}: B \to B(H)$ with \[
\theta(d) \tilde{\pi}(b) \pi(a) \zeta = \theta(d) \pi(ba) \zeta =
\pi(ba) \theta (d) \zeta = \tilde{\pi}(b) \theta(d) \pi(a) \zeta,\]
for all $a \in A, b \in B, d \in D,$ and $\zeta \in H$.  It follows
that $ \tilde{\pi}$ commutes with $\theta$ and hence the closure of
$A \otimes D $ in $B \maxten D$ will have the desired property for
$A \maxten D$ (see 6.1.1 and 6.1.11 in \cite{BLM}.)
\end{proof}

\begin{corollary}\label{ideal} If $A$ is an approximately unital ideal in a
$C^*$-nuclear (resp.\ $\Bdb$-nuclear) approximately unital
algebra $B$, then $A$ is $C^*$-nuclear (resp.\ $\Bdb$-nuclear).
\end{corollary}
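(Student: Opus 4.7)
The plan is to combine the previous proposition, which gives a completely isometric embedding at the \emph{max}-level, with the well-known injectivity of $\minten$, and then squeeze the $A\otimes D$ norms between the corresponding norms on $B\otimes D$.

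More precisely, fix a $C^*$-algebra $D$ (for the $C^*$-nuclear case) or set $D=\Bdb$ (for the $\Bdb$-nuclear case). Since $A$ and $B$ are both approximately unital and $A$ is an ideal in $B$, Proposition \ref{maxqp} yields a canonical complete isometry
\[
A \maxten D \; \hookrightarrow \; B \maxten D .
\]
On the other hand, since $A \subset B$ completely isometrically, injectivity of the minimal tensor product (see e.g.\ \cite[6.1.10]{BLM}) gives a canonical complete isometry $A \minten D \hookrightarrow B \minten D$.

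Now for any $u \in A \otimes D$, the hypothesis that $B$ is $C^*$-nuclear (resp.\ $\Bdb$-nuclear) means $\|u\|_{B \maxten D} = \|u\|_{B \minten D}$. Combining this with the two complete isometries above gives
\[
\|u\|_{A \maxten D} \; = \; \|u\|_{B \maxten D} \; = \; \|u\|_{B \minten D} \; = \; \|u\|_{A \minten D},
\]
and the same argument works at every matrix level. Hence $A \maxten D = A \minten D$ completely isometrically, which is the required conclusion.

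There is no real obstacle here; the only thing to check is that all the identifications are made at the correct matrix level and that Proposition \ref{maxqp} is available, which it is because $A$ is approximately unital by hypothesis. In the $\Bdb$-nuclear case one just takes $D=\Bdb$ throughout.
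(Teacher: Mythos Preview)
Your proof is correct and is exactly the argument the paper has in mind: the corollary is stated immediately after Proposition~\ref{maxqp} with no separate proof, so the intended reasoning is precisely to combine the completely isometric inclusion $A \maxten D \subset B \maxten D$ from that proposition with the injectivity of $\minten$ and the hypothesis $B \maxten D = B \minten D$.
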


{\bf Remark.}  If $A$ is a subalgebra of an operator algebra $B$,
and if $D$ is a nonselfadjoint operator algebra, then it need not be
the case that $A \maxten D \subset B \maxten D$ completely
isometrically. Note that it follows from Lemma \ref{28} that $A
\maxten D \subset \cm{A} \maxten D$ for any $C^*$-algebra $D$, but
this relation can be false if $D$ is nonselfadjoint. To see this let
$A$ be an operator algebra for which $A \minten A(\Ddb) \neq A
\maxten A(\Ddb)$ (see \cite[Corollary 7.1.8]{BLM}).  Now $A(\Ddb)$
is $C^*$-nuclear as we remarked earlier, and hence we must have
$\cma \minten A(\Ddb) = \cma \maxten A(\Ddb)$.  This contains $A
\minten A(\Ddb)$, and so it cannot contain $ A \maxten A(\Ddb)$.

\medskip

To deal with quotients we will need the following.  We do not know
if the result is true for nonselfadjoint $D$:

\begin{lemma}\label{maxq} If $D$ is a $C^*$-algebra and if $A$ is an
approximately unital ideal in an operator algebra $B$, then $(B
\maxten D) /(A \maxten D ) \cong (B/A) \maxten D$ completely
isometrically.  The same relation holds if $D$ is an approximately
unital operator algebra and $B$ is a $C^*$-algebra with ideal $A$.
\end{lemma}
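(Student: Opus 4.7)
The plan is to establish the complete isometry by a representation-theoretic comparison: both matrix norms are suprema over completely contractive homomorphisms into $B(H)$, and an invariant subspace decomposition shows that every representation of the quotient factors through $(B/A) \maxten D$, with the $C^*$-structure on one of the two factors being exactly what makes the decomposition work.

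For the preliminaries, by Proposition~\ref{maxqp} the inclusion $A \maxten D \subset B \maxten D$ is completely isometric; the two-sided ideal property is immediate on elementary tensors and extends by continuity. Thus $(B \maxten D)/(A \maxten D)$ is a genuine operator algebra. The universal property of $\maxten$ associates to every commuting completely contractive pair $(\bar\pi,\theta)$ on $(B/A,D)$ the pair $(\bar\pi\circ q,\theta)$ on $(B,D)$, yielding a completely contractive surjective homomorphism
\[
\bar q : (B \maxten D)/(A \maxten D) \longrightarrow (B/A) \maxten D,
\]
and in particular $\|\tilde q_n(u)\| \leq \|u + M_n(A\maxten D)\|$ for every $u \in M_n(B\maxten D)$ and $n$.

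The substance is the reverse inequality $\|u + M_n(A\maxten D)\| \leq \|\tilde q_n(u)\|$. I would compute the left-hand side as the supremum of $\|\sigma_n(u)\|$ over completely contractive homomorphisms $\sigma : B\maxten D \to B(H)$ that vanish on $A\maxten D$ (matrix norms on an operator algebra quotient are determined by such representations, since the quotient embeds completely isometrically into its maximal $C^*$-dilation). Each such $\sigma$ splits as $\sigma = \pi\cdot\theta$ for a commuting completely contractive pair with $\pi(a)\theta(d)=0$ for all $a\in A, d\in D$. For the first assertion, let $P$ be the projection onto $\overline{\theta(D)H}$. Since $D$ is a $C^*$-algebra, $\theta$ is a $*$-representation and $D=D^*$, so $(1-P)H = \{h : \theta(d)h = 0 \text{ for all } d\in D\}$; combining this with $[\pi(b),\theta(d)]=0$ shows both $PH$ and $(1-P)H$ are invariant under $\pi$ and $\theta$. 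On $(1-P)H$ the map $\sigma$ vanishes; on $PH$, density of $\theta(D)PH$ combined with $\pi(a)\theta(d)=0$ forces $\pi|_A = 0$, so $\pi|_{PH}$ factors through $q$ and $\sigma|_{PH}$ corresponds to a completely contractive homomorphism $\bar\sigma:(B/A)\maxten D \to B(PH)$ with $\sigma|_{PH} = \bar\sigma\circ\tilde q$. Since $\sigma_n(u)$ is block-diagonal with respect to $PH\oplus(1-P)H$ and the second block is zero,
\[
\|\sigma_n(u)\| = \|\bar\sigma_n(\tilde q_n(u))\| \leq \|\tilde q_n(u)\|,
\]
and taking the supremum over $\sigma$ yields the desired inequality. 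The second assertion is handled symmetrically: with $B$ a $C^*$-algebra, $\pi$ is the $*$-representation and $A = A^*$; take $Q$ to be the projection onto $\overline{\pi(A)H}$ to obtain $\theta|_{QH}=0$ and $\pi|_A=0$ on $(1-Q)H$ by the mirror computations.

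The main obstacle is verifying the bi-invariance of $PH$ (resp.\ $QH$) and its orthogonal complement under the factor that is \emph{not} $*$-preserving. The $C^*$-hypothesis is used precisely here: rewriting the orthogonal complement as a joint kernel relies on $\theta(d)^* = \theta(d^*)$ together with $D = D^*$ (resp.\ the analogous identities for $\pi$ and $A$), and this kernel description is what lets the commutation $[\pi(b),\theta(d)]=0$ translate into invariance. Without $*$-structure on at least one factor the decomposition has no obvious substitute, which dovetails with the authors' remark that the case of fully nonselfadjoint $D$ is open.
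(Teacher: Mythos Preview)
Your argument is correct and follows a genuinely different route from the paper's. The paper proceeds by embedding the whole sequence into the $C^*$-algebraic extension
\[
0 \lra \cma \maxten D \lra \cm{B} \maxten D \lra \cm{B/A} \maxten D \lra 0
\]
via Lemma~\ref{28} and \cite[Lemma~2.7]{BR}, then verifies the intersection condition $(\cma \maxten D) \cap (B \maxten D) = \overline{A \otimes D}$ with an approximate-identity argument, and finally invokes the subextension criterion \cite[Proposition~3.6]{BR} to conclude exactness of the nonselfadjoint sequence. You instead work directly with the universal property of $\maxten$: every representation of $B \maxten D$ annihilating $A \maxten D$ is shown, via your invariant-subspace decomposition, to factor through $(B/A) \maxten D$. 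Your approach is more elementary and self-contained, and it makes transparent exactly where the $*$-structure on one factor is used (as you note, it is what rewrites the orthogonal complement as a joint kernel and hence makes it invariant under the other factor); the paper's approach has the advantage of slotting the result into the subextension framework of \cite{BR} that is used repeatedly afterward.

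One small point worth tightening: the splitting $\sigma = \pi \cdot \theta$ for a general completely contractive homomorphism $\sigma$ is immediate only in the unital case or after reduction to a nondegenerate representation (cf.\ \cite[6.1.1, 6.1.11]{BLM}). Since $(B \maxten D)/(A \maxten D)$ admits a completely isometric nondegenerate representation, you may simply take $\sigma$ nondegenerate from the start; then $\theta$ is automatically nondegenerate, your projection $P$ is the identity, and $\pi|_A = 0$ follows immediately from density of $\theta(D)H$. In the unital setting your $P$-decomposition is precisely what is needed to handle the possibly degenerate $\theta(d) = \sigma(1 \otimes d)$, so either reading of your argument is sound.
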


\begin{proof} The assertions have similar proofs so we prove only
the first.  We know from \cite[Lemma 2.7]{BR} that $\cma$ is an
approximately unital ideal in $\cm{B}$, and that $ \cm{B}/\cma \cong
\cm{B/A}$.  Setting $C = B/A$, and using the usual $C^*$-algebra
result, we have the extension \[ 0 \lra \cma \maxten D \lra \cm{B}
\maxten D \lra \cm{C} \maxten D \lra 0. \]  We will show that \[ 0
\lra A \maxten D \lra B \maxten D \lra C \maxten D \lra 0
\] is a {\em subextension} of the first extension, in the sense of
\cite[Section 3.6]{BR}.  To see this first note that, by Lemma
\ref{28}, each term in the last sequence is a subalgebra of the
matching term in the $C^*$-algebra extension. Moreover the
intersection $(C^*_{\rm max}(A) \maxten D) \cap (B \maxten D)$ in
$C^*_{\rm max}(B)  \maxten D$ is the closure of $A \otimes D$, as
may be seen by the following argument. If $u$ is in this
intersection, and if $(e_t)$ (resp.\ $(f_s)$)
 is a cai for
 $A$ (resp.\ $D$), then $(e_t \otimes f_s) u$ is in the
 closure of $A \otimes D$.  Taking the limit, and using
 the fact that $(f_s)$
is a cai for $C^*_{\rm max}(D)$, we see that $u$ is in the
 closure of $A \otimes D$ in $C^*_{\rm max}(B)  \maxten D$.
But this closure is in the image of $A \maxten D$
since by the previous result, $A \maxten D \subset B \maxten D
\subset C^*_{\rm max}(B)  \maxten D$.  By \cite[Proposition 3.6]{BR}
we have a subextension, proving the result.
\end{proof}

\begin{lemma} \label{free} Every $C^*$-nuclear approximately unital
operator algebra is exact, hence is locally reflexive.
\end{lemma}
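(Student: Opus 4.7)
The plan is to deduce exactness directly from Lemma \ref{maxq} together with the $C^*$-nuclearity hypothesis, and then to invoke the standard operator space result that exact implies locally reflexive.

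First I unravel the definition: to show that $A$ is exact (as an operator space, hence as an operator algebra), I must verify that for every short exact sequence of $C^*$-algebras $0 \lra J \lra B \lra B/J \lra 0$, the induced sequence
$$0 \lra A \minten J \lra A \minten B \lra A \minten (B/J) \lra 0$$
is again exact, meaning that the rightmost map is a complete quotient map with kernel the natural image of $A \minten J$.

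The key step is to apply the second assertion of Lemma \ref{maxq}, taking the pair $J \triangleleft B$ and letting our operator algebra $A$ play the role of the approximately unital operator algebra ``$D$'' there. That lemma yields a completely isometric identification $(B \maxten A)/(J \maxten A) \cong (B/J) \maxten A$; equivalently, by symmetry of $\maxten$, the sequence
$$0 \lra A \maxten J \lra A \maxten B \lra A \maxten (B/J) \lra 0$$
is exact. Since $B$, $J$, and $B/J$ are each $C^*$-algebras, $C^*$-nuclearity of $A$ lets me replace $\maxten$ by $\minten$ in all three positions via the corresponding natural complete isometries, and exactness of $A$ drops out.

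For the ``hence locally reflexive'' clause I will appeal to the standard operator-space theorem that every exact operator space is locally reflexive (see the treatment in \cite{Pisbook}); since $A$ qualifies as an operator space, this applies. No step looks likely to pose a serious obstacle: Lemma \ref{maxq} is already in hand and does all the work for the first half, while exact $\Rightarrow$ locally reflexive is off-the-shelf. The only mild point to watch is that the operator-space formulations of exactness and local reflexivity agree with those in the cited sources; no algebra-specific refinement is required.
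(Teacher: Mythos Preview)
Your proposal is correct and follows essentially the same route as the paper. The paper phrases the first step slightly differently---it passes to the $C^*_{\rm max}(D)$ extension and then argues that the $D \maxten (\cdot)$ sequence sits inside it as a subextension in the sense of \cite{BR}---but this is precisely the content of the second assertion of Lemma \ref{maxq}, which you invoke directly; the paper then (implicitly) uses $C^*$-nuclearity to replace $\maxten$ by $\minten$ exactly as you do, and appeals to \cite[Theorem 14.4.1]{ER} for the exactness and local reflexivity conclusions where you cite \cite{Pisbook}.
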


\begin{proof} If $D$ is a $C^*$-nuclear approximately unital
operator algebra, and if \[ 0 \lra A \lra B \lra C \lra 0 \] is a
$C^*$-extension, then by Lemma \ref{maxq} we have an exact sequence
\[ 0 \lra \cm{D} \maxten A \lra \cm{D} \maxten B \lra \cm{D} \maxten
C \lra 0 . \]  As in the proof of Lemma \ref{maxq} we have that \[ 0
\lra D \maxten A \lra D \maxten B \lra D \maxten C \lra 0 \] is a
subextension in the sense of \cite[Section 3.6]{BR}, hence it is a
`$1$-exact' sequence in the sense of \cite{ER}. Thus by
\cite[Theorem 14.4.1]{ER} we see that $D$ is exact.  \end{proof}

We are now ready for some results concerning quotients.  It is not
clear whether in the following proposition a weaker  condition on
$B/A$ than the completely contractive approximation property will
suffice.

\begin{proposition}\label{quot} Let $A$ be an approximately unital ideal in a
$C^*$-nuclear approximately unital operator algebra $B$. If $B/A$ is
separable and has the completely contractive approximation property,
then $B/A$ is $C^*$-nuclear. \end{proposition}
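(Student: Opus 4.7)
The plan is, for each $C^*$-algebra $D$, to show the canonical complete contraction $(B/A) \maxten D \to (B/A) \minten D$ is a complete isometry, by pulling the CCAP of $C := B/A$ back through the $C^*$-nuclearity of $B$. First I would realize $C \maxten D$ as a concrete quotient of $B \minten D$: by Lemma \ref{maxq}, $C \maxten D \cong (B \maxten D)/(A \maxten D)$; by Corollary \ref{ideal} $A$ is itself $C^*$-nuclear, so $A \maxten D = A \minten D$; and $B$'s $C^*$-nuclearity gives $B \maxten D = B \minten D$. Combining these,
$$C \maxten D \;\cong\; (B \minten D)/(A \minten D),$$
under which the canonical map $C \maxten D \to C \minten D$ is the natural contraction induced by $q \otimes \text{id}_D$, with $q : B \to C$ the (complete) quotient map of operator spaces.

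Next, using the CCAP on $C$ together with separability, I would fix a sequence of finite-rank complete contractions $\phi_n : C \to C$ with $\phi_n \to \text{id}_C$ in point-norm. Writing $F_n := \phi_n(C)$ with its inherited operator-space structure, I factor $\phi_n = u_n \circ v_n$ where $v_n : C \to F_n$ is the completely contractive corestriction and $u_n : F_n \hookrightarrow C$ is the completely isometric inclusion. The crucial move is then to lift each $u_n$ across $q$: I would invoke the standard operator-space fact that a completely contractive map from a finite-dimensional operator space into an operator-space complete quotient admits a lift with cb norm at most $1 + \epsilon$. This yields $\tilde u_n : F_n \to B$ with $q \circ \tilde u_n = u_n$ and $\|\tilde u_n\|_{cb} \le 1 + \epsilon_n$ with $\epsilon_n \to 0^+$.

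With these lifts in hand, for $v$ in the algebraic tensor product at matrix level, $v \in M_m(C \otimes D)$, I would compute
$$\|(\phi_n \otimes \text{id})(v)\|_{M_m(C \maxten D)} \;\le\; \|(\tilde u_n \otimes \text{id})(v_n \otimes \text{id})(v)\|_{M_m(B \maxten D)},$$
using that $q \otimes \text{id} : B \maxten D \to C \maxten D$ is a complete quotient (Lemma \ref{maxq}). Since $B \maxten D = B \minten D$, the right-hand norm equals its $\minten$ version, and injectivity of $\minten$ combined with $\|\tilde u_n\|_{cb} \le 1 + \epsilon_n$ and $\|v_n\|_{cb} \le 1$ bounds this by $(1+\epsilon_n) \|v\|_{M_m(C \minten D)}$. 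Because $\phi_n \to \text{id}_C$ in point-norm, $(\phi_n \otimes \text{id})(v) \to v$ in $M_m(C \maxten D)$ for algebraic tensors (since $\|c \otimes d\|_{\maxten} \le \|c\| \|d\|$), so letting $n \to \infty$ gives $\|v\|_{M_m(C \maxten D)} \le \|v\|_{M_m(C \minten D)}$, and density of algebraic tensors completes the complete isometry. The principal obstacle is the finite-dimensional lifting in the previous paragraph: I need the distortion $\epsilon_n$ to tend to $0$, since a uniform-constant lift would only yield norm equivalence rather than the desired complete isometry. The separability hypothesis is used only to replace the CCAP net by a sequence so that the $\epsilon_n \to 0$ estimate is genuinely sequential.
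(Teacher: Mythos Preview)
Your overall strategy---realize $C \maxten D$ as $(B \minten D)/(A \minten D)$ via Lemma~\ref{maxq} together with the $C^*$-nuclearity of $A$ and $B$, then use the CCAP to approximate the identity on $C$ and push the approximants through lifts to $B$---is sound and closely parallels the paper's argument. The gap is precisely in the step you flag as the ``principal obstacle'': the assertion that a completely contractive map from a finite-dimensional operator space into an \emph{arbitrary} operator-space complete quotient lifts with cb norm at most $1+\epsilon$ is not a standard fact, and indeed fails in general. Banach-space lifting of finite-dimensional maps always works because only one norm must be controlled; in the operator-space category one must control all matrix levels simultaneously, and $\inf_{\tilde u}\sup_k\|\tilde u_k\|$ can be strictly larger than $\sup_k\inf_{\tilde u}\|\tilde u_k\|$. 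What \emph{is} true, and what you need here, is that such $(1+\epsilon)$-lifts exist provided the kernel is locally reflexive; this is the Effros--Ruan lifting theory of \cite{ER1}. In the present setting $A$ is $C^*$-nuclear by Corollary~\ref{ideal}, hence exact and therefore locally reflexive by Lemma~\ref{free}, so the lift you want does exist---but this has to be invoked explicitly, not taken as automatic.

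Once that gap is filled, your argument and the paper's are essentially the same. The paper applies the full Effros--Ruan lifting theorem from \cite{ER1} (using separability and CCAP of $B/A$ together with local reflexivity of $A$) to obtain a single completely contractive linear splitting $s: B/A \to B$, from which $(B/A)\minten D \cong (B\minten D)/(A\minten D)$ follows at once. Your route unpacks this into approximate finite-rank splittings $\tilde u_n \circ v_n$, which is more or less how such global lifting theorems are proved; so you are effectively reproving the relevant piece of \cite{ER1} inside the argument rather than citing it as a black box. Neither approach avoids the need for local reflexivity of $A$.
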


\begin{proof} By Proposition \ref{ideal} we know that $A$ is
$C^*$-nuclear too.  By Lemma \ref{maxq} we have $(B/A) \maxten D
\cong (B \maxten D)/(A \maxten D)$ for any $C^*$-algebra $D$. Since
$C^*$-nuclearity implies local reflexivity by Lemma \ref{free}, the
extension of $B/A$ by $A$ satisfies the condition of the lifting
theorem in \cite{ER1}.  Hence by that result there is a completely
contractive linear splitting map $B/A \to B$.  From this it follows
easily that $(B/A) \minten D \cong (B \minten D)/(A \minten D) \cong
(B/A) \maxten D$.
\end{proof}

\begin{theorem}\label{ssh} If $C$ is a $C^*$-nuclear approximately unital
operator algebra, and if
$$ 0 \lra A \lra B
\lra C \lra 0  $$ is an extension, then
for every $C^*$-algebra $D$, the associated sequence
$$0 \lra A \otimes_{\rm min} D \lra
B \otimes_{\rm min} D \lra  C \otimes_{\rm min} D$$ is an extension
(these are extension in the sense of \cite{BR}).  So $(B \otimes_{\rm min} D)/(A \otimes_{\rm min}
D) \cong (B/A) \otimes_{\rm min} D$ completely isometrically.
\end{theorem}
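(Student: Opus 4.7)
The plan is to leverage the max-tensor extension
$$0 \lra A \maxten D \lra B \maxten D \lra C \maxten D \lra 0,$$
available by Proposition \ref{maxqp} and Lemma \ref{maxq}, and to transfer the conclusion to the min side using the hypothesis that $C$ is $C^*$-nuclear, which forces the canonical surjection $\pi_C : C \maxten D \to C \minten D$ to be a completely isometric bijection.

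Let $\beta' : B \minten D \to C \minten D$ be the map induced by $\beta : B \to C$ via functoriality of $\minten$. The inclusion $A \minten D \hookrightarrow B \minten D$ is a complete isometry by injectivity of the min tensor product, and $A \minten D$ lies in the kernel of $\beta'$ by continuity and density, so we get an induced completely contractive $\psi : (B \minten D)/(A \minten D) \to C \minten D$. In the reverse direction define
$$\phi : C \minten D \xrightarrow{\pi_C^{-1}} C \maxten D \xrightarrow{\cong} (B \maxten D)/(A \maxten D) \lra (B \minten D)/(A \minten D),$$
where the middle arrow is the inverse of the isomorphism from Lemma \ref{maxq}, and the last is induced by the canonical contraction $\pi_B : B \maxten D \to B \minten D$ (well-defined since $\pi_B$ carries $A \maxten D$ into $A \minten D$ by continuity and density). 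Then $\phi$ is completely contractive as well, and tracing an elementary tensor $\beta(b) \otimes d$ through the two compositions shows that $\psi \circ \phi$ and $\phi \circ \psi$ are the identities on the algebraic tensor products, hence everywhere by density. So $\psi$ and $\phi$ are mutually inverse completely contractive maps, forcing both to be completely isometric isomorphisms. Equivalently, $\beta'$ is a complete quotient map with kernel exactly $A \minten D$, which (together with the injection statement above) verifies that we have an extension in the sense of \cite{BR}; the final assertion of the theorem is the isomorphism $\psi$ itself.

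The only moderately subtle point is the identification of $\ker \beta'$ with $A \minten D$. The inclusion $A \minten D \subseteq \ker \beta'$ is automatic from continuity and density, but the reverse relies essentially on $C^*$-nuclearity of $C$: it is precisely this hypothesis that allows us to invert $\pi_C$ and define $\phi$; without it $\pi_C$ would generally be only a completely contractive surjection with dense image, the middle arrow of the display defining $\phi$ would have no source, and the diagram chase would fail.
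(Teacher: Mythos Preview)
Your proof is correct and follows essentially the same route as the paper's: use Proposition \ref{maxqp} and Lemma \ref{maxq} to produce the chain of canonical completely contractive maps
\[
C \maxten D \;\cong\; (B \maxten D)/(A \maxten D) \;\lra\; (B \minten D)/(A \minten D) \;\lra\; C \minten D,
\]
and then exploit $C^*$-nuclearity of $C$ to see that the composite is the identity, forcing each arrow to be a completely isometric isomorphism. The only cosmetic difference is that you organize this as building explicit mutual inverses $\phi$ and $\psi$ and checking both compositions on elementary tensors, whereas the paper argues more tersely that the first arrow has dense range and is therefore a surjective complete isometry once the composite is known to be one.
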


\begin{proof} The canonical  morphism from
$B \maxten D$ to $(B \minten D)/ (A \minten D)$ annihilates the
closure of $A \otimes B$, which by Proposition \ref{maxqp} is equal
to $A \maxten D$. Thus by Lemma \ref{maxq} we have canonical
completely contractive  morphisms
$$ C \maxten D
\cong (B \maxten D)/(A \maxten D) \to (B \minten D)/(A \minten D)
\to C \minten D $$ which compose to the identity map since $C
\minten D \cong C \maxten D$. Since the range of the first `arrow'
is dense,  we see that $C \minten D \cong (B \minten D)/(A \minten
D)$ via the canonical map. This completes the proof.
\end{proof}

{\bf Remark.}  There are several conditions equivalent to an
extension having the `tensorizing with every $C^*$-algebra' property
in the last theorem. These are studied in \cite{Bseq}.

\medskip

\begin{corollary}\label{exten} Let \[ \xymatrix{0 \ar[r] & A
\ar[r] & B \ar[r] & C \ar[r] & 0} \] be an extension of operator
algebras in the sense of {\rm \cite{BR}}. If both $A$ and $C$ are
$C^*$-nuclear (resp.\ $\Bdb$-nuclear) then $B$ is $C^*$-nuclear
(resp.\ $\Bdb$-nuclear).
\end{corollary}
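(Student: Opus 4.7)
The plan is to fix a $C^*$-algebra $D$ (respectively $D = \Bdb$ for the parenthetical case) and show that the canonical completely contractive map $\phi : B \maxten D \to B \minten D$ is a complete isometric isomorphism, by placing it as the middle arrow of a commuting diagram of two extensions whose outer vertical maps can be identified by the nuclearity hypotheses.

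First, since $D$ is a $C^*$-algebra, Lemma \ref{maxq} produces the max-extension
$$0 \lra A \maxten D \lra B \maxten D \lra C \maxten D \lra 0.$$
Next, I want the analogous min-sequence to be an extension in the sense of \cite{BR}. In the $C^*$-nuclear case this is exactly Theorem \ref{ssh}. In the $\Bdb$-nuclear case, the proof of Theorem \ref{ssh} applies verbatim with $D = \Bdb$: that proof uses $C$'s nuclearity solely to assert $C \maxten D \cong C \minten D$, which for $D = \Bdb$ is precisely $\Bdb$-nuclearity of $C$. Hence in either case one obtains
$$0 \lra A \minten D \lra B \minten D \lra C \minten D \lra 0.$$

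By the nuclearity hypotheses on $A$ and $C$, the natural maps $A \maxten D \to A \minten D$ and $C \maxten D \to C \minten D$ are complete isometric isomorphisms, and together with $\phi$ they form a commuting diagram of the two extensions. A short diagram chase yields that $\phi$ is bijective: if $\phi(y) = 0$, then the image of $y$ in $C \minten D \cong C \maxten D$ vanishes, so $y \in A \maxten D \cong A \minten D$, where $\phi$ restricts to the identity, forcing $y = 0$; surjectivity is analogous, lifting through the outer identifications.

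The delicate step, which I expect to be the main obstacle, is upgrading this bijection to a complete isometric isomorphism. This is where the specific theory of extensions of operator algebras is needed: one invokes the five-lemma for such extensions via the subextension formalism of \cite[Section 3.6]{BR}, to conclude that the matrix norms forced by the quotient-norm descriptions in the two rows match under $\phi$. The $\Bdb$-nuclear case proceeds identically with $D = \Bdb$ throughout, and one may appeal to Lemma \ref{cbh} if one wishes to phrase the conclusion for general $B(H)$.
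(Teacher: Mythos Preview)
Your proposal is correct and follows essentially the same route as the paper: obtain the max-extension via Lemma~\ref{maxq}, the min-extension via Theorem~\ref{ssh} (or its $\Bdb$-variant), and then conclude by the five lemma for operator-algebra extensions from \cite{BR}. The paper invokes this five lemma as \cite[Lemma~3.2]{BR} rather than the subextension material of Section~3.6, and since that lemma delivers the complete isometry directly your separate bijectivity chase is superfluous.
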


\begin{proof} Let $D$ be a $C^*$-algebra.  By Lemma \ref{maxq} we have
an extension \[ \xymatrix{0 \ar[r] & A\maxten D \ar[r] & B \maxten D
\ar[r] & C \maxten D \ar[r] & 0}.
\]   By Theorem \ref{ssh} (or a variant of it in the
$\Bdb$-nuclear case) we have an extension
\[ \xymatrix{0 \ar[r] & A\minten
D \ar[r] & B \minten D \ar[r] & C \minten D   \ar[r] & 0}.
\]
Now apply the `five lemma' from \cite{BR} Lemma 3.2.  \end{proof}

\begin{corollary} \label{w16}  Let $A$ be an approximately unital
operator algebra.
Then $A$ is $C^*$-nuclear (resp.\ $\Bdb$-nuclear) if and only if
$A^1$ is $C^*$-nuclear (resp.\ $\Bdb$-nuclear) \end{corollary}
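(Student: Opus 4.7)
The plan is to exploit the short exact sequence
\[ 0 \lra A \lra A^1 \lra \Cdb \lra 0 \]
and reduce everything to the ideal/extension results already proved, namely Corollary \ref{ideal} and Corollary \ref{exten}.

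For the direction ``$A^1$ nuclear $\Rightarrow$ $A$ nuclear'' (in either the $C^*$- or $\Bdb$-sense), I would simply observe that $A$, being approximately unital by hypothesis, sits as an approximately unital ideal in $A^1$ (since $A$ is of codimension one in the unitization, with quotient $\Cdb$). Then Corollary \ref{ideal} applies verbatim and gives the conclusion.

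For the reverse direction, I would verify that the displayed short exact sequence is an extension in the sense of \cite{BR}: the inclusion $A \hookrightarrow A^1$ is a completely isometric homomorphism with $A$ approximately unital, and the augmentation character $A^1 \to \Cdb$ killing $A$ is a unital complete contraction which is a complete quotient map (this is essentially built into the construction of $A^1$ from \cite[Section 2.1]{BLM}). Since $\Cdb$ is trivially both $C^*$-nuclear and $\Bdb$-nuclear (both maximal and minimal tensor products with $\Cdb$ agree with the other factor), Corollary \ref{exten} applied to this extension immediately yields that $A^1$ inherits the corresponding nuclearity property from $A$.

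The main (very minor) obstacle is checking that the augmentation map $A^1 \to \Cdb$ really is a complete quotient map, so that the sequence qualifies as an extension in the sense of \cite{BR} and Corollaries \ref{ideal} and \ref{exten} can legitimately be invoked; but this is standard and implicit in the construction of the unitization.
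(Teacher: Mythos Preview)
Your proposal is correct and matches the paper's own proof essentially verbatim: the paper invokes exactly the same short exact sequence, uses Corollary \ref{ideal} for the direction $A^1 \Rightarrow A$, and Corollary \ref{exten} (``the previous result'') for the direction $A \Rightarrow A^1$. Your additional remarks about verifying that the sequence is an extension in the sense of \cite{BR} and that $\Cdb$ is trivially nuclear are fine elaborations of points the paper leaves implicit.
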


\begin{proof} We have the short exact sequence \[ 0 \lra A \lra A^1
\lra \Cdb \lra 0 \] with $A$ an approximately unital ideal in $A^1$.
The forward direction now follows from the previous result, and the
reverse follows from Corollary \ref{ideal}. \end{proof}

It is characteristic of the present paper that one gets much better
results by restricting the class of operator algebras:

\begin{proposition}\label{dirLL} If $A$ is a Dirichlet operator
algebra then $A$ is $C^*$-nuclear (resp.\ $\Bdb$-nuclear) if and
only if $C^*_{\rm e}(A)$ is nuclear (resp. has the LLP).  The {\rm
($\Leftarrow$)} implications are also true if $A$ is merely
$C^*$-split.
\end{proposition}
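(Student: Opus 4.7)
The plan is to argue the two implications separately; throughout, let $D$ stand for an arbitrary $C^*$-algebra in the $C^*$-nuclear case and $D = \Bdb$ in the $\Bdb$-nuclear case, so that the two cases run in parallel.

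For the $(\Leftarrow)$ direction, which only requires $C^*$-split, my plan is to chase norms on $A \otimes D$ through the chain
\[
A \maxten D \; \subset \; \cma \maxten D \; \to \; C^*_{\rm e}(A) \maxten D \; = \; C^*_{\rm e}(A) \minten D \; \supset \; A \minten D,
\]
where the first inclusion is completely isometric by Lemma \ref{28}, the arrow is the quotient $\pi \otimes \mathrm{id}_D$ induced by $\pi : \cma \twoheadrightarrow C^*_{\rm e}(A)$, the middle equality is the nuclearity (resp.\ LLP) hypothesis, and the last inclusion is completely isometric by injectivity of $\minten$. The $C^*$-split UCP map $u : C^*_{\rm e}(A) \to \cma$ tensors up to a UCP section $u \otimes \mathrm{id}_D$ of $\pi \otimes \mathrm{id}_D$; since $u$ restricts to the identity on $A$, both $u \otimes \mathrm{id}_D$ and $\pi \otimes \mathrm{id}_D$ act as the identity on $A \otimes D$, forcing the norms that $A \otimes D$ inherits from $\cma \maxten D$ and from $C^*_{\rm e}(A) \maxten D$ to coincide. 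Together with Lemma \ref{28}, the chain then identifies $\|\cdot\|_{A \maxten D} = \|\cdot\|_{A \minten D}$ on $A \otimes D$ and at every matrix level, which is the required $C^*$- (resp.\ $\Bdb$-) nuclearity of $A$.

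For the $(\Rightarrow)$ direction with $A$ Dirichlet, the $(\Leftarrow)$ chain applies (Dirichlet implies $C^*$-split by Arveson's Proposition 1.2.8 of \cite{SOC}), yielding a completely isometric embedding $A \maxten D \hookrightarrow C^*_{\rm e}(A) \maxten D$. Density of $A + A^*$ in $C^*_{\rm e}(A)$ makes $A \otimes D + A^* \otimes D$, and hence $(A \maxten D) + (A \maxten D)^*$, dense in $C^*_{\rm e}(A) \maxten D$. The key observation is then that whenever an operator algebra $B$ is completely isometrically contained in a $C^*$-cover $C$ with $B + B^*$ dense in $C$, we must have $C = C^*_{\rm e}(B)$: the operator system $S = B + B^*$ is dense in $C$, so by density-and-continuity any UCP map $S \to B(H)$ has at most one UCP extension to $C$, while existence is Arveson's extension theorem; hence $S$ has the unique extension property in $C$, forcing $C = C^*_{\rm e}(S) = C^*_{\rm e}(B)$. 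Applying this observation to both the max and min sides (the min case uses injectivity of $\minten$ and the same density argument) yields
\[
C^*_{\rm e}(A) \maxten D = C^*_{\rm e}(A \maxten D), \qquad C^*_{\rm e}(A) \minten D = C^*_{\rm e}(A \minten D).
\]
The hypothesis $A \maxten D = A \minten D$ identifies these two $C^*$-envelopes, giving $C^*_{\rm e}(A) \maxten D = C^*_{\rm e}(A) \minten D$, i.e., $C^*_{\rm e}(A)$ is nuclear (resp.\ has the LLP).

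The main obstacle is the key observation in the $(\Rightarrow)$ direction, that Dirichlet position in any $C^*$-cover collapses the cover to the $C^*$-envelope; everything else is a careful bookkeeping with Lemma \ref{28} and the maps $u$ and $\pi$.
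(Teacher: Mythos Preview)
Your $(\Leftarrow)$ argument is essentially the paper's. For $(\Rightarrow)$ the paper takes a more direct route: having established $A \maxten D \subset C^*_{\rm e}(A) \maxten D$ (as you also do, via the $C^*$-split section), it applies Arveson's Proposition 1.2.8 directly to the completely isometric identity $A \minten D \to A \maxten D$, extending it to a $*$-linear complete isometry from $\overline{(A \minten D) + (A \minten D)^*} = C^*_{\rm e}(A) \minten D$ into $\overline{(A \maxten D) + (A \maxten D)^*} = C^*_{\rm e}(A) \maxten D$; since this extension is the identity on the dense set $(A + A^*) \otimes D$, the max and min norms coincide.

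Your route is correct but slightly more abstract: you identify each of $C^*_{\rm e}(A) \maxten D$ and $C^*_{\rm e}(A) \minten D$ as the $C^*$-envelope of the same operator algebra, then use uniqueness of the envelope. Two small clean-ups. First, in your ``key observation'' Arveson's extension theorem is irrelevant: since $S = B + B^*$ is dense in $C$, both existence and uniqueness of a UCP extension $C \to B(H)$ are just continuity; the real content is that any UCP $\Phi : C \to B(H)$ agreeing with a $*$-representation $\pi$ on $B$ agrees on $B^*$ (both maps being $*$-preserving) and hence on all of $C$, which is precisely the UEP, and UEP forces $C = C^*_{\rm e}(B)$ as noted in the paper (via \cite{BL}). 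Second, to finish you should remark that the $*$-isomorphism $C^*_{\rm e}(A) \maxten D \cong C^*_{\rm e}(A) \minten D$ coming from uniqueness of the envelope fixes $A \otimes D$, hence also $A^* \otimes D$, hence is the identity on the dense subspace $(A+A^*)\otimes D$ and therefore coincides with the canonical max-to-min map; otherwise you have only an abstract isomorphism rather than equality of the two tensor norms.
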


\begin{proof}  If $B$ is a $C^*$-algebra, then
$A \maxten B \subset C^*_{\rm max}(A) \maxten B$ by Lemma \ref{28}.
If $A$ is $C^*$-split then there is a complete contractive right
inverse to the canonical map  $C^*_{\rm max}(A) \maxten B \to
C^*_{\rm e}(A) \maxten B$.  If the latter equals $C^*_{\rm e}(A)
\maxten B$, then it follows that $A \maxten B = A \minten B$.
Conversely if the latter holds,  then $A \maxten B \subset C^*_{\rm
e}(A) \maxten B$.  If $A$ is Dirichlet, then Arveson's Proposition
1.2.8 of \cite{SOC} gives a complete isometry from the closure of $A
\minten B + A^* \minten B$ which is $C^*_{\rm e}(A) \minten B$, into
the closure of $A \maxten B + A^* \maxten B$ in $C^*_{\rm e}(A)
\maxten B$, which is $C^*_{\rm e}(A) \maxten B$.
\end{proof}

\section{The homomorphism local lifting property and $\Bdb$-nuclearity}
 \label{thehllp}

In analogy with the $C^*$-algebraic theory of the LLP, one would
expect a relationship between $\Bdb$-nuclearity and lifting
properties.  At present we only see one direction of the
relationship, which will be presented in the next theorem.

\begin{definition} An operator algebra $C$ has the homomorphism
local lifting property (HLLP) if for every operator algebra $B$, and
any approximately unital ideal $A$ in $B$, and any completely
contractive homomorphism $u: C \to B/A$ and finite dimensional
subspace $E \subset C$, there is a complete contraction from $E$ to
$B$ which is a lift of $u|_E$.
\end{definition}

Our original motivation in studying the HLLP, is that it has some
connections with the topic of when $Ext$ is a group, and this will
be presented in \cite{Bseq}. For example, we show in \cite{Bseq}
that every extension in the sense of \cite{BR} of a separable
operator algebra with the HLLP, by $\Kdb$ (or by any $C^*$-algebra
with a property described there)  is `semisplit' in the sense of
\cite{Bseq}.

\begin{theorem} \label{llp} Let $C$ be an approximately unital
operator algebra. If $C$ is $\Bdb$-nuclear then $C$ has the HLLP.
\end{theorem}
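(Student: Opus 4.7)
The strategy is to adapt the classical Kirchberg argument which derives the LLP for a $C^*$-algebra $C$ from the identity $C\maxten\Bdb=C\minten\Bdb$, to the operator algebra setting. Given a completely contractive homomorphism $u:C\to B/A$ and finite-dimensional $E\subset C$, we seek a completely contractive linear map $v:E\to B$ with $q\circ v=u|_E$, where $q:B\to B/A$ denotes the quotient map. I would first embed $B$ completely isometrically in $B(H)$ for some Hilbert space $H$; by Lemma \ref{cbh} the $\Bdb$-nuclearity of $C$ then yields $C\maxten B(H)=C\minten B(H)$.

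Next, applying Lemma \ref{maxq} with the $C^*$-algebra $D=B(H)$ shows that the quotient $B\maxten B(H)\lra(B/A)\maxten B(H)$ is a complete quotient with kernel $A\maxten B(H)$. Combining these identifications, the map $u\otimes\text{id}$ factors through
\[ C\minten B(H)\,=\,C\maxten B(H)\,\lra\,(B/A)\maxten B(H)\,=\,\bigl(B\maxten B(H)\bigr)\big/\bigl(A\maxten B(H)\bigr), \]
realizing it simultaneously as coming from commuting representations (the max side) and yet being controlled by the min norm---this duality is the substantive input from $\Bdb$-nuclearity. To relate this back to the lifting question, I would use the standard isomorphism $CB(E,X)\cong E^*\minten X$ for finite-dimensional $E$, which recasts the problem as lifting $u|_E\in E^*\minten(B/A)$ through the natural map $E^*\minten B\lra E^*\minten(B/A)$ with kernel $E^*\minten A$.

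The hard part will be extracting the actual completely contractive map $v:E\to B$ from this tensor-level data. A first attempt completely isometrically embeds $E^*\hookrightarrow M_n\subset B(H)$ and lifts the image of $u|_E$ in $M_n(B/A)=M_n(B)/M_n(A)$ back to $M_n(B)$ (available since $M_n$ is nuclear), but then restricting to $E^*\minten B\subset M_n(B)$ would require completely contractive complementation of $E^*$ in $M_n$, which is not automatic. I expect this obstacle is circumvented by a Hahn--Banach duality argument: a hypothetical obstruction to the cc lift yields a functional on $E\hat\otimes(B/A)^*$ separating $u|_E$ from the image of the unit ball of $CB(E,B)$, which one then transports across the extension to $B(H)$, using the identity $C\maxten B(H)=C\minten B(H)$, into a witness of a failure of $\Bdb$-nuclearity, yielding the desired contradiction.
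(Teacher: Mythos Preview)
Your overall strategy---adapt the Kirchberg/Pisier argument, use functoriality of $\maxten$ (exploiting that $u$ is a homomorphism), and invoke Lemma~\ref{maxq}---is exactly what the paper does. However, your second paragraph points in the wrong direction: you embed $B$ into $B(H)$ and then tensor $B$, $A$, $B/A$ with $B(H)$, which does not connect to the element you need to lift. The paper (following Pisier's Theorem~16.2) instead embeds $E^*\subset B(H)$ and regards $u|_E$ as an element $s=\sum_k [a_k]\otimes\psi_k$ of $(B/A)\otimes E^*\subset (B/A)\minten B(H)$. One then pulls $s$ back through $u$ to $C\minten B(H)=C\maxten B(H)$ (this is where $\Bdb$-nuclearity enters), pushes forward by functoriality of $\maxten$ to $(B/A)\maxten B(H)$, and applies Lemma~\ref{maxq} to reach $(B\minten B(H))/(A\minten B(H))$ with norm $\le 1$.

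You also correctly isolate the genuine obstacle: one cannot simply restrict from $B(H)$ (or $M_n$) back down to $E^*$ by complementation. Your proposed Hahn--Banach workaround is too vague to assess, and the paper resolves this differently, via two concrete technical ingredients you are missing. First, an adaptation of Pisier's Lemma~2.4.8---using that any approximately unital ideal has a cai of the form $(1-x_t)$ with $\|x_t\|\to 1$---shows that the quotient norm of $\sum_k a_k\otimes\psi_k$ in $(B\minten E^*)/(A\minten E^*)$ is already $\le 1$, without any complementation of $E^*$ in $B(H)$. Second, a new lemma (Lemma~\ref{toc22}) establishes that $A\minten E^*$ is a complete $M$-ideal in $B\minten E^*$, hence proximinal; this yields an actual representative in $B\minten E^*\cong CB(E,B)$ of norm $\le 1$, which is the desired lift $v$. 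These two steps are the substance of the ``hard part'' you flagged, and your proposal does not supply them.
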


We defer the proof momentarily to prove a lemma which is of
independent interest (solving an open question about tensor products
of $M$-ideals in a special case: see the discussion before
Proposition 1.7 in \cite{AR}).

\begin{lemma} \label{toc22} Suppose that $A$ is an approximately
unital ideal in an operator algebra $B$, and that $E$ is an operator
space.  Then $A \minten E$ is a complete $M$-ideal in $B \minten E$,
and in particular is proximinal. \end{lemma}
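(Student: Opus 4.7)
My plan is to prove this in two steps, exploiting the fact that $A$ is itself already a complete $M$-ideal in $B$. First, I would invoke the known result that every approximately unital closed two-sided ideal $A$ in an operator algebra $B$ is automatically a complete $M$-ideal in $B$; this is established in \cite{BLM}, Chapter 4, building on work of Effros--Ruan and Blecher--Effros--Zarikian on one-sided $M$-ideals. Concretely, the weak-$*$ support projection $e$ of a cai of $A$ is a projection in $B^{**}$ satisfying $A^{\perp\perp} = eB^{**} = B^{**}e$, and the map $\Phi(x) = ex$ is a complete $M$-projection on $B^{**}$ with range $A^{\perp\perp}$.

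The second step is to transport this $M$-decomposition through $\minten E$. Injectivity of the minimal tensor product gives the completely isometric inclusions $A \minten E \hookrightarrow B \minten E \hookrightarrow B^{**} \minten E^{**}$. The matrix-level $M$-splitting identity characterizing $\Phi$ is preserved under the injective cross norm upon tensoring by $\mathrm{id}_{E^{**}}$, producing a complete $M$-projection on $B^{**} \minten E^{**}$ with range $A^{\perp\perp} \minten E^{**}$. Descending this to a complete $M$-projection of $(B \minten E)^{**}$ with range $(A \minten E)^{\perp\perp}$ yields the desired complete $M$-ideal property, and proximinality then follows automatically from the classical Alfsen--Effros theorem that every $M$-ideal is proximinal.

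The main technical obstacle is the bidual bookkeeping: $(B \minten E)^{**}$ strictly contains $B^{**} \minten E^{**}$ in general, so one must be careful in identifying $(A \minten E)^{\perp\perp}$ and in checking that the constructed projection really extends to the full bidual. A cleaner alternative, avoiding biduals entirely, is to verify Alfsen--Effros' intrinsic three-ball characterization directly: given $u$ in the unit ball of $B \minten E$ and $v_1, v_2, v_3$ in the unit ball of $A \minten E$, one produces the required approximant $w \in A \minten E$ with $\|u + v_i - w\| \le 1 + \varepsilon$ by multiplying in the first tensor factor by a suitable element of a cai $(e_\alpha)$ of $A$ and using finite-dimensional approximations in $E$ to reduce to the three-ball property of $A$ as an $M$-ideal in $B$.
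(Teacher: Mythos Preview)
Your first step coincides with the paper's: both start from the support projection $e \in B^{**}$ with $A^{\perp\perp} = eB^{**}$. Your second step, however, has a real gap that you yourself flag: the complete $M$-projection you build lives on $B^{**} \minten E^{**}$, which is in general a proper subspace of $(B \minten E)^{**}$, and you give no mechanism for extending it. The phrase ``descending this to a complete $M$-projection of $(B \minten E)^{**}$'' is exactly where the proof has to happen, and nothing in your outline supplies it.

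The paper fills this gap with a single idea you are missing: rather than tensoring the projection, it exploits the module structure. Since $B \minten E$ is a left operator $B$-module, its bidual $(B \minten E)^{**}$ is a left \emph{dual} operator $B^{**}$-module (by \cite[3.8.9]{BLM}), so left multiplication by $e$ is automatically defined on all of $(B \minten E)^{**}$ and is a left $M$-projection there. One then only needs to identify its range as $(A \minten E)^{\perp\perp}$, which the paper does by a short weak* argument using a net $a_t \to e$ with $a_t \in A$; the right-sided version is symmetric, and the two together give the complete $M$-ideal conclusion. This bypasses $B^{**} \minten E^{**}$ entirely and sidesteps the bidual-size problem. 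Your alternative three-ball route is plausible in outline, but as written it is only a sketch and would require substantially more work than the module argument --- in particular, the reduction via ``finite-dimensional approximations in $E$'' to the three-ball property of $A$ in $B$ is not obviously uniform enough to close the $\varepsilon$.
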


\begin{proof}  We know that $A^{\perp \perp} = e B^{**}$ for a
central projection $e \in B^{**}$. Notice that $B \minten E$ is a
left operator $B$-module (see e.g.\ the second paragraph of the
Notes for \S 3.4 in \cite{BLM}). Thus $(B \minten E)^{**}$ is a left
dual operator $B^{**}$-module, by 3.8.9 in \cite{BLM}. Hence $e \in
B^{**}$ may be regarded as a left $M$-projection on $(B \minten
E)^{**}$, and we claim that if we do so then $(A \minten E)^{\perp
\perp} = e (B \minten E)^{**}$. It is routine to see that $(A
\minten E)^{\perp \perp} \subset e (B \minten E)^{**}$. On the other
hand, if $a_t \to e$ weak* in $B^{**}$, with $a_t \in A$, then since
$a_t (b \otimes x) \in A \otimes E \subset (A \minten E)^{\perp
\perp}$ for $b \in B$ and $x \in E$, it follows by separate weak*
continuity and density that $e (B \minten E)^{**} \subset (A \minten
E)^{\perp \perp}$.   This proves the claim, and shows that $A
\minten E$ is a complete right $M$-ideal in $B \minten E$.
Similarly, it is a complete left $M$-ideal in $B \minten E$, and so
it is a complete $M$-ideal \cite{BEZ}.\end{proof}

\begin{proof} (Of Theorem \ref{llp}) \
We will adapt a proof due to Pisier of Kirchberg's result that
$\Bdb$-nuclearity implies the LLP for $C^*$-algebras. The reader
should follow along with the proof of the (iii) implies (i) in
\cite[Theorem 16.2]{Pisbook}.  We begin with $s \in (A/I) \otimes
E^*$, as in that proof.  Suppose $s = \sum_{k=1}^n \, [a_k]  \otimes
\psi_k$, for $a_k \in A, \psi_k \in E^* \subset B(H)$.  The first
change that needs to be made is that instead of appealing to (11.1)
one uses the functoriality of the  $\maxten$ tensor product of
operator algebras. (Note: this is the only place where $u$ being a
homomorphism is used.) The appeal to Exercise 11.2 is replaced by
Lemma \ref{maxq} above. One obtains $\Vert \sum_{k=1}^n \, [a_k]
\otimes \psi_k \Vert \leq 1$ in $(A/I) \maxten B(H)$.  Then $\Vert
\sum_{k=1}^n \, a_k \otimes \psi_k \Vert \leq 1$ in $(A \maxten
B(H))/(I \maxten B(H))$, and it follows easily that $\Vert
\sum_{k=1}^n \, a_k \otimes \psi_k + (I \minten B(H)) \Vert \leq 1$
in  $(A \minten B(H))/(I \minten B(H))$. The proof of Lemma 2.4.8 in
\cite{Pisbook} may be easily adapted to our case, if one uses the
known fact (see e.g.\ \cite{DP, AR} or \cite[Proposition 6.3]{BHN})
that any ideal with cai has an approximate identity of the form
$(1-x_t)$ with $\Vert x_t \Vert \to 1$.  This implies that $\Vert
\sum_{k=1}^n \, a_k \otimes \psi_k + (I \minten E^*) \Vert \leq 1$
in $(A \minten E^*)/(I \minten E^*)$. The proof is completed as in
\cite{Pisbook} by an appeal to Lemma \ref{toc22}.
\end{proof}

 We now turn to operator algebras $A$ with the {\em unique
extension property (UEP)} considered in \cite{Dunc2}: that is, $A$
is a unital-subalgebra $A$ of a $C^*$-algebra $B$, such that for
every Hilbert space $H$ and every unital $*$-homomorphism $\pi : B
\to B(H)$, there is a unique UCP map $\Psi : B \to B(H)$ extending
$\pi_{\vert A}$. Clearly, we can replace $B(H)$ in this definition,
by `every unital $C^*$-algebra'.   It follows from the proof of
\cite[Theorem 2.7]{BL} that $B = C^*_{e}(A)$. Examples of algebras
with the UEP include logmodular and Dirichlet algebra (see p.\ {\rm
159-161} in {\rm \cite{BLM}}), and some nest algebras and crossed
products \cite{Dunc2}.  As Elias Katsoulis has pointed out, it is an
easy consequence of Choi's `multiplicative domain' trick (see e.g.\
\cite[Lemma 14.2]{Pisbook}), that any unital operator algebra
generated by unitaries will have UEP.

\begin{lemma}\label{uepten} If $A$ and $B$ are operator
algebras with the UEP, then $A \minten B$ has the UEP. \end{lemma}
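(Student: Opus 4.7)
\medskip

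\noindent \textbf{Proof plan.} The plan is to exhibit $D := C^*_{\rm e}(A) \minten C^*_{\rm e}(B)$ as a $C^*$-cover of $A \minten B$ with respect to which UEP holds. The inclusion $A \minten B \subset D$ is a completely isometric unital embedding by injectivity of $\minten$, and since $A$ generates $C^*_{\rm e}(A)$ and $B$ generates $C^*_{\rm e}(B)$ as $C^*$-algebras, a routine density argument shows $A \minten B$ generates $D$ as a $C^*$-algebra. Once UEP is verified with respect to $D$, the consequence of \cite[Theorem 2.7]{BL} noted just before Lemma \ref{uepten} will automatically identify $D$ with $C^*_{\rm e}(A \minten B)$.

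Let $\pi : D \to B(H)$ be a unital $*$-homomorphism and let $\Psi : D \to B(H)$ be any UCP extension of $\pi|_{A \minten B}$. I would first show that $\Psi(x \otimes 1_B) = \pi(x \otimes 1_B)$ for every $x \in C^*_{\rm e}(A)$: both $x \mapsto \Psi(x \otimes 1_B)$ and the unital $*$-homomorphism $x \mapsto \pi(x \otimes 1_B)$ are UCP maps $C^*_{\rm e}(A) \to B(H)$ extending the same completely contractive homomorphism $a \mapsto \pi(a \otimes 1_B)$ on $A$, so UEP of $A$ forces them to coincide. Symmetrically, UEP of $B$ yields $\Psi(1_A \otimes y) = \pi(1_A \otimes y)$ for all $y \in C^*_{\rm e}(B)$.

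The finishing step is Choi's multiplicative-domain argument. Since $\Psi$ now agrees with the $*$-homomorphism $\pi$ on the unital self-adjoint subalgebras $C^*_{\rm e}(A) \otimes 1_B$ and $1_A \otimes C^*_{\rm e}(B)$ of $D$, each of these subalgebras lies in the multiplicative domain of $\Psi$. Hence for $x \in C^*_{\rm e}(A)$ and $y \in C^*_{\rm e}(B)$,
$$\Psi(x \otimes y) = \Psi\bigl((x \otimes 1_B)(1_A \otimes y)\bigr) = \pi(x \otimes 1_B)\,\pi(1_A \otimes y) = \pi(x \otimes y),$$
and by linearity and continuity $\Psi = \pi$ on all of $D$. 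I do not anticipate a serious obstacle; the only subtlety is choosing the right $C^*$-cover, but $D$ is the natural guess and the remark cited above ensures its identification with $C^*_{\rm e}(A \minten B)$ falls out of the UEP verification itself.
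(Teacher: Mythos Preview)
Your proposal is correct and follows essentially the same route as the paper's proof: reduce to the $C^*$-cover $C^*_{\rm e}(A)\minten C^*_{\rm e}(B)$, use UEP of $A$ and of $B$ to force agreement of $\Psi$ and $\pi$ on the slices $C^*_{\rm e}(A)\otimes 1$ and $1\otimes C^*_{\rm e}(B)$, and finish with Choi's multiplicative-domain argument. The only cosmetic difference is that the paper first invokes \cite[Theorem 2.10]{BR} to identify $C^*_{\rm e}(A\minten B)$ with $C^*_{\rm e}(A)\minten C^*_{\rm e}(B)$ and then verifies UEP, whereas you verify UEP relative to that tensor product and let the identification fall out of \cite[Theorem 2.7]{BL}; both orderings are fine.
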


\begin{proof} Since $A$ and $B$ are unital there are completely isometric
inclusions $A \subset A \minten B$ and $B \subset A \minten B$.  Now
let $\pi: C^*_{\rm e}(A \minten B) \to B(H)$ be a unital
$*$-homomorphism, and let $\sigma: C^*_{\rm e}(A \minten B) \to
B(H)$ be a unital completely positive map satisfying $ \pi|_{A
\minten B} = \tau|_{A \minten B}$. We know by \cite[Theorem
2.10]{BR} that $C^*_{\rm e}(A \minten B) = C^*_{\rm e}(A) \minten
C^*_{\rm e}(B)$. By the unique extension property  $
\sigma|_{C^*_{\rm e}(A)} = \pi|_{C^*_{\rm e}(A)}$.  A similar result
holds for $C^*_{\rm e}(B)$, and hence $C^*_{\rm e}(A)$ and $C^*_{\rm
e}(B)$ are contained in the multiplicative domain (see e.g.\
\cite{Pnbook} or \cite[Lemma 14.2]{Pisbook}) for $\sigma$. Since
$C^*_{\rm e}(A) $ and $C^*_{\rm e}(B)$ generate $C^*_{\rm e}(A
\minten B)$,we have that $C^*_{\rm e}(A \minten B)$ is contained in
the multiplicative domain for $\sigma$.  Thus $\sigma$ is a
$*$-homomorphism, and so $ \pi = \sigma$.
\end{proof}

\begin{lemma} \label{uepn} If $A$ is an operator algebra with
the UEP, and $N$ is a nonunital nuclear $C^*$-algebra, then $(A
\minten N)^1$ has the UEP. \end{lemma}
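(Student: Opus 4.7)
My plan is to verify UEP directly in a natural $C^*$-cover of $(A \minten N)^1$, following the multiplicative-domain strategy used for Lemma \ref{uepten} but with a bidual step added to compensate for the fact that $A$ is no longer literally a subalgebra of the tensor product. Set $B = C^*_e(A)$. Injectivity of $\minten$ gives a complete isometry $A \minten N \subset B \minten N$, which unitizes to a unital completely isometric embedding $(A \minten N)^1 \subset D := (B \minten N)^1$; since $A$ generates $B$ as a $C^*$-algebra, $(A \minten N)^1$ generates $D$ as a $C^*$-algebra, so $D$ is a $C^*$-cover. It suffices to verify UEP in this particular cover (which then forces $D = C^*_e((A \minten N)^1)$).

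So let $\pi : D \to B(H)$ be a unital $*$-homomorphism and $\sigma : D \to B(H)$ a UCP map with $\sigma = \pi$ on $(A \minten N)^1$; I must show $\sigma = \pi$. The key device is to pass to normal extensions $\bar\pi, \bar\sigma : D^{**} \to B(H)$ and work with the identity $p$ of the ideal $(B \minten N)^{**} \subset D^{**}$. Concretely, fix a cai $(e_\alpha)$ for $N$; since $1_A = 1_B$, the net $1_A \otimes e_\alpha$ is simultaneously a cai for $A \minten N$ and for $B \minten N$, and its weak*-limit in $D^{**}$ is $p$. As $1_A \otimes e_\alpha \in (A \minten N)^1$, normality gives $\bar\pi(p) = \bar\sigma(p) =: q$, a projection in $B(H)$, and by Choi's Schwarz inequality $p$ lies in the multiplicative domain of $\bar\sigma$.

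The crucial step is to apply the UEP of $A$ in the corner $qB(H)q$. For $b \in B$ the element $b \otimes p := \lim_\alpha (b \otimes e_\alpha) \in D^{**}$ is well-defined, and $b \mapsto b \otimes p$ is a unital $*$-homomorphism from $B$ into $pD^{**}p$. Composing, $\rho_1 := \bar\pi(\,\cdot\, \otimes p)$ is a unital $*$-homomorphism $B \to qB(H)q$, while $\rho_2 := \bar\sigma(\,\cdot\, \otimes p)$ is a UCP map into the same unital $C^*$-algebra. For $a \in A$, each $a \otimes e_\alpha \in A \minten N$ lies where $\pi$ and $\sigma$ agree, so by normality $\rho_1|_A = \rho_2|_A$. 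Applying the UEP of $A$ with target $qB(H)q$, I get $\rho_1 = \rho_2$ on all of $B$, i.e.\ $\bar\pi(b \otimes p) = \bar\sigma(b \otimes p)$ for every $b \in B$.

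Another Schwarz calculation then places $b \otimes p$ in the multiplicative domain of $\bar\sigma$ for each $b \in B$, and since $(b \otimes p)(1_A \otimes n) = b \otimes n$ in $D^{**}$, for $n \in N$,
\[ \bar\sigma(b \otimes n) = \bar\sigma(b \otimes p)\,\bar\sigma(1_A \otimes n) = \bar\pi(b \otimes p)\,\bar\pi(1_A \otimes n) = \bar\pi(b \otimes n), \]
using $1_A \otimes n \in A \minten N$ in the middle equality. Linearity, continuity and unitality extend this to $\sigma = \pi$ on all of $D$. The main obstacle I anticipate is the bidual bookkeeping --- justifying $p$ as a central projection in $D^{**}$ equal to the identity of $(B \minten N)^{**}$, and checking that $b \mapsto b \otimes p$ really lands in $pD^{**}p$ as a unital $*$-homomorphism; once these technicalities are settled, the application of UEP of $A$ in the corner is the clean step that powers the conclusion.
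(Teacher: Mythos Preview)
Your argument is correct, and it takes a genuinely different route from the paper's.

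The paper first invokes nuclearity of $N$ to decompose the restriction of the $*$-homomorphism $\theta$ to $N \minten C^*_{\rm e}(A)$ as $\pi \odot \rho$ for commuting $*$-representations $\pi$ of $N$ and $\rho$ of $C^*_{\rm e}(A)$; it then cuts down to the nondegenerate part, uses Arveson's extension theorem to extend the UCP map $\Psi$ from $N \minten C^*_{\rm e}(A)$ to $N^1 \minten C^*_{\rm e}(A)$, and finally applies the UEP of $A$ to the ``second-factor slice'' $T(a)=\tilde\Psi(1\otimes a)$.  By contrast, you never decompose the representation: you pass to the normal extensions $\bar\pi,\bar\sigma$ on $D^{**}$, use the canonical multiplier embedding $b\mapsto b\otimes 1_{M(N)}$ (your ``$b\otimes p$'') of $C^*_{\rm e}(A)$ into $(C^*_{\rm e}(A)\minten N)^{**}=pD^{**}p$, and apply the UEP of $A$ directly inside the corner $qB(H)q$.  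The multiplicative-domain step at the end is parallel in spirit to the paper's, but your route avoids both the decomposition $\pi\odot\rho$ and the Arveson extension.  A notable by-product is that your argument never uses the nuclearity of $N$: it goes through for any nonunital $C^*$-algebra $N$, so you are in fact proving a stronger statement than the one in the paper.  The price you pay is the bidual bookkeeping you flag at the end; all of it is routine once one identifies $b\otimes p$ with the image of $b$ under $C^*_{\rm e}(A)\to M(C^*_{\rm e}(A)\minten N)\subset (C^*_{\rm e}(A)\minten N)^{**}$ and recalls that the support projection $p$ of the ideal is central in $D^{**}$ with $pD^{**}=(C^*_{\rm e}(A)\minten N)^{**}$.
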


\begin{proof} Recall from \cite{BR} that $C^*_{\rm e}((N \minten A)^1)
= C^*_{\rm e}(N \minten A)^1 = (N \minten C^*_{\rm e}(A))^1$.  Let
$\theta : (N \minten C^*_{\rm e}(A))^1 \to B(H)$ be a unital
$*$-homomorphism, and let $\Phi : (N \minten C^*_{\rm e}(A))^1 \to
B(H)$ be a UCP map extending $\theta_{\vert (N \minten C^*_{\rm
e}(A))^1}$. We need to show that $\Phi = \theta$.

The restriction of $\theta$ to $N \minten C^*_{\rm e}(A)$ is of
the form $\pi \odot \rho$ for commuting $*$-representations $\pi : N
\to B(H)$ and $\rho : C^*_{\rm e}(A)  \to B(H)$.  If $(e_t)$ is an
increasing cai for $N$, then $\pi(e_t) \to q$ strongly for a
projection $q$ commuting with $\rho(1)$. Let $p = q \rho(1)$, and
identify $p B(H) p = B(K)$, where $K = p H$. Since $\pi(f) \rho(a) =
q \pi(f) \rho(a) \rho(1)$, we may replace $\pi$ and $\rho$ by $p
\pi(\cdot)$ and $p \rho(\cdot)$.
Then we
may regard $\pi$ and $\rho$ as being $B(K)$-valued with commuting
ranges, and now both of them are nondegenerate (for example
$\pi(e_t) \to p = I_K$ strongly).   Let $\pi^1 : N^1 \to B(K)$ be
the `unitization' of $\pi$, then $\pi^1 \odot \rho$ is a unital
$*$-homomorphism $N^1 \minten C^*_{\rm e}(A)  \to B(K)$. Let $\Psi$
be the restriction of $\Phi$ to $N \minten C^*_{\rm e}(A)$. Of
course $\Phi(f \otimes 1_A) = \theta(f \otimes 1_A)$ for $f \in N$,
so that it follows from  a well known principle concerning
completely positive maps, that $\Phi(fg \otimes a) = \theta(f
\otimes 1_A) \Phi(g \otimes a) = \pi(f) p \Phi(g \otimes a)$ if $f,
g \in N, a \in C^*_{\rm e}(A)$. Thus $\Psi$ may also be viewed as a
$B(K)$-valued map.

It is well known that we can extend $\Psi$ to a unital completely
positive map from the subspace $N \minten C^*_{\rm e}(A) + \Cdb
1_{N^1} \otimes 1_A$ of $N^1 \minten C^*_{\rm e}(A)$ to $B(K)$.  We
may then extend further to a unital completely positive map
$\tilde{\Psi} : N^1 \minten C^*_{\rm e}(A)  \to B(K)$. We claim that
for $f \in N^1$ we have $\tilde{\Psi}(f \otimes 1_A) = \pi^1(f)$.
Indeed for $f \in N$ we have $\tilde{\Psi}(f \otimes 1_A) = \Psi(f
\otimes 1_A) = \pi(f) = \pi^1(f)$. The claim  is also true for $f =
1_{N^1}$, and hence it is true in full generality. By the `well
known principle' used a few lines earlier, we have $\tilde{\Psi}(f
\otimes  a) = \pi^1(f) T(a)$ if $f \in N^1, a \in C^*_{\rm e}(A)$,
where $T(a) = \tilde{\Psi}(1 \otimes a)$. Note that $T$ is UCP.
Also if $a \in A$ then \[ \pi(e_t) T(a) = \tilde{\Psi}(e_t  \otimes
a) = \Phi(e_t  \otimes  a) = \pi(e_t) \rho(a),\] and taking a limit
shows that $T(a) = \rho(a)$.   Since $A$ has the UEP, $T = \rho$,
and so
\[ \Phi(f \otimes  a) = \Psi(f \otimes  a) = \tilde{\Psi}(f \otimes
a) = \pi^1(f) \rho(a) = \theta(f \otimes  a),\] for $f \in N, a \in
C^*_{\rm e}(A)$. It follows that $\Phi  = \theta$ as desired.
\end{proof}

\begin{proposition} \label{eex3} If $C$ is a separable unital
$\Bdb$-nuclear operator algebra with the UEP, then $C^*_{\rm e}(C)$
has the LLP.
\end{proposition}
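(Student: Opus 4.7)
My plan is to reduce the statement to showing the $C^*$-algebra equality $C^*_{\rm e}(C)\maxten\Bdb=C^*_{\rm e}(C)\minten\Bdb$, which by Kirchberg's theorem (applicable since $C^*_{\rm e}(C)$ is separable) is equivalent to $C^*_{\rm e}(C)$ having the LLP.

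The first ingredient is to pin down the $C^*$-envelope of the operator algebra $C\maxten\Bdb$. Since $C$ is $\Bdb$-nuclear we have $C\maxten\Bdb=C\minten\Bdb$ as operator algebras, and since the unital $C^*$-algebra $\Bdb$ trivially has the UEP, Lemma~\ref{uepten} tells us that $C\minten\Bdb$ has the UEP with $C^*_{\rm e}(C\minten\Bdb)=C^*_{\rm e}(C)\minten\Bdb$. Separately, I claim that $C^*_{\rm e}(C)\maxten\Bdb$ is itself a $C^*$-cover of $C\maxten\Bdb$: the natural map $f\colon C\maxten\Bdb\to C^*_{\rm e}(C)\maxten\Bdb$, composed with the canonical $*$-epimorphism $C^*_{\rm e}(C)\maxten\Bdb\twoheadrightarrow C^*_{\rm e}(C)\minten\Bdb$, agrees on the dense subspace $C\otimes\Bdb$ with the complete isometry $C\maxten\Bdb=C\minten\Bdb\hookrightarrow C^*_{\rm e}(C)\minten\Bdb$; since the composition is completely isometric and the second arrow is contractive, $f$ must already be completely isometric, and of course $C\otimes\Bdb$ generates $C^*_{\rm e}(C)\maxten\Bdb$ as a $C^*$-algebra because $C$ generates $C^*_{\rm e}(C)$.

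The heart of the argument is to show that the $C^*$-cover $C^*_{\rm e}(C)\maxten\Bdb$ of $C\maxten\Bdb$ itself realizes the UEP for $C\maxten\Bdb$; by the fact cited from \cite[Theorem~2.7]{BL} that UEP forces a $C^*$-cover to coincide with the $C^*$-envelope, one then obtains $C^*_{\rm e}(C)\maxten\Bdb=C^*_{\rm e}(C)\minten\Bdb$ as required. So let $\pi=\rho\odot\sigma\colon C^*_{\rm e}(C)\maxten\Bdb\to B(H)$ be a $*$-representation, for commuting $*$-representations $\rho$ of $C^*_{\rm e}(C)$ and $\sigma$ of $\Bdb$, and let $\Phi\colon C^*_{\rm e}(C)\maxten\Bdb\to B(H)$ be a UCP map with $\Phi(c\otimes b)=\rho(c)\sigma(b)$ for all $c\in C,b\in\Bdb$. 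Setting $b=1$, the restriction $\Phi|_{C^*_{\rm e}(C)\otimes 1}$ is a UCP extension of $\rho|_C$ from $C^*_{\rm e}(C)$ to $B(H)$, so by the UEP of $C$ applied to the $*$-representation $\rho$ we conclude $\Phi(a\otimes 1)=\rho(a)$ for all $a\in C^*_{\rm e}(C)$; and setting $c=1$ gives $\Phi(1\otimes b)=\sigma(b)$ for all $b\in\Bdb$. Hence $\Phi$ restricts to a $*$-homomorphism on each of the commuting subalgebras $C^*_{\rm e}(C)\otimes 1$ and $1\otimes\Bdb$, placing both in its multiplicative domain; it follows that $\Phi(a\otimes b)=\Phi(a\otimes 1)\Phi(1\otimes b)=\rho(a)\sigma(b)=\pi(a\otimes b)$, and so $\Phi=\pi$ by linearity and density.

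The main subtlety I anticipate is the invocation of the characterization that UEP forces a $C^*$-cover to be $C^*_{\rm e}$; beyond that, the multiplicative-domain step is routine, and the only further nuance is observing that the hypothesis of Lemma~\ref{uepten} is met because any unital $C^*$-algebra, in particular $\Bdb$, trivially has UEP.
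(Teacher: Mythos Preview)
Your argument is correct, and it takes a genuinely different route from the paper's. The paper proceeds indirectly: it invokes results from the unpublished sequel \cite{Bseq} to deduce that $Ext_u(C^*_{\rm e}(C))$ is a group, then passes to the unitized suspension $S(C)$, verifies via Lemma~\ref{uepn} and Corollary~\ref{w16} that $S(C)$ again satisfies the hypotheses, and finally appeals to Kirchberg's theorem relating LLP to $Ext$ of the suspension. Your approach is more direct: you exhibit $C^*_{\rm e}(C)\maxten\Bdb$ as a $C^*$-cover of $C\maxten\Bdb=C\minten\Bdb$, verify the UEP for this cover by the multiplicative-domain argument (exactly the idea behind Lemma~\ref{uepten}, but now for $\maxten$), conclude via \cite[Theorem~2.7]{BL} that it is the $C^*$-envelope, and compare with $C^*_{\rm e}(C)\minten\Bdb$ to obtain $C^*_{\rm e}(C)\maxten\Bdb=C^*_{\rm e}(C)\minten\Bdb$. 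This is self-contained (no dependence on \cite{Bseq} or suspension technology), and in fact your proof never uses separability: Kirchberg's equivalence of LLP with $A\maxten\Bdb=A\minten\Bdb$ holds for arbitrary $C^*$-algebras, so you have actually proved a slightly stronger statement. Two minor remarks: your invocation of Lemma~\ref{uepten} in the first paragraph is not strictly needed, since once you know $C^*_{\rm e}(C)\maxten\Bdb$ is the $C^*$-envelope, rigidity applied to the canonical quotient onto $C^*_{\rm e}(C)\minten\Bdb$ already forces that map to be injective; and in the last paragraph you might note explicitly that $\rho$ and $\sigma$ are unital (since $\pi$ is), as UEP is stated for unital representations.
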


\begin{proof} It is shown in \cite{Bseq}, using the HLLP
and in particular the fact mentioned above Theorem \ref{llp}, that
these hypotheses imply that $Ext_u(C^*_{\rm e}(A))$ is a group.
If  $S(C)$ for the `unitized suspension' as in \cite{Kir1}, then
$S(C)$ is a separable, unital algebra.  It is easy to see that it is
$\Bdb$-nuclear using Corollary \ref{w16}.  By the previous lemma we
also have that $S(C)$ has the UEP, so that by the above we deduce
that $Ext_u(C^*_{\rm e}(S(C)))$ is a group.  Since $C^*_{\rm
e}(S(C)) = S(C^*_{\rm e}(C))$ (see \cite[Corollary 2.11]{BR}), it
now follows from a result of Kirchberg \cite{Kir1} that $C^*_{\rm
e}(C)$ has the LLP.
\end{proof}

It follows that if $A$ is $C^*$-split, or is separable and has UEP,
then $C^*_{\rm e}(A)$ having LLP implies that $A$ has HLLP (see
Theorem \ref{llp} and Proposition \ref{dirLL}).

\section{Weak expectation} \label{awepewep}

We  turn to themes connected with the weak expectation
property.  For simplicity, unless stated otherwise we assume that
all algebras are unital, and that all subalgebras are
`unital-subalgebras'. We leave the nonunital case to the reader
using the usual unitization results (e.g.\ as in \cite[Section 2.1,
6.1.6, and 6.1.11]{BLM}, and the remark after the next definition).
This definition reduces to the WEP property of Lance if $A$ is
selfadjoint.

\begin{definition} We say that an operator algebra $A$ has the {\em
algebra weak expectation property} (AWEP) if $A \maxten D \subset B
\maxten D$ completely isometrically for every (possibly nonunital)
$C^*$-algebras $B,D$ and completely isometric embedding of $A$ as a
subalgebra of $B$. \end{definition}

\begin{proposition}  \label{cnimawep}
If an operator algebra is $C^*$-nuclear then it has the AWEP.
\end{proposition}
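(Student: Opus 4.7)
The plan is to show that the chain of canonical maps
\[ A \maxten D \;\longrightarrow\; A \minten D \;\subset\; B \minten D \;\longleftarrow\; B \maxten D \;\longleftarrow\; A \maxten D \]
all collapse to complete isometries, once we use $C^*$-nuclearity at the first step. Concretely, fix $C^*$-algebras $B$ and $D$ and a completely isometric embedding of $A$ as a subalgebra of $B$, and take an arbitrary $u \in M_n(A \otimes D)$.

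First, apply $C^*$-nuclearity of $A$ to the $C^*$-algebra $D$: this gives $\Vert u \Vert_{A \maxten D} = \Vert u \Vert_{A \minten D}$ completely isometrically. Next, by injectivity of the minimal tensor product for operator algebras sitting completely isometrically in $C^*$-algebras (see \cite[Chapter 6]{BLM}), the inclusion $A \hookrightarrow B$ yields $\Vert u \Vert_{A \minten D} = \Vert u \Vert_{B \minten D}$. Then, since both $B$ and $D$ are $C^*$-algebras, the minimal $C^*$-norm is dominated by the maximal one, so $\Vert u \Vert_{B \minten D} \leq \Vert u \Vert_{B \maxten D}$. Finally, functoriality of $\maxten$ applied to the completely contractive inclusion homomorphism $A \hookrightarrow B$ tensored with the identity on $D$ gives a completely contractive map $A \maxten D \to B \maxten D$, hence $\Vert u \Vert_{B \maxten D} \leq \Vert u \Vert_{A \maxten D}$.

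Chaining these four (in)equalities:
\[ \Vert u \Vert_{A \maxten D} \;=\; \Vert u \Vert_{A \minten D} \;=\; \Vert u \Vert_{B \minten D} \;\leq\; \Vert u \Vert_{B \maxten D} \;\leq\; \Vert u \Vert_{A \maxten D}, \]
so equality holds throughout, proving $A \maxten D \subset B \maxten D$ completely isometrically and establishing the AWEP.

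There is no real obstacle here; the proof is essentially a two-line diagram chase from the definition of $C^*$-nuclearity combined with injectivity of $\minten$ and functoriality of $\maxten$. The only minor bookkeeping point is the nonunital case in the definition of AWEP, which is handled by the unitization conventions of the section (and in any case the argument above nowhere uses unitality of $A$, $B$, or $D$).
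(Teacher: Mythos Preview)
Your proof is correct and follows essentially the same route as the paper's: the paper simply observes that the embedding $A \otimes D \to B \maxten D$ induces an operator-algebra tensor norm on $A \otimes D$, which by $C^*$-nuclearity must coincide with the max norm; your chain of inequalities is precisely the unpacking of that one sentence (functoriality of $\maxten$ for the upper bound, injectivity of $\minten$ and $\min \leq \max$ on $B \otimes D$ for the lower bound).
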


\begin{proof}  If $A$ is $C^*$-nuclear then for all $C^*$-algebras $D$ the
embedding $A \otimes D \to B \maxten D$ induces a tensor norm on $A
\otimes D$ which must be the maximal tensor norm.
\end{proof}

{\bf Remark.} We may assume in the last definition that $B$ is
unital, and $A$ is a unital-subalgebra of $B$.   For suppose that $A
\maxten D \subset B \maxten D$ for every unital $C^*$-algebra $B$
containing $A$ completely isometrically, and every $C^*$-algebra
$D$. If $C$ is a general $C^*$-algebra containing $A$ as a
subalgebra completely isometrically, and if $p \in C$ is the
identity for $A$, then define $B = pCp$.  The canonical projection
of $C$ onto $B$ induces canonical complete contractions
\[ A \maxten D \lra B \maxten D \lra C \maxten D \] which compose to
a complete isometry, and hence $A$ has AWEP.

\medskip

Following the $C^*$-algebraic theory (see e.g.\ \cite[Theorem
3.3]{Oz2}), we restate the
tensorial condition in terms of a `weak expectation':

\begin{proposition} \label{ozt}  If $A$ is a unital-subalgebra
of a unital $C^*$-algebra $B$, then $$ A \maxten D \subset B \maxten
D $$ completely isometrically, for every $C^*$-algebra $D$, if and
only if there is a UCP map $\varphi : B \to \cm{A}^{**}$ such that
$\varphi\vert_A$ is the `identity map' on $A$.   This is also
equivalent to: for every $C^*$-algebra $G$ containing $A$ completely
isometrically and as a unital-subalgebra, there is a UCP map
$\varphi : B \to G^{**}$ such that $\varphi\vert_A$ is the `identity
map' on $A$.
\end{proposition}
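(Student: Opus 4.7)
Denote the three conditions by (1) (the tensor statement), (2) (the UCP map into $\cm{A}^{**}$) and (3) (the analogous UCP map into $G^{**}$ for every $C^*$-algebra $G \supset A$). The plan is to establish (2) $\Leftrightarrow$ (3) directly and to reduce (1) $\Leftrightarrow$ (2) to a tensor-norm calculation together with one application of Wittstock's extension theorem. The implication (3) $\Rightarrow$ (2) is immediate on setting $G = \cm{A}$. For (2) $\Rightarrow$ (3), given a $C^*$-algebra $G$ containing $A$ as a unital-subalgebra, the universal property of $\cm{A}$ extends the inclusion $A \hookrightarrow G$ to a $*$-homomorphism $\psi : \cm{A} \to G$; its normal extension $\psi^{**} : \cm{A}^{**} \to G^{**}$ composed with the $\varphi$ of (2) yields a UCP map $B \to G^{**}$ which agrees with the identity on $A$.

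For (2) $\Rightarrow$ (1), I would apply functoriality of $\maxten$ under UCP maps of $C^*$-algebras to $\varphi$, obtaining a complete contraction $\varphi \otimes \mathrm{id}_D : B \maxten D \to \cm{A}^{**} \maxten D$ that acts as the identity embedding on $A \otimes D$. Lemma \ref{28} gives $\|u\|_{A \maxten D} = \|u\|_{\cm{A} \maxten D}$ at every matrix level. The missing link is that $\cm{A} \maxten D$ sits completely isometrically in $\cm{A}^{**} \maxten D$; this is a general $C^*$-fact obtained by extending any commuting pair of $*$-representations $\pi : \cm{A} \to B(H)$, $\rho : D \to B(H)$ to a commuting pair $(\tilde\pi, \rho)$ via the normal extension $\tilde\pi : \cm{A}^{**} \to B(H)$, the point being that $\rho(D) \subset \pi(\cm{A})'$ forces $\tilde\pi(\cm{A}^{**}) = \pi(\cm{A})'' \subset \rho(D)'$. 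Chaining these identifications with the functorial inequality $\|u\|_{B \maxten D} \leq \|u\|_{A \maxten D}$ yields the desired complete isometry.

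The main work is (1) $\Rightarrow$ (2). Take the universal $*$-representation $\pi_u : \cm{A} \to B(H)$, so that $\pi_u(\cm{A})'' \cong \cm{A}^{**}$, and set $D := \pi_u(\cm{A})' \subset B(H)$, a von Neumann algebra. The commuting completely contractive pair $(\pi_u|_A, \iota_D)$ induces a complete contraction $\mu : A \maxten D \to B(H)$ sending $a \otimes d$ to $\pi_u(a)\, d$. By (1), $A \maxten D$ sits completely isometrically in the $C^*$-algebra $B \maxten D$, so Wittstock's extension theorem yields a complete contraction $\tilde\mu : B \maxten D \to B(H)$ extending $\mu$. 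Since $\tilde\mu(1) = 1$ and $B \maxten D$ is a unital $C^*$-algebra, $\tilde\mu$ is automatically UCP. Put $\varphi(b) := \tilde\mu(b \otimes 1)$; then $\varphi|_A = \pi_u|_A$ is the canonical inclusion $A \hookrightarrow \cm{A}^{**}$. To land $\varphi$ in the right target, note that $\tilde\mu|_{1 \otimes D}$ coincides with the $*$-representation $\iota_D$, so $1 \otimes D$ lies in the multiplicative domain of $\tilde\mu$; consequently $\tilde\mu(b \otimes d) = \varphi(b)\, d = d\, \varphi(b)$ for all $b \in B$, $d \in D$, placing $\varphi(B) \subset D' = \pi_u(\cm{A})'' \cong \cm{A}^{**}$. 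The main obstacle is precisely in this direction: the Wittstock extension only produces a complete contraction, and without the combination of unitality (upgrading to UCP on a $C^*$-algebra) and the multiplicative-domain argument one cannot land $\varphi$ in the correct bicommutant.
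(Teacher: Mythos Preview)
Your proof is correct and follows essentially the same route as the paper: the implication (1) $\Rightarrow$ (2) via the choice $D = \pi_u(\cm{A})'$, Arveson/Wittstock extension, and the multiplicative-domain trick to land $\varphi$ in $D' = \cm{A}^{**}$ is exactly the paper's argument (the paper phrases it for general $G$ by taking $D = (G^{**})'$ in $B(H)$), and your (2) $\Rightarrow$ (1) via the chain $A \maxten D \hookrightarrow \cm{A} \maxten D \hookrightarrow \cm{A}^{**} \maxten D$ is also what the paper does, citing \cite[Exercise 11.6]{Pisbook} where you spell out the normal-extension argument. The only organizational difference is that the paper proves (1) $\Rightarrow$ (3) in one step for arbitrary $G$, whereas you establish (1) $\Rightarrow$ (2) and then deduce (2) $\Rightarrow$ (3) separately via $\psi^{**}$ coming from the universal property of $\cm{A}$; both are fine.
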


\begin{proof}
We follow the usual $C^*$-algebraic proof.  Suppose that the first
condition holds, and that $G^{**}$ is a von Neumann subalgebra $M
\subset B(H)$.  Let $D = M' \subset B(H)$.  We have $A \maxten D
\subset B \maxten D$, by the hypothesis. The canonical product map
$A \maxten D \to B(H): a \otimes d = \hat{a} d$ extends to a
completely contractive unital, and hence completely positive, map
$\Phi : B \maxten D \to B(H)$. Define $T : B \to B(H)$ by $T(x) =
\Phi(x \otimes 1)$. Since $\Phi(1 \otimes d) = d$ for all $d \in D$,
a well known lemma implies that $T(x) d = \Phi(x \otimes d) = d
T(x)$ for all $d \in D, x \in B$.  That is, $T : B \to D' = M =
G^{**}$. Finally, $T(a) =  \Phi(a \otimes 1) = \hat{a}$ for $a \in
A$.

The other direction is much easier, essentially just as in
\cite{Oz2}. Namely, we consider the canonical sequence \[ A \maxten
D \to B  \maxten D \to \cm{A}^{**}  \maxten D, \] and use the fact
that $A
\maxten D \subset \cm{A}^{**}  \maxten D$
(see  Lemma \ref{28} and \cite[Exercise 11.6]{Pisbook}).
 \end{proof}

{\bf Remark.}  In the last line of the previous result one may
replace $B$ by $B^{**}$.

\medskip
By the remark after Proposition \ref{cnimawep}, and by
Proposition \ref{ozt}, $A$ having the AWEP is equivalent to the
conditions in Proposition \ref{ozt} holding for every unital
$C^*$-algebra $B$ containing $A$ completely isometrically as a
unital-subalgebra. It is easy to see that this is equivalent to the
conditions in Proposition \ref{ozt} holding for $B = B(H)$, for all
Hilbert spaces $H$ and for all embeddings of $A$ in $B(H)$
completely isometrically as a unital-subalgebra.   One may replace
the words `for all'  in the last line with `for one fixed', to
obtain condition (iv) in the next theorem.  However this is shown
there to be equivalent to the AWEP.

\begin{theorem} \label{awep} For a unital operator algebra $A$,
consider the following conditions: \begin{enumerate}

\item[(i)] $\cma$ has the WEP.

\item[(ii)] $A$ has the AWEP.

\item[(iii)]  There exists an injective operator space $R \subset
\cma^{**}$
containing the canonical copy of $A$.
\item[(iii)']  For every $C^*$-cover $B$ of $A$, there exists an
injective operator space $R \subset B^{**}$ containing the
canonical copy of $A$.

\item[(iv)]  For every $C^*$-algebra $B$ containing $A$
completely isometrically as a unital subalgebra, there exists a
Hilbert space $H$ and a completely isometric unital homomorphism
$\pi: A \to B(H)$, and a UCP map $T: B(H) \to B^{**}$, such that $T
\circ \pi = I_A$.

\item[(v)]  $C^*_{\rm e}(A)$ has the WEP. \end{enumerate} Then ${\rm (i)}
\Rightarrow {\rm (ii)} \Leftrightarrow {\rm (iii)} \Leftrightarrow
{\rm (iii)'} \Leftrightarrow {\rm (iv)} \Rightarrow {\rm (v)}$.
\end{theorem}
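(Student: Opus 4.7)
The plan is to establish the implications in the order (i) $\Rightarrow$ (ii), then the cluster of equivalences (ii) $\Leftrightarrow$ (iii) $\Leftrightarrow$ (iii)' $\Leftrightarrow$ (iv), and finally (iv) $\Rightarrow$ (v), using Proposition \ref{ozt} and its ``$G$-version'' as the central characterization of AWEP via weak expectations. I will arrange the cluster along the circular route (ii) $\Rightarrow$ (iii)' $\Rightarrow$ (iii) $\Rightarrow$ (iv) $\Rightarrow$ (ii).

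For (i) $\Rightarrow$ (ii): by Proposition \ref{ozt} together with the remark following Proposition \ref{cnimawep}, it suffices to produce, for every unital $C^*$-algebra $B$ containing $A$ as a unital-subalgebra, a UCP map $\varphi: B \to \cma^{**}$ extending $A \hookrightarrow \cma^{**}$. One represents $\cma$ faithfully on some $B(H)$ that also accommodates $B$ with both embeddings of $A$ compatible (using the universal property of $\cma$ and enlarging $H$ if needed). The WEP of $\cma$ then yields a UCP map $E: B(H) \to \cma^{**}$ extending the inclusion of $\cma$, and its restriction to $B$ is the desired $\varphi$. For (ii) $\Rightarrow$ (iii)': given a $C^*$-cover $B$ of $A$, the $G$-version of Proposition \ref{ozt} supplies a UCP map $\Phi: B(H) \to B^{**}$ extending $A$ for any completely isometric embedding $A \subset B(H)$. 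Extending $\Phi$ back to an idempotent UCP projection on $B(H)$ via injectivity of $B(H)$ and composing, the Choi-Effros construction endows the range with an operator system structure that is injective and contains the canonical copy of $A$, giving (iii)'; taking $B = \cma$ yields (iii).

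For (iii) $\Rightarrow$ (iv): realize the injective $R \subset \cma^{**}$ inside some $B(H)$; injectivity (as an operator system, via the Choi-Effros product inherited from $\cma^{**}$) provides a UCP projection $B(H) \to R \subset \cma^{**}$, and composing with the canonical normal $*$-homomorphism $\cma^{**} \to B^{**}$ arising from any $C^*$-cover $B$ gives the desired $T: B(H) \to B^{**}$. The implication (iv) $\Rightarrow$ (ii) follows by taking arbitrary $G \supset A$ in place of $B$ in (iv) and invoking the observation in the remark before the theorem that a weak expectation attached to a single embedding is equivalent to having one for every embedding---thus recovering the conditions of Proposition \ref{ozt} and hence AWEP. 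Finally, for (iv) $\Rightarrow$ (v), apply (iv) with $B = C^*_{\rm e}(A)$ to obtain $\pi: A \to B(H)$ and UCP $T: B(H) \to C^*_{\rm e}(A)^{**}$. The restriction of $T$ to the $C^*$-subalgebra $C^*(\pi(A)) \subset B(H)$ is a UCP map into $C^*_{\rm e}(A)^{**}$ that agrees on $\pi(A)$ with the canonical $*$-epimorphism $C^*(\pi(A)) \twoheadrightarrow C^*_{\rm e}(A) \hookrightarrow C^*_{\rm e}(A)^{**}$; by rigidity of $C^*_{\rm e}(A)$ this restriction must equal that $*$-epimorphism, so $T$ yields a weak expectation for a faithful realization of $C^*_{\rm e}(A)$.

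The main obstacle I anticipate is the final implication (iv) $\Rightarrow$ (v): passing from a weak expectation attached to a single, uncontrolled representation $\pi$ of $A$ to a genuine weak expectation for $C^*_{\rm e}(A)$ requires using rigidity (essentially unique-extension-type behavior) of the $C^*$-envelope, together with some care in ensuring that the image of $\pi$ generates a faithful copy of $C^*_{\rm e}(A)$---which may require either replacing $\pi$ by a maximal representation or performing a dilation. A secondary obstacle is the construction of the injective $R$ in (ii) $\Rightarrow$ (iii)': the raw image of a UCP map need not be injective, and one must first extract an idempotent via Arveson extension before the Choi-Effros argument delivers the required injective structure.
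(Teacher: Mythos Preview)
Your cycle (ii)$\Rightarrow$(iii)$'$$\Rightarrow$(iii)$\Rightarrow$(iv)$\Rightarrow$(ii) is arranged differently from the paper's, and several of your steps are fine: (iii)$\Rightarrow$(iv) via the normal $*$-homomorphism $\cma^{**}\to B^{**}$ works, and (iv)$\Rightarrow$(ii) is correct once one Arveson-extends $\pi$ to any $C^*$-algebra containing $A$ and composes with $T$. But (ii)$\Rightarrow$(iii)$'$ has a real gap. Having a UCP $\Phi:B(H)\to B^{**}$ fixing $A$ does not yield an idempotent by ``injectivity of $B(H)$'' or by ``Arveson extension'': injectivity lets you extend maps \emph{into} $B(H)$, it does not force a map to become a projection, and the range of $\Phi$ has no reason to be injective. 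The missing ingredient is Hamana's minimal $A$-projection. The paper organizes this differently, going (ii)$\Rightarrow$(iv) trivially via Proposition~\ref{ozt} and then (iv)$\Rightarrow$(iii)$'$: with $B^{**}\subset B(K)$ one forms $\tilde T=T\circ\tilde\pi:B(K)\to B^{**}$ fixing $A$, takes a minimal $\hat A$-projection $\Theta$ on $B(K)$, and uses minimality to force $\|\tilde T\Theta(\cdot)\|=\|\Theta(\cdot)\|$, so that $\tilde T\circ\Theta$ is idempotent with injective range inside $B^{**}$.

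More seriously, your (iv)$\Rightarrow$(v) misapplies rigidity. Rigidity of $C^*_{\rm e}(A)$ is a statement about UCP maps \emph{out of} $C^*_{\rm e}(A)$ that fix $A$; it says nothing about a UCP map out of an arbitrary $C^*$-cover $C^*(\pi(A))$ into $C^*_{\rm e}(A)^{**}$. There is no mechanism forcing $T|_{C^*(\pi(A))}$ to agree with the canonical $*$-epimorphism $\sigma$, and even if it did, $\sigma$ need not be injective, so you would not get $C^*_{\rm e}(A)$ faithfully inside $B(H)$ with a weak expectation. Your suggested fix of replacing $\pi$ by a maximal representation is unavailable, since (iv) only asserts the \emph{existence} of some $\pi$. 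The paper avoids this entirely by deducing (v) from (iii)$'$ instead: taking $B=C^*_{\rm e}(A)$ one has the genuine copy $C^*_{\rm e}(A)\subset C^*_{\rm e}(A)^{**}\subset B(H)$ and an injective $Z$ between $A$ and $C^*_{\rm e}(A)^{**}$, so the UCP idempotent $T:B(H)\to Z$ can be restricted to $C^*_{\rm e}(A)$ itself, and \emph{now} rigidity legitimately makes $T|_{C^*_{\rm e}(A)}$ a complete isometry, from which the weak expectation for $C^*_{\rm e}(A)$ is assembled. The route to (v) should therefore pass through (iii)$'$, not (iv).
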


\begin{proof} ${\rm (i)} \Rightarrow {\rm (iii)}$ \ If ${\rm (i)}$
holds then there is an injective operator system $R$ between $\cma$
and $\cma^{**}$.

${\rm (ii)} \Rightarrow {\rm (iv)}$ \ This is a corollary of
Proposition \ref{ozt}.

${\rm (iv)} \Rightarrow {\rm (iii)'}$ \ Suppose that  $A$ is a
unital-subalgebra of a $C^*$-algebra $B$, and that $B^{**}$ is a
unital-subalgebra of $B(K)$. By injectivity of $B(H)$ we can extend
the map $\pi$ in (iv) to a completely contractive unital, hence UCP,
map $\tilde{\pi} : B(K) \to B(H)$.  If $T$ is as in (iv), let
$\tilde{T} = T \circ \tilde{\pi} : B(K) \to B^{**}$. Let $\Theta$ be
a minimal $\hat{A}$-projection on $B(K)$.  Then $\Vert
\tilde{T}(\Theta( \cdot )) \Vert$ is a $\hat{A}$-seminorm on $B(K)$
which is dominated by $\Vert \Theta( \cdot ) \Vert$, and hence the
two must coincide. By \cite[Lemma 4.2.2]{BLM} we deduce that
$\tilde{T} \circ \Theta$ is idempotent. Since the range of
$\tilde{T} \circ \Theta$ is contained in $B^{**}$ we deduce that
there is an injective operator system $Z$ with $A \subset Z \subset
B^{**}$.

 ${\rm (iii)'} \Rightarrow
{\rm (iii)}$ \ Trivial.

${\rm (iii)} \Rightarrow {\rm (ii)}$ \ If $A$ is a unital-subalgebra
of a $C^*$-algebra $B$, we can extend the inclusion $A \subset \cma$
to a UCP map $B \to R$.  Hence $A$ satisfies the hypotheses in
Proposition \ref{ozt}, which yields ${\rm (ii)}$.

${\rm (iii)'} \Rightarrow {\rm (v)}$ \ There is an injective
 $Z$  with $A \subset Z \subset C^*_{\rm e}(A)^{**}
\subset B(H)$. Let $T : B(H) \to Z$ be a UCP idempotent.
By the rigidity property of $C^*_{\rm e}(A)$, we have that $R =
T|_{C^*_{\rm e}(A)}$ is a complete isometry onto $W = T(C^*_{\rm
e}(A))$.  Then $R^{-1}: W \to C^*_{\rm e}(A)$ extends to a complete
contraction $\mu: C^*_{\rm e}(A)^{**} \to Z \subset C^*_{\rm
e}(A)^{**}$ such that $ \mu|_{A} = I_{A}$. Then $ \mu \circ T: B(H)
\to C^*_{\rm e}(A)^{**}$ with $ \mu(T(x)) = R^{-1}(T(x)) =
R^{-1}(R(x)) = x$ for all $x \in C^*_{\rm e}(A)$.  Hence $C^*_{\rm
e}(A)$ has the WEP.
\end{proof}

{\bf Remarks.} 1) \ Variants of the above proof shows that if some
$C^*$-algebra generated by $A$ has the WEP then so does $C^*_{\rm
e}(A)$; and that $C^*_{\rm e}(A)$ has the WEP iff there exists an
injective  $R \subset C^*_{\rm e}(A)^{**}$
containing the canonical copy of $A$.

2) \  As in Proposition \ref{ozt},
one may
replace $B$ in (iv) by $C^*_{\rm max}(A)$.

\begin{proposition}\label{wenu} Let $A$ and $B$ be approximately unital
operator algebras, with $A$ $\Bdb$-nuclear and $B$ having the AWEP.
We have $ A \minten B = A \maxten B$ if either $A$ or $B$ is a
$C^*$-algebra.
\end{proposition}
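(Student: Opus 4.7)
The plan is to embed $B$ completely isometrically as a unital subalgebra of some $B(H)$, exploit the $\Bdb$-nuclearity of $A$ via Lemma \ref{cbh} to conclude that $A \maxten B(H) = A \minten B(H)$, and then transfer this equality back to $A \otimes B$ using (i) the AWEP of $B$ to identify $A \maxten B$ as a completely isometric subalgebra of $A \maxten B(H)$, and (ii) injectivity of $\minten$ to identify $A \minten B$ as a subalgebra of $A \minten B(H)$. Note that in Case 1 ($B$ a $C^*$-algebra with WEP) the AWEP of $B$ is just a relabeling, since the two notions coincide for $C^*$-algebras; in Case 2 ($A$ a $C^*$-algebra, $B$ an operator algebra with AWEP) it holds by hypothesis.

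The crux will be to establish the complete isometry
\[ A \maxten B \;\subset\; A \maxten B(H). \]
In Case 2 this is immediate from AWEP of $B$ applied with the $C^*$-algebra $D = A$. In Case 1, where $A$ need not be a $C^*$-algebra, I would instead apply AWEP of $B$ with $D = \cm{A}$, obtaining $\cm{A} \maxten B \subset \cm{A} \maxten B(H)$ completely isometrically, and then combine this with the two embeddings $A \maxten B \subset \cm{A} \maxten B$ and $A \maxten B(H) \subset \cm{A} \maxten B(H)$ supplied by Lemma \ref{28}. A short diagram chase---the canonical completely contractive map $A \maxten B \to A \maxten B(H)$ followed by the latter embedding coincides with the completely isometric composition $A \maxten B \hookrightarrow \cm{A} \maxten B \hookrightarrow \cm{A} \maxten B(H)$---then forces $A \maxten B \to A \maxten B(H)$ to be a complete isometry as well.

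With this in hand, Lemma \ref{cbh} together with the $\Bdb$-nuclearity of $A$ give $A \maxten B(H) = A \minten B(H)$, while injectivity of $\minten$ gives a completely isometric inclusion $A \minten B \subset A \minten B(H)$. Chaining these identifications on any $u \in A \otimes B$ yields
\[ \|u\|_{A \maxten B} \;=\; \|u\|_{A \maxten B(H)} \;=\; \|u\|_{A \minten B(H)} \;=\; \|u\|_{A \minten B}, \]
and the proposition follows. The main obstacle is the diagram chase in Case 1; everything else is a direct assembly of the hypotheses with Lemmas \ref{28} and \ref{cbh} and the functoriality/injectivity of the two tensor norms through the ambient $B(H)$.
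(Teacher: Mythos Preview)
Your proof is correct. For the case where $B$ is a $C^*$-algebra (your Case~1), your argument coincides with the paper's: the paper writes the chain
\[
A \maxten B \subset \cma \maxten B \subset \cma \maxten B(H) = \cm{A \maxten B(H)},
\]
and your diagram chase simply makes explicit why this forces $A \maxten B \subset A \maxten B(H)$.

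For the case where $A$ is a $C^*$-algebra (your Case~2), you take a genuinely different and more direct route. The paper embeds $B$ into $C^*_{\rm e}(B)$, invokes Theorem~\ref{awep} to deduce that $C^*_{\rm e}(B)$ has the WEP, and then appeals to Kirchberg's theorem (LLP $+$ WEP $\Rightarrow$ $\min = \max$) to obtain $A \minten C^*_{\rm e}(B) = A \maxten C^*_{\rm e}(B)$. You instead embed $B$ into $B(H)$, apply the AWEP hypothesis directly with $D = A$ (legitimate precisely because $A$ is now a $C^*$-algebra) to get $A \maxten B \subset A \maxten B(H)$, and finish with Lemma~\ref{cbh}. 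This unifies both cases under a single template and sidesteps both Theorem~\ref{awep} and the external citation of Kirchberg's result, at no cost---so in this half your argument is a genuine simplification.

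One small remark: you speak of embedding $B$ as a \emph{unital} subalgebra of $B(H)$, but the proposition only assumes $B$ approximately unital. This is harmless, since the definition of AWEP allows arbitrary completely isometric embeddings into $C^*$-algebras, and nothing else in your argument uses unitality of the embedding.
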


\begin{proof} If $A$ is a $C^*$-algebra with the LLP and if $B$ has
the AWEP then $A \maxten B \subset A \maxten C^*_{\rm e}(B)$.  By
Theorem \ref{awep} we have $C^*_{\rm e}(B)$ has the WEP, and so by
the matching theorem of Kirchberg \cite[Proposition 1.1 (i)]{Kir1}
we have $A \minten C^*_{\rm e}(B) = A \maxten C^*_{\rm e}(B)$.  From
this the result is clear.

If $B \subset B(H)$ is a $C^*$-algebra with the WEP, and if $A$ is
$\Bdb$-nuclear, then using Lemma \ref{28} we have
\[ A \maxten
B \subset \cma \maxten B \subset \cma \maxten B(H) = \cm{A \maxten
B(H)}.\] Since $A \minten B(H) = A \maxten B(H)$ and $A \minten B
\subset A \minten B(H)$ we are done.
\end{proof}

\begin{proposition}\label{weps}  Suppose that $A$ is a
$C^*$-split unital operator algebra. Then $A$ has AWEP iff $C^*_{\rm
e}(A)$ has WEP. Also,  $A$ has AWEP if $C^*(F) \minten A = C^*(F)
\maxten A$ completely isometrically for every discrete free group
$F$.  The converse of the last assertion holds too, if $A$ is
Dirichlet.
\end{proposition}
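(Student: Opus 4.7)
I dispatch the three assertions in turn, relying on Theorem \ref{awep} and the $C^*$-split structure.

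For Part 1, the forward direction is Theorem \ref{awep}$(ii)\Rightarrow(v)$. For the reverse, by the remark following Proposition \ref{cnimawep} it suffices to take $A$ as a unital-subalgebra of a unital $C^*$-algebra $B$. Faithfully embed $C^*_{\rm e}(A) \subset B(H)$; WEP yields a UCP map $\Phi: B(H) \to C^*_{\rm e}(A)^{**}$ extending the inclusion. The $C^*$-split section $u: C^*_{\rm e}(A) \to \cma$ is UCP (a unital complete contraction between $C^*$-algebras), and the universal property of $\cma$ gives a $*$-homomorphism $\rho: \cma \to B$ extending $\mathrm{id}_A$. Then $T := \rho^{**} \circ u^{**} \circ \Phi: B(H) \to B^{**}$ is UCP with $T|_A = \mathrm{id}_A$; Theorem \ref{awep}$(iv)$ yields AWEP.

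A key observation for Parts 2 and 3 is that for $C^*$-split $A$ and any $C^*$-algebra $D$, the embedding $A \maxten D \hookrightarrow C^*_{\rm e}(A) \maxten D$ is completely isometric: $u \otimes \mathrm{id}: C^*_{\rm e}(A) \maxten D \to \cma \maxten D$ is a UCP right inverse to $\epsilon \otimes \mathrm{id}$ (where $\epsilon: \cma \to C^*_{\rm e}(A)$ is the quotient), hence a complete isometry, and Lemma \ref{28} identifies the image of $A \otimes D$ with $A \maxten D \subset \cma \maxten D$. For Part 3 (the Dirichlet converse): AWEP gives $C^*_{\rm e}(A)$ WEP by Part 1, then Kirchberg's theorem yields $C^*_{\rm e}(A) \maxten C^*(F) = C^*_{\rm e}(A) \minten C^*(F)$ for every discrete free group $F$ (free groups having LLP). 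The density of $A + A^*$ in $C^*_{\rm e}(A)$ combined with the Arveson-type argument in the proof of Proposition \ref{dirLL} transfers this equality to $A \otimes C^*(F)$, giving $A \maxten C^*(F) = A \minten C^*(F)$.

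For Part 2, the hypothesis with the key observation (and injectivity of $\minten$) shows that the canonical quotient $C^*_{\rm e}(A) \maxten C^*(F) \to C^*_{\rm e}(A) \minten C^*(F)$ is isometric on the subspace $A \otimes C^*(F)$ for every free $F$; by Part 1, it suffices to extend this isometric property to all of $C^*_{\rm e}(A) \otimes C^*(F)$ and invoke Kirchberg. \emph{The main obstacle is this extension step}: unlike in the Dirichlet case, the subspace $A \otimes C^*(F)$ together with its adjoints need not densely span $C^*_{\rm e}(A) \otimes C^*(F)$. My plan is to exploit the universal identification $\cma \maxten C^*(F) = C^*_{\max}(A \maxten C^*(F))$ from Lemma \ref{28}, together with the $C^*$-split UCP retraction, to transfer the max-equals-min equality through $\cma$-tensors and thereby obtain the WEP of $\cma$; Theorem \ref{awep}$(i)\Rightarrow(ii)$ then delivers AWEP.
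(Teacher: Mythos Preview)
Parts 1 and 3 are essentially correct and close to the paper. For Part 1 the paper uses condition (iii) of Theorem~\ref{awep} rather than your (iv): WEP of $C^*_{\rm e}(A)$ gives an injective $R$ with $C^*_{\rm e}(A)\subset R\subset C^*_{\rm e}(A)^{**}$, and since the $C^*$-split section $u$ is a complete isometry, $u^{**}(R)\subset\cma^{**}$ is injective and contains $A$. Your route through (iv) is equally valid. One correction in Part 3: the Arveson density argument from Proposition~\ref{dirLL} goes the \emph{other} way (from the $A$-level equality up to the $C^*_{\rm e}(A)$-level); what carries $C^*_{\rm e}(A)\maxten C^*(F)=C^*_{\rm e}(A)\minten C^*(F)$ down to $A$ is precisely your ``key observation'', and that needs only $C^*$-split.

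Your plan for Part 2 has a genuine gap. Deducing that $\cma$ has WEP from the hypothesis cannot work: take $A=A(\Ddb)$, which is Dirichlet (hence $C^*$-split) and $C^*$-nuclear (so the $C^*(F)$ condition certainly holds), yet whether $\cm{A(\Ddb)}$ has WEP is equivalent to Kirchberg's conjecture (see \S\ref{disk}). The paper's own proof here is a one-line pointer to Kirchberg's theorem and the proof of Proposition~\ref{dirLL}; but in that proof the passage from $A\maxten B=A\minten B$ to $C^*_{\rm e}(A)\maxten B=C^*_{\rm e}(A)\minten B$ is exactly the step that uses the \emph{Dirichlet} hypothesis (via Arveson's Proposition~1.2.8), while the reverse passage uses only $C^*$-split. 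Tracing through, the argument the paper gestures at actually yields ``$C^*(F)$ condition $\Rightarrow$ AWEP'' for Dirichlet $A$ and ``AWEP $\Rightarrow$ $C^*(F)$ condition'' for $C^*$-split $A$---so the hypotheses attached to the last two sentences of the statement appear to be interchanged. In any event, your proposed detour through WEP of $\cma$ must be abandoned.
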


\begin{proof}    If
$C^*_{\rm e}(A)$ has WEP then there is an injective between
$C^*_{\rm e}(A)$ and its second dual. If also $A$ is $C^*$-split
then there is an injective between $A$ and $\cma^{**}$.    The rest
of the first `iff'  follows from Theorem \ref{awep}.   The rest
follows from Kirchberg's matching result for $C^*$-algebras
\cite{Kir1} and the proof of Proposition \ref{dirLL}.
\end{proof}

{\bf Remark.}  We do not know if either direction of the last
assertion of the proposition is true for general operator algebras.
It is easy however easy to see that like the AWEP, the condition
involving $C^*(F)$ holds if $\cma$ has the WEP.  See also
Proposition \ref{var} for another result concerning this property.

\section{Some connections with exactness}

We recall that a $C^*$-algebra is  nuclear iff it is both exact and
has the WEP.   The reader familiar with Kirchberg's work on
exactness of $C^*$-algebras (see e.g.\ \cite{Kir1, Kir2}),  will
expect that we need to consider the following notion of exactness
for nonselfadjoint operator algebras. Fortunately, this coincides
with the usual operator space variant of exactness studied
 by Pisier \cite{Pis}, as we shall soon see.

\begin{definition} We say that an operator
algebra $D$ is {\em OA-exact} if for every extension  \[ 0 \lra A
\lra B \lra C \lra 0 \] in the sense of \cite{BR}, the induced
sequence
\[ 0 \lra A \minten D \lra B \minten D \lra C \minten D \lra 0 \] is
an extension.
\end{definition}

\begin{proposition} \label{next} For a subalgebra $D \subset B(H)$.
The following are equivalent:

\begin{enumerate} \item[(i)] $D$ is OA-exact.

\item[(ii)] The induced sequence \[ 0 \lra \Kdb \minten D \lra
\Bdb \minten D \lra (\Bdb/\Kdb) \minten D \lra 0 \] is an extension
in the sense of \cite{BR}.

\item[(iii)] $D$ is exact as an operator space.

\item[(iv)] If $u: D \to B(H)$ is the inclusion map, then
for every approximately unital operator algebra $A$ the map $I_A
\otimes u$ extends to a contraction from $A \minten D$ to $A \maxten
B(H)$.
\end{enumerate} \end{proposition}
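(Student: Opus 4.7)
I would establish the four conditions as a cycle $\mathrm{(i)} \Rightarrow \mathrm{(ii)} \Rightarrow \mathrm{(iii)} \Rightarrow \mathrm{(iv)} \Rightarrow \mathrm{(i)}$. The implication $\mathrm{(i)} \Rightarrow \mathrm{(ii)}$ is immediate, since the Calkin sequence $0 \to \Kdb \to \Bdb \to \Bdb/\Kdb \to 0$ is a $C^*$-algebraic short exact sequence and in particular an extension in the sense of \cite{BR}. For $\mathrm{(ii)} \Leftrightarrow \mathrm{(iii)}$ and $\mathrm{(iii)} \Leftrightarrow \mathrm{(iv)}$, I would invoke Pisier's characterizations of exactness for operator spaces: operator space exactness of $D$ is equivalent to exactness of the Calkin extension after tensoring with $\minten D$, and equivalent to the factorization condition that $I_A \otimes u: A \minten D \to A \maxten B(H)$ is contractive for every $C^*$-algebra $A$. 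To promote the latter condition from $C^*$-algebras to approximately unital operator algebras $A$ (as stated in (iv)), I would use Lemma \ref{28} to embed $A \subset \cma$ completely isometrically, together with the injectivity of $\minten$, so that the contraction for $\cma$ restricts to one for $A$.

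The main new content is the implication $\mathrm{(iv)} \Rightarrow \mathrm{(i)}$. Given an extension $0 \to A \to B \to C \to 0$ in the sense of \cite{BR}, I would apply Lemma 2.7 of \cite{BR} to obtain the associated $C^*$-extension $0 \to \cma \to \cm{B} \to \cm{C} \to 0$, then invoke operator space exactness of $D$ (which holds by $\mathrm{(iii)}$) to conclude that
$$0 \lra \cma \minten D \lra \cm{B} \minten D \lra \cm{C} \minten D \lra 0$$
is a $C^*$-extension. By Lemma \ref{28} combined with injectivity of $\minten$, each term of the operator algebra sequence embeds completely isometrically in the corresponding term of the $C^*$-sequence. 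I would then mimic the subextension argument in the proof of Lemma \ref{maxq}: using a cai $(e_t)$ for $A$, if $u \in (\cma \minten D) \cap (B \minten D)$ inside $\cm{B} \minten D$, then $(e_t \otimes 1)u$ belongs to the closure of $A \otimes D$ and converges in norm to $u$, showing that the intersection equals $A \minten D$. An appeal to Proposition 3.6 of \cite{BR} then promotes this to a subextension, giving $\mathrm{(i)}$.

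The main obstacle I anticipate is the intersection argument: one must verify carefully that left multiplication by $e_t \otimes 1$ converges to the identity in the min norm on $B \minten D$, and that its range lies in $A \otimes D$. This is routine since $A$ is an approximately unital ideal in $B$ and the action on the $D$-factor is trivial, but the argument must be performed inside $\cm{B} \minten D$ rather than $B \minten D$. A second subtlety is identifying the quotient norm on $(B \minten D)/(A \minten D)$ with the min norm on $C \minten D$: one direction is formal by functoriality of $\minten$, and the other follows from the subextension and the known $C^*$-level identification $\cm{C} \minten D \cong (\cm{B} \minten D)/(\cma \minten D)$.
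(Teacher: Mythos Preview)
Your proposal is correct and follows essentially the same route as the paper: the equivalences among (ii), (iii), (iv) via Pisier's and Effros--Ruan's characterizations together with Lemma~\ref{28}, and the passage back to (i) via \cite[Lemma~2.7]{BR}, operator-space exactness at the $C^*$-level, and the subextension/intersection argument from \cite[Proposition~3.6]{BR} as in Lemma~\ref{maxq}. The only cosmetic difference is that you frame things as a cycle while the paper groups the implications pairwise; note that your step (iv)~$\Rightarrow$~(i) explicitly invokes (iii), so it is really the paper's direct argument (iii)~$\Rightarrow$~(i).
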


\begin{proof} By \cite[Theorem 14.4.1]{Pisbook} any OA-exact
operator algebra is exact as an operator space.  Conversely, if $D$
is exact as an operator space, and if \[ 0 \lra A \lra B \lra C \lra
0
\] is an extension in the sense of \cite{BR}, then we have the
 induced sequence \[ 0 \lra \cma
\lra \cm{B} \lra \cm{C} \lra 0 \] is an extension of $C^*$-algebras
by \cite[Lemma 2.7]{BR}.
By \cite[Theorem 14.4.1(iv)]{ER} we have the extension
\[ 0 \lra \cma \minten D \lra \cm{B} \minten D \lra \cm{C} \minten D
\lra 0 \] and we can apply \cite[Proposition 3.6]{BR} and the idea
in the proof of Lemma \ref{maxq} to see that $D$ is OA-exact.

That ${\rm (ii)}$ and ${\rm (iii)}$ are equivalent follows from
e.g.\ \cite[Theorem 14.4.2]{ER}.  Further \cite[Theorem
17.1]{Pisbook} gives ${\rm (iv)} \Rightarrow {\rm (iii)}$. Finally,
assuming  (iii), recall from Lemma \ref{28} that $ A \maxten B(H)
\subset \cma \maxten B(H)$ completely isometrically.  Then applying
\cite[Theorem 17.1]{Pisbook}, the map $I_{\cma} \otimes u$ extends
to a contraction
\[ \cma \minten D \to \cma \maxten B(H). \] Restricting this map to $A
\minten D$ we obtain ${\rm (iv)}$.
\end{proof}

Any exact operator space is a subspace of an exact unital operator
algebra.  To see this let $E$ be an exact operator space and let
$\mathcal{U}(E)$ be the universal algebra for $E$ as in
2.2.10-2.2.11 of \cite{BLM}. Using the well known characterization
of exactness in terms of subspaces of $M_n$, we see that
$\mathcal{U}(E)$ is exact if $E$ is exact as an operator space by
using the following variant of \cite[Proposition 2.2.11]{BLM}. In
our case we apply the next result to both $T$ and $T^{-1}$ to see
that any finite dimensional subspace of $\mathcal{U}(E)$ can be
embedded as a subspace of $M_{2n}$.

\begin{proposition} \label{p} If $T: E \to F$ is a linear map between
operator spaces with $ \Vert T \Vert_{\rm cb} \geq 1$, then the
induced unital map $ \theta_T: \mathcal{U}(E) \to \mathcal{U}(F)$
satisfies $ \Vert \Theta_T \Vert_{\rm cb} = \Vert T \Vert_{\rm cb}$.
\end{proposition}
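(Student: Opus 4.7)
The plan is to prove the two inequalities separately. The lower bound $\|\theta_T\|_{\rm cb} \geq \|T\|_{\rm cb}$ is immediate: by the construction of the universal algebra in \cite[2.2.10]{BLM}, the canonical inclusion $E \hookrightarrow \mathcal{U}(E)$ is completely isometric, and similarly for $F \hookrightarrow \mathcal{U}(F)$; since $\theta_T$ restricted to $E$ coincides with $T$ followed by the inclusion into $\mathcal{U}(F)$, we obtain $\|\theta_T\|_{\rm cb} \geq \|\theta_T|_E\|_{\rm cb} = \|T\|_{\rm cb}$.

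For the upper bound, I would set $c = \|T\|_{\rm cb} \geq 1$ and pick a completely isometric unital embedding $\iota: \mathcal{U}(F) \hookrightarrow B(K)$ for some Hilbert space $K$. The composition $\iota \circ T : E \to B(K)$ is then a cb linear map of cb-norm exactly $c$. The strategy is to produce a unital algebra homomorphism $\Phi: \mathcal{U}(E) \to B(K)$ extending $\iota \circ T$ with $\|\Phi\|_{\rm cb} \leq c$; by the uniqueness of unital algebra homomorphism extensions of a linear map, $\Phi$ must coincide with $\iota \circ \theta_T$, and since $\iota$ is completely isometric, $\|\theta_T\|_{\rm cb} = \|\Phi\|_{\rm cb} \leq c$.

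Constructing $\Phi$ is a scaled version of Proposition 2.2.11 of \cite{BLM}. The complete contraction $T/c : E \to B(K)$ does induce a UCC unital homomorphism $\widehat{T/c}: \mathcal{U}(E) \to B(K)$ by the cited proposition, but $\widehat{T/c}$ is not equal to $\Phi/c$ because $\Phi$ is unital; rather, the two are related through the algebraic grading on the dense subalgebra $\mathcal{T}(E) \subset \mathcal{U}(E)$, with $\Phi$ the unique unital algebra homomorphism extending $T$ and $\widehat{T/c}$ the unique one extending $T/c$. I would carry out the argument by returning to the direct description of the matrix norms on $\mathcal{U}(E)$ as the supremum of $\|\hat{\psi}(\cdot)\|$ over cc representations $\psi$; replacing the class of cc representations with the class of cb representations of cb-norm at most $c$, and invoking the complete-contraction case of the universal property for the rescaled map $\psi/c$, should give the desired linear-in-$c$ norm bound and produce $\Phi$.

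The main obstacle is precisely this last step: at first sight, the passage from $\widehat{T/c}$ to $\Phi$ via the grading looks as if it should amplify the norm by $c^n$ on the $n$-th homogeneous piece $E^{\otimes n}$, so one must use the global structure of the universal norm on $\mathcal{U}(E)$ in an essential way to see that only a factor of $c$ appears. Once that bound is in hand, the uniqueness of unital homomorphism extensions combined with the completely isometric embedding $\iota$ finishes the proof.
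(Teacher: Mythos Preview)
Your lower bound is fine, and your instinct to rescale $T$ to the complete contraction $T/c$ is exactly the paper's move. The gap is a misreading of the object $\mathcal{U}(E)$. In this paper (and in \cite[2.2.10--2.2.11]{BLM}) $\mathcal{U}(E)$ is \emph{not} a tensor algebra: it is the $2\times 2$ upper-triangular algebra
\[
\mathcal{U}(E)=\left\{\begin{pmatrix}\lambda & x\\ 0 & \mu\end{pmatrix}:\ \lambda,\mu\in\Cdb,\ x\in E\right\},
\]
so the copy of $E$ is nilpotent of order $2$ and there are no higher homogeneous pieces $E^{\otimes n}$. The ``$c^n$ amplification'' obstacle you worry about simply does not arise.

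Once this is clear, the passage from $\theta_{T/c}$ to $\theta_T$ is a one-line matrix identity, which is exactly what the paper does: with $M=\Vert T\Vert_{\rm cb}$ and $A=\mathrm{diag}(M,1)$, one checks on generators that $\theta_T = A\,\theta_{T/c}\,A^{-1}$ as maps into $\mathcal{U}(F)\subset M_2(B(H))$. Since $\theta_{T/c}$ is completely contractive by \cite[Proposition~2.2.11]{BLM}, and $\Vert A\Vert=M$, $\Vert A^{-1}\Vert=1$ (here $M\geq 1$ is used), this gives $\Vert\theta_T\Vert_{\rm cb}\leq M$. Your elaborate detour through an embedding $\iota$, a map $\Phi$, and the ``global structure of the universal norm'' is unnecessary.
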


\begin{proof} Let $M = \Vert T \Vert_{\rm cb}$.  Then $u =
T/M$ is completely contractive, so that the map $ \theta_u$ in
\cite[Proposition 2.2.11]{BLM} is completely contractive.  But
$\theta_T = A \theta_u A^{-1}$ where $A$ is the diagonal scalar
matrix with entries $M$ and $1$.  From this it is clear that $ \Vert
\theta_T \Vert_{\rm cb} \leq M$. \end{proof}

We will say more about $\mathcal{U}(E)$ in the final subsection of
our paper.

We now consider a stronger property than exactness.  We say that an
operator algebra $A$ is {\em subexact} if it is a subalgebra of an
exact $C^*$-algebra.   We show in Section \ref{uofx} that an exact
operator algebra need not be subexact.   The following is obvious:

\begin{proposition} \label{p2}  $A$ is subexact
 if and only if $C^*_{\rm e}(A)$ is exact.
\end{proposition}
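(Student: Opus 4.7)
The plan is to prove both directions using standard stability properties of exactness for $C^*$-algebras.

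For the $(\Leftarrow)$ direction, suppose $C^*_{\rm e}(A)$ is exact. Then since $A$ embeds completely isometrically as a subalgebra of its $C^*$-envelope, and $C^*_{\rm e}(A)$ is itself an exact $C^*$-algebra, $A$ is subexact by definition. This is essentially immediate.

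For the $(\Rightarrow)$ direction, suppose $A$ is a subalgebra of some exact $C^*$-algebra $E$. First, the $C^*$-subalgebra $C$ of $E$ generated by $A$ is exact, since exactness passes to $C^*$-subalgebras (this is the elementary half of Kirchberg's theory; a $C^*$-subalgebra of an exact $C^*$-algebra is exact because exactness, as an operator space property, passes to subspaces, or directly from the tensorial characterization). This $C$ is a $C^*$-cover of $A$. Now by the universal property of the $C^*$-envelope, there is a surjective $*$-homomorphism $C \to C^*_{\rm e}(A)$ extending the identity on $A$. Since exactness of $C^*$-algebras is preserved by quotients (this is Kirchberg's deep theorem), it follows that $C^*_{\rm e}(A)$ is exact.

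The main subtlety, such as it is, lies in invoking the preservation of exactness under $C^*$-algebra quotients (Kirchberg's theorem). The passage to subalgebras and the universal property of $C^*_{\rm e}(A)$ are routine, so the proof should be very short and essentially just an appeal to the standard $C^*$-algebraic facts together with the defining properties of $C^*_{\rm e}(A)$.
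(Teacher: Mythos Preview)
Your proof is correct and is exactly the argument the paper has in mind: the authors label the result ``obvious'' and give no explicit proof, but elsewhere in the paper (e.g.\ in the proof of Theorem \ref{ewepcnuc} and Proposition \ref{nog}) they invoke precisely this reasoning---pass to the $C^*$-subalgebra generated by $A$, note it is a $C^*$-cover, and then use that exactness is inherited by $C^*$-quotients to conclude $C^*_{\rm e}(A)$ is exact.
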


The  next two results
suggest that $C^*$-nuclearity
is not as strong a condition as might at first appear if one views
it from a `commutant lifting theorem' perspective, see
\cite[Proposition 2.5]{PaP}.

\begin{theorem} \label{ewepcnuc} A unital operator algebra $A$ is
$C^*$-nuclear iff $A$ is exact and has the AWEP.

 Also,  $A$ is
both subexact and has the AWEP, iff  both $A$ is $C^*$-nuclear and
$C^*_{\rm e}(A)$ is nuclear.
\end{theorem}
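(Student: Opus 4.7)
The first equivalence is the nonselfadjoint analog of the classical $C^*$-algebraic theorem ``nuclear $=$ exact $+$ WEP'', and the plan is to mimic the $C^*$-argument with tools already assembled in the paper. The forward direction is immediate: $C^*$-nuclearity gives exactness by Lemma \ref{free} and the AWEP by Proposition \ref{cnimawep}.

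For the reverse direction, fix a $C^*$-algebra $B$ and realize $A$ as a completely isometric unital subalgebra of some $B(H)$. The AWEP supplies a complete isometry $A \maxten B \subset B(H) \maxten B$. Since $A$ is exact, Proposition \ref{next}(iv) produces a complete contraction $B \minten A \to B \maxten B(H)$ extending the algebraic identity on $B \otimes A$. Combining these via the symmetry of $\maxten$, for every $u \in A \otimes B$ we obtain
\[
\|u\|_{A \maxten B} \;=\; \|u\|_{B(H) \maxten B} \;\leq\; \|u\|_{A \minten B},
\]
while the reverse inequality is automatic. The same argument at each matrix level yields the completely isometric equality $A \maxten B = A \minten B$, so $A$ is $C^*$-nuclear.

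The second equivalence then follows quickly from the first together with earlier results. For the ``if'' direction, $C^*$-nuclearity of $A$ gives the AWEP by Proposition \ref{cnimawep}, while nuclearity of $C^*_{\rm e}(A)$ gives in particular exactness of $C^*_{\rm e}(A)$, and hence subexactness of $A$ by Proposition \ref{p2}. For the ``only if'' direction, assume $A$ is subexact with the AWEP. Proposition \ref{p2} makes $C^*_{\rm e}(A)$ exact, and the implication ${\rm (ii)} \Rightarrow {\rm (v)}$ of Theorem \ref{awep} gives $C^*_{\rm e}(A)$ the WEP; the classical $C^*$-algebraic characterization then forces $C^*_{\rm e}(A)$ to be nuclear. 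Moreover, $A$ sits completely isometrically in the exact $C^*$-algebra $C^*_{\rm e}(A)$, so $A$ is itself exact (as an operator space, equivalently as an operator algebra by Proposition \ref{next}), and the first equivalence of the theorem then yields that $A$ is $C^*$-nuclear.

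No single step is genuinely difficult; the one technical point worth attention is the dovetailing in the second paragraph between the AWEP inclusion $A \maxten B \subset B(H) \maxten B$ and the exactness contraction $A \minten B \to B(H) \maxten B$, which together pinch the min and max tensor norms on $A \otimes B$ and collapse them.
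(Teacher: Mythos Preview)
Your proof is correct and follows essentially the same strategy as the paper's: combine the AWEP inclusion $A \maxten D \subset B(H) \maxten D$ with the exactness contraction from Proposition \ref{next}(iv) to squeeze the two tensor norms together, and then derive the second equivalence from the first via Theorem \ref{awep} and Proposition \ref{p2}. The only difference is cosmetic: you invoke the AWEP definition directly to get the inclusion into $B(H) \maxten D$, whereas the paper routes that step through $\cma^{**} \maxten D$ using Theorem \ref{awep}(iv), but the endpoint and the rest of the argument are the same.
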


\begin{proof} We know from Lemma \ref{free} that $C^*$-nuclearity implies
exactness, and from Proposition \ref{cnimawep} we know that
$C^*$-nuclearity implies the AWEP.  Conversely, suppose $A$ is exact
and has the AWEP.  For any $C^*$-algebra $D$ we have by
\cite[(6.3)]{BLM} that $A \maxten D \subset \cma \maxten D$. Indeed
by an argument similar to that of \cite[(6.3)]{BLM},  using the
universal property of $\maxten$ and \cite[Corollary 2.5.6]{BLM}, we
have \[ A \maxten D \subset \cma \maxten D \subset \cma^{**} \maxten
D \] completely isometrically. On the other hand, the composition of
the maps in the last string agrees with the composition of the
following canonical maps (induced by the maps in Theorem \ref{awep}
(iv) with $B = \cma$):
\[ A \maxten D \to B(H) \maxten D \to \cma^{**} \maxten D. \] This
forces $A \maxten D \subset B(H) \maxten D$ completely
isometrically.  By part ${\rm (iv)}$ of Proposition \ref{next}, the
exactness of $A$ gives $A \minten D \subset B(H) \maxten D$.  It is
now easy to see that $A \maxten D = A \minten D$.  That is, $A$ is
$C^*$-nuclear.

For the second equivalence notice that if $A$ is a subalgebra of a
nuclear $C^*$-algebra $N$, then the $C^*$-algebra generated by $A$
in $N$ is exact and hence so is its quotient $C^*_{\rm e}(A)$.  If
in addition $A$ has the AWEP, then by Theorem \ref{awep} we have
that $C^*_{\rm e}(A)$ has the WEP.  Hence $C^*_{\rm e}(A)$ is
nuclear by Exercise 17.1 of \cite{Pisbook}. Since $C^*_{\rm e}(A)$
is nuclear we know that $ C^*_{\rm e}(A) \minten D = C^*_{\rm e}(A)
\maxten D$ for all $C^*$-algebras $D$.  Hence $ A \minten D \subset
C^*_{\rm e}(A) \maxten D$ completely isometrically for all
$C^*$-algebras $D$. This, by part ${\rm (iv)}$ of Proposition
\ref{next}, forces $A$ to be exact. Thus by the first chain of
equivalences we have that $A$ is $C^*$-nuclear. Finally, if
$C^*_{\rm e}(A)$ is nuclear, then $A$ is clearly subexact;
and
if $A$ is $C^*$-nuclear then $A$ has the AWEP.
\end{proof}

{\bf Remark.}  By \cite[Theorem 12.6]{Pisbook}, $A$ is $C^*$-nuclear
if and only if there is a net of finite rank contractions $v_t: A
\to M_{n_t}$ and maps $ w_t: M_{n_t} \to \cma$ with $\Vert w_t
\Vert_{{\rm dec}} \leq 1$ for all $t$, such that $w_tv_t$ converges
pointwise to the natural inclusion map of $A$ into $\cma$. This is
because  $A$ is $C^*$-nuclear if and only if the canonical map $A
\otimes D \to \cma \maxten D$ is a complete isometry with respect to
$\minten$ for every $C^*$-algebra $D$.

\medskip

The following result is a variant of the last theorem.

 \begin{proposition} \label{var}  If $A$ is an
exact approximately unital operator algebra then $A$ is
$C^*$-nuclear if and only if $C^*(F) \minten A = C^*(F) \maxten A$
completely isometrically for every discrete free group $F$.
\end{proposition}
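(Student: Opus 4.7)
The forward direction is immediate: if $A$ is $C^*$-nuclear then by definition $A \maxten B = A \minten B$ for every $C^*$-algebra $B$, in particular for $B = C^*(F)$.

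For the reverse, I plan to show $A \maxten D = A \minten D$ directly for an arbitrary $C^*$-algebra $D$, avoiding any detour through the AWEP or Kirchberg's matching theorem. Given $D$, choose a discrete free group $F$ on enough generators that there is a surjective $*$-homomorphism $\pi : C^*(F) \to D$ (for instance by sending a free basis to a $*$-generating subset of $D$), and let $J = \ker \pi$. This gives a short exact sequence of $C^*$-algebras, which is an extension in the sense of \cite{BR}. Tensoring on the right by $A$, Lemma~\ref{maxq} (in its second form, since $C^*(F)$ is a $C^*$-algebra and $A$ is an approximately unital operator algebra) produces the exact sequence
\[ 0 \lra J \maxten A \lra C^*(F) \maxten A \lra D \maxten A \lra 0, \]
while the exactness of $A$ together with Proposition~\ref{next} yields the parallel sequence
\[ 0 \lra J \minten A \lra C^*(F) \minten A \lra D \minten A \lra 0. \]

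Now I invoke the hypothesis, which identifies $C^*(F) \maxten A = C^*(F) \minten A$ completely isometrically. Inside this common operator algebra, Proposition~\ref{maxqp} embeds $J \maxten A$ as a completely isometric subspace, and injectivity of $\minten$ embeds $J \minten A$ as a completely isometric subspace. Both embeddings realize their domain as the closure of $J \otimes A$ in the ambient space, the first with respect to the max-norm on $C^*(F) \otimes A$ and the second with respect to the min-norm. Since these two norms agree on $C^*(F) \otimes A$ by hypothesis, they agree in particular on the subspace $J \otimes A$, so the two closures must coincide. Hence $J \maxten A = J \minten A$ as closed subspaces of $C^*(F) \maxten A$, and quotienting by this common subspace delivers $D \maxten A \cong D \minten A$ completely isometrically.

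The main subtle step is identifying $J \maxten A$ and $J \minten A$ as literally the same closed subspace of $C^*(F) \maxten A = C^*(F) \minten A$, rather than just two abstractly isomorphic ones. This hinges on carefully distinguishing the abstract tensor products from their embeddings produced by Proposition~\ref{maxqp} and by injectivity of $\minten$, and then using the hypothesis to collapse the two candidate norms on $J \otimes A$ inherited from the ambient space. Everything else is routine given the machinery already established, and exactness enters only through providing the min-sided short exact sequence.
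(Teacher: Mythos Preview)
Your proof is correct and follows essentially the same route as the paper: write an arbitrary $C^*$-algebra $D$ as a quotient $C^*(F)/J$, use Lemma~\ref{maxq} for the max-tensored exact sequence and exactness of $A$ for the min-tensored one, then invoke the hypothesis on the middle term to conclude. The paper's argument is terser, simply asserting that ``it follows that $A \minten B = A \maxten B$,'' whereas you carefully justify why $J \maxten A$ and $J \minten A$ coincide as closed subspaces of the identified middle term; this extra care is welcome but not a different strategy.
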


\begin{proof}If $A$ is exact then the fact that any $C^*$-algebra
$B$ is a quotient of $C^*(F)$ for some $F$, forces exactness of the
sequence
\[ 0 \lra A \minten J \lra A \minten C^*(F) \lra A \minten B \lra
0.\] Applying Lemma \ref{maxq}, we have the exact sequence
\[ 0 \lra A \maxten J \lra A \maxten C^*(F) \lra A \maxten B \lra
0.\]  If $C^*(F) \minten A = C^*(F) \maxten A$ it follows that $ A
\minten B = A \maxten B$.
\end{proof}

\begin{corollary}\label{to}
  If $A$ is $C^*$-nuclear
and approximately unital, and if either $A$ is subexact or
$A$ is generated by unitaries, then $C^*_{\rm e}(A)$ is nuclear.
 \end{corollary}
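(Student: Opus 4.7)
The proof divides along the two sufficient conditions on $A$.

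For the case when $A$ is subexact, the argument reduces to a direct application of Theorem \ref{ewepcnuc}. Since $A$ is $C^*$-nuclear, Proposition \ref{cnimawep} supplies the AWEP. Then the second equivalence of Theorem \ref{ewepcnuc}, namely that subexactness plus AWEP is equivalent to $C^*$-nuclearity plus $C^*_e(A)$ nuclear, yields that $C^*_e(A)$ is nuclear. No further work is needed in this case.

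For the case when $A$ is generated by unitaries, the remark preceding Lemma \ref{uepten} (Choi's multiplicative domain trick) gives that $A$ possesses the UEP. As before, $C^*$-nuclearity delivers the AWEP by Proposition \ref{cnimawep}, and by Theorem \ref{awep} this forces $C^*_e(A)$ to have the WEP. Since a $C^*$-algebra is nuclear precisely when it is exact and has the WEP, all that remains in this case is to verify exactness of $C^*_e(A)$.

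Producing that exactness from the hypotheses in hand is the main obstacle. My plan is to exploit the CPAP-style characterization of $C^*$-nuclearity recorded in the remark following Theorem \ref{ewepcnuc}: there are completely contractive $v_t : A \to M_{n_t}$ and $w_t : M_{n_t} \to \cma$ with $\|w_t\|_{\mathrm{dec}} \leq 1$ and $w_t v_t$ converging pointwise to the inclusion $A \hookrightarrow \cma$. Composing each $w_t$ with the canonical quotient $\cma \to C^*_e(A)$, and (after first arranging $v_t$ to be UCP) extending each $v_t$ to a UCP map $\tilde v_t : C^*_e(A) \to M_{n_t}$ via Arveson's extension theorem, one aims to show that $w_t \tilde v_t$ converges pointwise to the identity on $C^*_e(A)$. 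The delicate step is handling adjoints, and here the unitary generators of $A$ are the key input: for a unitary $u \in A$, Choi's multiplicative domain inequality places $u$ in the multiplicative domain of $\tilde v_t$ as soon as $\tilde v_t(u)$ is itself unitary, which then lets one compute $\tilde v_t(u^*) = \tilde v_t(u)^*$ and propagate convergence from $A$ to $A^*$ and to their products. Controlling the unitary-preservation of $\tilde v_t$ is the subtle part; one may need a finer choice of CPAP-maps adapted to the generating unitaries, possibly leveraging the UEP through a variant of the argument behind Proposition \ref{eex3}. Once a CPAP is secured on all of $C^*_e(A)$, nuclearity follows at once.
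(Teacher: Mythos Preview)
Your treatment of the subexact case is correct and essentially matches the paper's argument; the paper additionally passes through the unitization $A^1$ via Corollary \ref{w16} before invoking Theorem \ref{ewepcnuc}, since the latter is stated for unital algebras, but this is minor.

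The generated-by-unitaries case, however, has a genuine gap that you yourself flag but do not close. Your plan is to extend the approximating maps $v_t : A \to M_{n_t}$ to UCP maps $\tilde v_t : C^*_{\rm e}(A) \to M_{n_t}$ via Arveson and then argue that $w_t \tilde v_t \to \mathrm{id}$ on all of $C^*_{\rm e}(A)$. But Arveson extension gives no control whatsoever over $\tilde v_t$ on $A^*$: a unitary $u \in A$ is sent by $\tilde v_t$ merely to a contraction in $M_{n_t}$, so $u$ need not lie in the multiplicative domain of $\tilde v_t$, and there is no reason $\tilde v_t(u^*)$ should be close to $\tilde v_t(u)^*$. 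Your proposed fix of ``a finer choice of CPAP-maps adapted to the generating unitaries'' is an aspiration rather than an argument, and the allusion to Proposition \ref{eex3} does not help, since that route goes through $Ext$ and the LLP rather than exactness of $C^*_{\rm e}(A)$.

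The paper bypasses all of this with a single citation. Since $A$ is generated by unitaries, so is $C^*_{\rm e}(A)$, and then $C^*_{\rm e}(A)$ is nuclear by \cite[Theorem 13.4]{Pisbook}. The point is that the $C^*$-nuclearity of $A$ forces, for every $C^*$-algebra $D$, the min and max norms of $C^*_{\rm e}(A) \otimes D$ to agree on the span of the generating unitaries (the operator-algebra max norm on $A \otimes D$ always dominates the norm inherited from $C^*_{\rm e}(A) \maxten D$), and Pisier's theorem converts precisely this hypothesis into nuclearity of $C^*_{\rm e}(A)$. No hand-built CPAP is required.
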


\begin{proof} If $A$ is $C^*$-nuclear then so is $A^1$ by Corollary
\ref{w16}. Similarly if $A$ is subexact then so is $A^1$, since the
unitization of an exact $C^*$-algebra is exact. By Theorem
\ref{ewepcnuc} it follows that $C^*_{\rm e}(A^1) = C^*_{\rm e}(A)^1$
is nuclear and hence so is $C^*_{\rm e}(A)$.  On the other hand if
$A$ is generated by unitaries, then so is $C^*_{\rm e}(A)$, and so
$C^*_{\rm e}(A)$ is nuclear by \cite[Theorem 13.4]{Pisbook}.
\end{proof}

\section{Examples}

This main purpose of this section is to illuminate connections (or lack
thereof) of the properties studied above, in the case of some
extremely commonly encountered examples, to the matching
$C^*$-algebra properties for their $C^*$-covers.

\subsection{The disk algebra} \label{disk}
 It is well known (see e.g.\ \cite[6.2.5]{BLM}) that $A(\Ddb)$ is
$C^*$-nuclear.  Hence it has the AWEP and the HLLP, etc. We shall
show that $\cm{A(\Ddb)}$ has the LLP but is not nuclear (nor exact).
This shows amongst other things that the converse of the first
assertion in Proposition \ref{LLPd} fails.

 To see that $\cm{A(\Ddb)}$ is not exact, we will use the fact that
 $\cm{A(\Ddb)}$ is the
universal $C^*$-algebra generated by a contraction.  Let $B$ be any
separable $C^*$-algebra which is not exact. Since $\Kdb \minten B$
contains a complemented copy of $B$ and exactness, viewed as an
operator space property, would pass to this copy, it follows that
$\Kdb \minten B$ is not exact. By \cite{OlsZ}, $\Kdb \minten B$ is
singly generated as a $C^*$-algebra by a contractive element, call
it $x$. Since $\cm{A(\Ddb)}$ is the universal $C^*$-algebra
generated by a contraction, there exists a $*$-representation $\pi:
\cm{A(\Ddb)} \to \Kdb \minten B$ which is onto.  Since the exactness
is preserved by $C^*$-quotients it must be the case that
$\cm{A(\Ddb)}$ is not exact, and hence is not nuclear.

One may identify $\cm{A(\Ddb)}$ with $C^*_u<\Cdb>$, the universal
$C^*$-algebra for the operator space $\Cdb$, by comparing their
universal properties (see \cite{Oz1}).  With this identification we
can use \cite[Theorem 16.5]{Pisbook} to see that $\cm{A(\Ddb)}$ does
in fact have the LLP.

In \cite[Proposition 16.13]{Pisbook} one finds the remarkable fact
that $\cm{A(\Ddb)}$ having the WEP, is equivalent to Kirchberg's
important conjecture from \cite{Kir1} that WEP implies LLP.    It is
easy to argue that this is also equivalent to whether $\cm{A(\Ddb)}
\minten \cm{A(\Ddb)} = \cm{A(\Ddb)} \maxten \cm{A(\Ddb)}$.   Indeed
this follows from Kirchberg's remarkable result from \cite{Kir1}
that a $C^*$-algebra $B$ has WEP iff $B \minten B^{\rm op} = B
\maxten B^{\rm op}$; together with the fact that $\cm{A(\Ddb)} =
\cm{A(\Ddb)}^{\rm op}$.  The latter is a special case of the more
general fact that for an operator algebra $A$, $\cm{A^{\rm op}} =
\cma^{\rm op}$; whose proof is left as a simple exercise.

We remark that it is easy to see that  $$\cm{A(\Ddb) \maxten
A(\Ddb)} \neq \cm{A(\Ddb)} \maxten \cm{A(\Ddb)} .$$   Indeed by
(6.9) in \cite{BLM}, we have $A(\Ddb) \maxten A(\Ddb) = A(\Ddb)
\minten A(\Ddb)$ is the bidisk algebra (see \ref{bidi}). A pair of
commuting contractive representations of $A(\Ddb)$ gives rise to a
representation of $A(\Ddb) \maxten A(\Ddb)$, and hence to a
$*$-representation $\pi$ of $\cm{A(\Ddb) \maxten A(\Ddb)}$. It is
easy to construct an example of such representations (even two
dimensional, taking $z$ to $E_{21}$, where $z$ is the usual
generator of $A(\Ddb)$) such that $\pi(z \otimes 1)$ does not
commute with $\pi(1 \otimes \bar{z})$. On the other hand, these
would have to commute for any representation $\pi$ of $\cm{A(\Ddb)}
\maxten \cm{A(\Ddb)}$, by  \cite[Corollary 6.1.7]{BLM}.

\subsection{The bidisk algebra} \label{bidi}
 Using a result of Parrott \cite{Par} one may see
that the bidisk algebra $A(\Ddb^2)$ is $C^*$-nuclear (see e.g.\ p.\
266 in \cite{BLM}).   Thus it has the AWEP and the HLLP, etc.  On
the other hand, it is easy to argue from the universal property of
$C^*_{\rm max}$ applied to obvious maps between $A(\Ddb)$ and
$A(\Ddb^2)$, that $\cm{A(\Ddb)} \subset \cm{A(\Ddb^2)}$.  Hence
$\cm{A(\Ddb^2)}$ is not exact.  We do not know if $\cm{A(\Ddb^2)}$
has the LLP, or if $\cm{A(\Ddb)} \maxten \cm{A(\Ddb)}$ has the LLP.

\subsection{Triangular
matrices}   \label{tri} Let $T_n$ denote the $n \times n$ upper
triangular matrices, which are known to be $C^*$-nuclear \cite{PaP},
and hence it also has AWEP and the HLLP, etc. We will show that
$\cm{T_n}$ is not exact if $n \geq 3$, but it does have the LLP.
Note that  $\cm{T_2}$ is nuclear, since it can  be identified with
the subalgebra of $C([0,1],M_2)$ consisting of matrices which are
diagonal matrices at $t=0$  (see
 \cite[2.4.5]{BLM}).

For $n \geq 3$ we will first  show that $T_n$ is essentially a free
product of copies of $T_2$. Define for $1 \leq i \leq n-1$ the
algebra
\[ A_i := \underbrace{\mathbb{C} \oplus \cdots \oplus
\mathbb{C}}_{i-1 \mbox{ copies}} \oplus T_2 \oplus
\underbrace{\mathbb{C}  \oplus \cdots \oplus \mathbb{C}}_{n-i-1
\mbox{ copies}} \subseteq M_n.\] We will denote by $\iota_i$ the
inclusion map of $A_i$ into $M_n$ and we will let $D$ be the
subalgebra of $A_i$ given by the diagonal matrices.

\begin{lemma} The algebra $T_n$ is completely isometrically
isomorphic to $*_D A_i$. \end{lemma}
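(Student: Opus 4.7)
The plan is to show that $T_n$ has the universal property of the amalgamated free product $*_D A_i$. Each inclusion $\iota_i \colon A_i \hookrightarrow T_n$ is completely isometric (since $A_i \subseteq T_n \subseteq M_n$) and these embeddings all restrict to the identity on the common subalgebra $D$. By the universal property of $*_D A_i$, we obtain a completely contractive unital homomorphism $\Phi \colon *_D A_i \to T_n$. Surjectivity of $\Phi$ follows because $D$ together with the superdiagonal matrix units $E_{i,i+1} \in A_i$ generate $T_n$: any $E_{jk}$ with $j<k$ equals $E_{j,j+1} E_{j+1,j+2} \cdots E_{k-1,k}$.

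To see that $\Phi$ is a complete isometry I would verify the matching universal property on the $T_n$ side. Given completely contractive homomorphisms $\rho_i \colon A_i \to B(H)$ agreeing on $D$, set $p_j := \rho_1|_D(E_{jj})$ (an orthogonal resolution of $1_{B(H)}$) and $v_i := \rho_i(E_{i,i+1})$, a contraction satisfying $v_i = p_i v_i p_{i+1}$. Define $\rho\colon T_n \to B(H)$ linearly by
\[
\rho(E_{jj}) = p_j, \qquad \rho(E_{jk}) = v_j v_{j+1}\cdots v_{k-1}\quad (j<k).
\]
A direct check using orthogonality of the $p_j$'s and the source/range conditions on the $v_i$'s confirms that $\rho$ is an algebra homomorphism with $\rho|_{A_i} = \rho_i$.

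The crux, and the main obstacle, is verifying that $\rho$ is completely contractive. My approach is a dilation: apply the defect-operator construction to dilate each contraction $v_i$ to a unitary $U_i$ between enlarged spaces $K_i, K_{i+1}$ (padding with auxiliary summands to ensure uniform dimension), then assemble a complete system of matrix units on $K = K_1 \oplus \cdots \oplus K_n$ via $\tilde E_{jk} := U_j U_{j+1}\cdots U_{k-1}$ for $j<k$, $\tilde E_{jj} := I_{K_j}$, and $\tilde E_{kj} := \tilde E_{jk}^*$. These define a unital $*$-representation $\pi \colon M_n \to B(K)$. The delicate point is to arrange the dilation so that $H = \bigoplus_j p_j H$ sits in $K$ as a semi-invariant subspace for $\pi(T_n)$ in Sarason's sense; once that is achieved, the compression $P_H \pi(\cdot) P_H$ is multiplicative on $T_n$, agrees with $\rho$ on generators, and hence equals $\rho$ everywhere. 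This exhibits $\rho$ as the compression of a $*$-representation of $M_n$ restricted to $T_n$, proving it is completely contractive. Alternatively, one can bypass the explicit dilation by appealing to the known description of $T_n$ as the tensor algebra of the path quiver $1\to 2\to\cdots\to n$, where the existence of such a representation $\rho$ from the data $(p_j, v_i)$ is part of the standard theory.
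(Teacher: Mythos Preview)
Your approach is correct and runs parallel to the paper's, but the two arguments are organized differently and lean on the same underlying fact from different directions.

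The paper does not verify the universal property of the free product on the $T_n$ side. Instead it writes down an explicit inverse $\pi\colon T_n\to *_D A_i$ sending each $E_{i,i+1}$ to the canonical copy of $E_{1,2}\in A_i$, checks on generators that $\pi$ and $\Phi=*\iota_i$ are mutual inverses, and then invokes the result of McAsey--Muhly \cite{RONC} that a homomorphism out of $T_n$ is completely contractive as soon as it is contractive on the matrix units. Since $\pi(E_{j,k})$ is a product of contractions, this finishes the proof in one line.

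Your route---showing that any compatible family $(\rho_i)$ extends to a completely contractive $\rho\colon T_n\to B(H)$---amounts to reproving that same McAsey--Muhly fact in general, rather than citing it for the single map $\pi$. The dilation you sketch is the right mechanism, and your identification of the semi-invariance of $H$ in $K$ as the delicate point is accurate: dilating the $v_i$ independently to unitaries $U_i$ does \emph{not} automatically give $P_{H_j}U_jU_{j+1}\cdots U_{k-1}|_{H_k}=v_jv_{j+1}\cdots v_{k-1}$; one has to build the defect spaces coherently (essentially layer by layer along the path, routing each defect summand into the orthogonal complement at the next level) so that the cross-terms vanish. This is exactly what the \cite{RONC}/tensor-algebra machinery packages, and your alternative appeal to the path-quiver tensor algebra is a legitimate and standard shortcut.

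In short: both proofs reduce to ``contractive on matrix units $\Rightarrow$ completely contractive'' for $T_n$. The paper cites it; you rederive it. The paper's version is shorter, yours is more self-contained.
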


\begin{proof}
For each $i$ we have $A_i \subseteq T_n$ completely isometrically
 isomorphically.  It follows that there is a completely contractive
representation $*\iota_i$ of $*_DA_i$ into $T_n$. The range of
$*\iota_i$ contains a generating set for $T_n$ and hence the
representation $*\iota_i$ maps onto $T_n$. Next let $\pi: T_n \to
*_DA_i$ be given by letting
\[ \pi(E_{i,i}) \mapsto \underbrace{ 0 \oplus \cdots
\oplus 0}_{i-1 \mbox{ copies}} \oplus 1 \oplus \underbrace{0 \oplus
\cdots \oplus 0}_{n-i \mbox{ copies}} \in D
\] and \[ \pi(E_{i,i+1}) \mapsto \underbrace{ 0 \oplus \cdots \oplus
0}_{i-1 \mbox{ copies}} \oplus E_{1,2} \oplus \underbrace{ 0 \oplus
\cdots \oplus 0}_{n-i-1 \mbox{ copies}} \in A_i,\] where $E_{i,j}$
is the usual elementary matrix. It is easy to see that $ \pi$ is
well defined. Now extending using linearity and algebra operations
we have a representation. Notice that $ \pi \circ *\iota_i $ is
trivial on generators, as is $
*\iota_i  \circ \pi$, and hence these two maps are inverses.
The result will follow if we can show that $\pi$ is completely
contractive.
  This follows easily from the now standard
result from \cite{RONC} stating that it suffices to show that $\pi$
is contractive on matrix units.  But by construction,
$\pi(E_{i,i+k}) = \pi(E_{i,i+1})\pi(E_{i+1,i+2}) \cdots
\pi(E_{i+{k-1},i+k})$ which is a product of contractions and hence
is a contraction.  This is true for  $ 1 \leq i \leq n$ and $
1 \leq k \leq n-i$, and we are done.  \end{proof}

We now combine the last result with the fact that free products
`commute' with $C^*_{\rm max}$ (see \cite[Proposition 2.2]{BMax}),
to obtain
\[ \cm{T_n} = \underset{\scriptscriptstyle i}{\displaystyle{\ast}}\,
\cm{A_i} = \underset{\scriptscriptstyle i}{\displaystyle{\ast}}\,
\underbrace{\mathbb{C} \oplus \cdots \oplus \mathbb{C}}_{i-1 \mbox{
copies}} \oplus \cm{T_2} \oplus \underbrace{\mathbb{C} \oplus \cdots
\oplus \mathbb{C}}_{n-i-1 \mbox{ copies}} .\]

Since $\cm{T_n}$ is the $n$-fold free product of nuclear
$C^*$-algebras, by \cite[Theorem 13.2]{Pisbook} we have that
$\cm{T_n}$ has the LLP for all $n$.  The fact that $\cm{T_n}$ is not
exact for $n \geq 3$ follows from the next lemma, and the fact that
 $C([0,1])
*C([0,1])$ is not exact.  The latter is probably well known, but for the
readers convenience we give a proof, using facts in \cite[Section
5]{KiW} about the universal $C^*$-algebra $C^*_u(X)$ of an operator
system $X$.  Namely, $C^*_u(\ell^\infty_2) = C([0,1])$ and
$C^*_u(\ell^\infty_3)$ is not exact.   Define $\pi_k: \ell^\infty_2
\rightarrow \ell^\infty_3$ by $\pi_1((\lambda, \mu)) = ( \lambda,
\lambda, \mu )$ and  $\pi_2((\lambda, \mu)) = ( \lambda, \mu, \mu)$.
Clearly $\pi_k$ is a unital complete isometry for $k = 1, 2$. Notice
also that the ranges of the $\pi_k$ are jointly spanning.  By the
universal property for $C^*_u(\ell^\infty_2)$, there are unital
$*$-homomorphisms $\widetilde{\pi_k}: C([0,1]) \rightarrow
C^*_u(\ell^\infty_3)$ whose ranges together generate
$C^*_u(\ell^\infty_3)$.  By the universal property for free
products, there is a $*$-representation of $C([0,1])*C([0,1])$ onto
$C^*_u(\ell^\infty_3)$.  Since exactness passes to $C^*$-quotients,
it follows that $C([0,1]) *C([0,1])$ is not exact.

\begin{lemma} There is a completely isometric embedding of
the amalgamated free product $C([0,1])*C([0,1])$ into $\cm{T_n}$ for
$n \geq 3$.\end{lemma}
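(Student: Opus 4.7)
The plan is to combine the fact that $\cm{T_2}$ contains a unital copy of $C([0,1])$ with the amalgamated free product decomposition $\cm{T_n} = *_D \cm{A_i}$ (over $D = \Cdb^n$) established above, in order to realize $C([0,1]) *_\Cdb C([0,1])$ as a unital $C^*$-subalgebra of $\cm{T_n}$.

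First, I would identify $C([0,1])$ inside $\cm{T_2}$: using the realization $\cm{T_2} \cong \{f \in C([0,1], M_2): f(0) \text{ diagonal}\}$ noted just above, the element $|e_{12}|$ corresponds to $t \mapsto t\, e_{22}$ and hence has spectrum $[0,1]$ in $\cm{T_2}$, so $C^*(|e_{12}|, 1) \cong C([0,1])$. For $n \geq 3$, I would then construct two unital completely isometric homomorphisms $\phi_k: T_2 \to T_n$ ($k = 1, 2$) by $\phi_k(e_{11}) = 1 - e_{k+1, k+1}$, $\phi_k(e_{22}) = e_{k+1, k+1}$, $\phi_k(e_{12}) = e_{k, k+1}$; by universal property these extend to unital $*$-embeddings $\tilde\phi_k: \cm{T_2} \to \cm{T_n}$.

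To produce noncommuting generators, I would pick $h := \tfrac{1}{2}(1 + e_{12} + e_{12}^*) \in \cm{T_2}$, a positive contraction with spectrum $[0,1]$. In $\cm{T_n}$ the images $\tilde\phi_1(h) = \tfrac{1}{2}(1 + e_{12} + e_{12}^*)$ and $\tilde\phi_2(h) = \tfrac{1}{2}(1 + e_{23} + e_{23}^*)$ have nonzero commutator $\tfrac{1}{4}(e_{13} - e_{31})$---since the $T_n$-relations give $e_{12} e_{23} = e_{13}$ while $e_{12} e_{23}^* = e_{12}^* e_{23} = 0$---and each $\tilde\phi_k(h)$ retains spectrum $[0,1]$ in $\cm{T_n}$, so $B_k := C^*(\tilde\phi_k(h), 1) \cong C([0,1])$, with $B_k \cap D = \Cdb \cdot 1$. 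The universal property of the unital free product then yields a unital $*$-homomorphism $\Phi: C([0,1]) *_\Cdb C([0,1]) \to \cm{T_n}$.

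The main obstacle is to show $\Phi$ is injective, hence a complete isometry. Here one uses that $\cm{T_n} = *_D \cm{A_i}$ is the \emph{universal} amalgamated free product and that each $B_k \subset \cm{A_k}$ meets $D$ only in $\Cdb \cdot 1$. Given any unital $*$-representation of $C([0,1]) *_\Cdb C([0,1])$ on $B(H)$---equivalently, a pair of self-adjoint contractions $P_1, P_2 \in B(H)$---the goal is to produce a unital $*$-representation $\sigma: \cm{T_n} \to B(H')$ on some $H' \supset H$ such that $H$ is a joint invariant subspace of $\sigma(\tilde\phi_1(h))$ and $\sigma(\tilde\phi_2(h))$, with compressions realizing $P_1, P_2$. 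Such $\sigma$ is built by choosing a block decomposition $H' = H_1' \oplus \cdots \oplus H_n'$ and contractions $x_i: H_{i+1}' \to H_i'$, via the universal parameterization of $\cm{T_n}$-representations. Arranging this simultaneous dilation so the compression remains multiplicative on both $B_1$ and $B_2$ is the crux; this is a free-product analogue of the standard Sz.-Nagy/Stinespring dilation, enabled by the universality of $*_D$ over its factors.
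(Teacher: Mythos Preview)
Your setup is fine up to the point where you define $\Phi$: the free-product decomposition $\cm{T_n}=\ast_D\cm{A_i}$ is the right starting point, your maps $\phi_k:T_2\to T_n$ do land in $A_k$ and extend to injective $*$-homomorphisms $\tilde\phi_k:\cm{T_2}\to\cm{A_k}\hookrightarrow\cm{T_n}$ (though you should say a word about injectivity, since the universal property alone only gives a $*$-homomorphism), the element $h$ does have spectrum $[0,1]$, and the commutator computation is correct.

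The genuine gap is the injectivity of $\Phi$. You correctly flag this as the crux and then essentially stop: the ``simultaneous dilation'' you describe is asserted but not constructed. Concretely, you would need, for an arbitrary pair of positive contractions $P_1,P_2\in B(H)$ (not merely self-adjoint, since $\sigma(h)=[0,1]$), to produce $H'=H_1'\oplus H_2'\oplus H_3'$, contractions $y_k:H_{k+1}'\to H_k'$, and a copy of $H$ inside $H'$ that is \emph{reducing} for both $\tfrac12(1+y_k+y_k^*)$ with the prescribed restrictions. There is no evident mechanism for this; the universality of $\ast_D$ lets you build representations of $\cm{T_n}$ from compatible representations of the $\cm{A_i}$, but it does not by itself produce reducing subspaces with prescribed compressions. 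Carrying this through would amount to reproving, in this special case, the embedding theorem for amalgamated free products that the paper simply cites.

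The paper's argument is different and much shorter. It chooses the copies of $C([0,1])$ differently---as the $(2,2)$-entry functions inside $\cm{T_2}\oplus\Cdb$ and $\Cdb\oplus\cm{T_2}$---precisely so that there are obvious conditional expectations $E_k$ from each free factor onto its copy of $C([0,1])$, compatible with the conditional expectation $D=\Cdb^3\to\Cdb$ picking out the middle entry. One then invokes the Armstrong--Dykema--Exel--Li embedding theorem (Proposition~2.4 of \cite{ADEL}), which says exactly that such compatible conditional expectations force $C([0,1])\ast_{\Cdb}C([0,1])\hookrightarrow\cm{A_1}\ast_D\cm{A_2}$. Your copies $B_k=C^*(\tilde\phi_k(h),1)$ do not come with any visible conditional expectation from $\cm{A_k}$ compatible with $D\to\Cdb$, so even if you switched strategies you would still need to change the embedding. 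The moral is that injectivity for maps out of free products is delicate, and the standard tool is conditional expectations, not dilation.
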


\begin{proof} Without loss of generality we will stick to the case
of $n=3$.  The proof $n > 3$ will follow in the same manner, or by
noting $\cm{T_3} \subset \cm{T_n}$.

We have $\cm{T_3} = (\cm{T_2} \oplus \Cdb) *_D \, (\Cdb \oplus
\cm{T_2})$.
Notice that the map $E_1:   \begin{bmatrix} f_{1,1} & f_{1,2} \\
f_{2,1} & f_{2,2} \end{bmatrix} \oplus \lambda \mapsto f_{2,2}$ is a
conditional expectation of $\cm{T_2} \oplus \Cdb$ onto a copy of
$C([0,1])$.  Similarly $E_2: \lambda \oplus \begin{bmatrix} g_{1,1}
& g_{1,2} \\ g_{2,1} & g_{2,2} \end{bmatrix}  \mapsto g_{2,2}$
defines a conditional expectation onto a copy of $C([0,1])$. Lastly
the map $d: D \to \mathbb{C}$ taking a $3 \times 3$ diagonal matrix to its
$2$-$2$ entry,
 is a conditional expectation
of $D$ onto a copy of $\Cdb$.   The result now follows from
\cite[Proposition 2.4]{ADEL}. \end{proof}

\subsection{The algebra $\mathcal{U}(X)$ for an operator space $X$} \label{uofx}

The operator algebra $\mathcal{U}(X)$ consists of $2 \times 2$ upper
triangular matrices with elements from $X$ in the $1$-$2$ corner and
scalars in the two diagonal entries; see 2.2.10-2.2.11 of
\cite{BLM}. We refer the reader to \cite[Section 6.4]{BLM} for a
discussion of the $\delta$ norm on tensor products.

\begin{lemma}\label{ux} If $X$ is an operator space, and if $\mathcal{U}(X)
\minten D = \mathcal{U}(X) \maxten D$ isometrically for a unital
$C^*$-algebra $D$, then $ \delta = {\rm min}$ on $X \otimes D$.  The
converse of this is true if $D \cong M_n(D)$ for all $n \in \Ndb$.
In particular, $X$ has the $1$-OLLP of Ozawa {\rm
\cite{Oz1,Pisbook}} if and only if $\mathcal{U}(X)$ is
$\Bdb$-nuclear.
\end{lemma}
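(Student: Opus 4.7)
The proof rests on viewing $X$ as the $(1,2)$-corner of $\mathcal{U}(X)$, so that $\mathcal{U}(X) \otimes D$ becomes a 2-by-2 upper-triangular matrix with $D$ on the diagonal and $X \otimes D$ in the off-diagonal entry. I would first identify the induced norms: since $\minten$ is injective and the embedding $X \hookrightarrow \mathcal{U}(X)$ is a complete isometry, the norm on $X \otimes D$ inherited from $\mathcal{U}(X) \minten D$ is the usual min norm; and by the construction of $\delta$ in BLM 6.4, the $\delta$-norm on $X \otimes D$ coincides with the norm inherited from $\mathcal{U}(X) \maxten D$. With these identifications in hand, the forward direction is immediate: equality of the two tensor norms on $\mathcal{U}(X) \otimes D$ restricts to equality on the $(1,2)$-corner, giving $\delta = \min$ on $X \otimes D$.

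For the converse, I would assume $\delta = \min$ on $X \otimes D$ and $D \cong M_n(D)$ for every $n$. The natural identifications $M_n(X) \otimes D \cong X \otimes M_n(D) \cong X \otimes D$ ensure the hypothesis lifts automatically to all matrix levels. Given a generic element $T = e_{11} \otimes a + e_{12} \otimes u + e_{22} \otimes b$ in $\mathcal{U}(X) \otimes D$, I would analyze a commuting pair of completely contractive representations $(\pi, \rho)$ of $(\mathcal{U}(X), D)$ on a Hilbert space $H$. The representation $\pi$ decomposes via orthogonal projections $P_i = \pi(e_{ii})$ with $P_1+P_2 = I$ together with a complete contraction $\phi \colon X \to P_1 B(H) P_2$; commutativity of $\rho$ with $\pi$ yields $\rho = \rho_1 \oplus \rho_2$ on $P_1 H \oplus P_2 H$ with $\rho_1(d) \phi(x) = \phi(x) \rho_2(d)$. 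Thus $(\pi \cdot \rho)(T)$ is block upper-triangular, with diagonal blocks $\rho_1(a), \rho_2(b)$ and off-diagonal block obtained from $u$ via the bilinear, commutation-respecting pairing of $\phi$ and $\rho$; by the universal description of $\delta$, the off-diagonal block has norm at most $\|u\|_{\delta} = \|u\|_{\min}$.

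The main obstacle is then to convert these block bounds into a bound by $\|T\|_{\min}$, and this is where the stability $D \cong M_2(D)$ is essential. The plan is to use the $M_2$-matrix units sitting inside $D$, together with the projections $P_1, P_2$, to repackage the block data $(\rho_1, \rho_2, \phi)$ as a single commuting pair $(\tilde{\pi}, \tilde{\rho})$ that factors through a completely isometric representation of $\mathcal{U}(X) \minten D$, thereby yielding the desired estimate $\|(\pi \cdot \rho)(T)\| \le \|T\|_{\min}$. Without stability no such merging of rows and columns is possible, so this step is precisely what forces the hypothesis on $D$ and is the principal difficulty of the proof. Finally, the ``in particular'' follows from the main equivalence applied with $D = \Bdb$: since Ozawa's $1$-OLLP for $X$ is by definition (or by a standard characterization) the condition $\delta = \min$ on $X \otimes \Bdb$, and $\Bdb$ is a unital $C^*$-algebra with $\Bdb \cong M_n(\Bdb)$, we conclude that $X$ has the $1$-OLLP if and only if $\mathcal{U}(X)$ is $\Bdb$-nuclear.
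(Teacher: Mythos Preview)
Your forward direction is correct and is essentially the paper's argument, phrased via the (true) identification of the $\delta$-norm on $X\otimes D$ with the corner norm inherited from $\mathcal{U}(X)\maxten D$. For the converse, your decomposition of the commuting pair $(\pi,\rho)$ into $(\phi,\rho_1,\rho_2)$, the bound on the off-diagonal block by $\Vert u\Vert_\delta$, and the observation that $D\cong M_n(D)$ lifts the hypothesis to all matrix levels, are also exactly what the paper does.

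The gap is in the last step. Your plan --- use $M_2$-matrix units in $D$ together with $P_1,P_2$ to ``repackage'' $(\rho_1,\rho_2,\phi)$ as a new commuting pair $(\tilde\pi,\tilde\rho)$ that factors through a completely isometric representation of $\mathcal{U}(X)\minten D$ --- is not a proof: no construction is given, and it is unclear how such a pair would dominate the original $(\pi\cdot\rho)(T)$. I do not see a way to make that outline work directly. The paper's mechanism is different and concrete: it notes that the assignment $u:x\otimes d\mapsto \phi(x)\rho_2(d)$ is a $D$-bimodule map from $X\minten D$ into $B(P_2H,P_1H)$ (bimodule structure via $\rho_1,\rho_2$), uses the matrix stability of $D$ to see that $u$ is \emph{completely} contractive, and then invokes the functoriality of the linking-algebra construction, (3.12) of \cite{BLM}, to upgrade this to the full $2\times 2$ estimate
\[
\left\Vert\begin{pmatrix}\rho_1(\sum_k\lambda_k d_k) & \sum_k\phi(x_k)\rho_2(d_k)\\ 0 & \rho_2(\sum_k\mu_k d_k)\end{pmatrix}\right\Vert
\;\le\;
\left\Vert\begin{pmatrix}\sum_k\lambda_k\, 1\otimes d_k & \sum_k x_k\otimes d_k\\ 0 & \sum_k\mu_k\, 1\otimes d_k\end{pmatrix}\right\Vert
=\Vert T\Vert_{\min}.
\]
So the missing idea in your converse is the completely contractive bimodule map together with the linking-algebra functoriality; the matrix stability of $D$ enters to make $u$ completely contractive, not to supply matrix units for a repackaging trick.
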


\begin{proof}If $\Phi : X \to B(H)$ and $\pi : D \to B(H)$ are
commuting complete contractions, with $\pi$ a representation, then
by \cite[Proposition 2.2.11]{BLM}  we obtain a representation
$\theta_\Phi : {\mathcal U}(X) \to B(H^{(2)})$ which commutes with
$\pi^{(2)}: D \to B(H^{(2)})$. Thus if ${\mathcal U}(X) \minten D =
{\mathcal U}(X) \maxten D$ then for $x_k \in X, d_k \in D$, and with
$a_k$ the matrix with $x_k$ in its $1$-$2$ corner and $0$ elsewhere,
we have \[ \Vert \sum_k \; \theta_\Phi(a_k) \pi^{(2)}(d_k) \Vert =
\Vert \Phi(x_k) \pi(d_k) \Vert \leq \minnorm{ \sum_k \; a_k \otimes
d_k } = \minnorm{ \sum_k \; x_k \otimes d_k } .
\] Hence $\delta = {\rm min}$ on $X \otimes D$.

Conversely, suppose $\delta = {\rm min}$ on $X \otimes D$, and let
$\theta$ and $\rho$ be two commuting completely contractive
representations of ${\mathcal U}(X)$ and $D$ respectively. The
diagonal projections in ${\mathcal U}(X)$ induce a decomposition of
the Hilbert space as a sum $H \oplus K$ so that $\theta =
\theta_\Phi$ for a complete contraction  $\Phi : X \to B(K,H)$, and
$\rho(d) = \pi_1(d) \oplus \pi_2(d)$, for $d \in D$ and two
$*$-representations $\pi_1, \pi_2$ of $D$ on $H$ and $K$
respectively, such that $\Phi(x) \pi_2(d) = \pi_1(d) \Phi(x)$ for $x
\in X, d \in D$.  Note that $\theta_\Phi \circ c$ commutes with
$\rho$, where $c : X \to {\mathcal U}(X)$ is the canonical
embedding. It follows that with notation as above, for $x_k \in X,
d_k \in D$, \[ \Vert  \sum_k \; \theta_\Phi(c(x_k)) \rho(d_k) \Vert
= \Vert \sum_k \; \Phi(x_k) \pi_2(d_k) \Vert \leq \Vert \sum_k \;
x_k \otimes d_k \Vert_\delta = \minnorm{ \sum_k \; x_k \otimes d_k }
. \] Notice that $W = \Phi(X) \pi_2(D)$ is an operator $D$-bimodule,
a $D$-subbimodule of $B(K,H)$.    Also, $X \minten D$ is an operator
$D$-bimodule with the canonical actions. The computation above shows
that the map $u : x \otimes d \mapsto \Phi(x) \pi_2(d)$ is a
contractive $D$-bimodule map from $X \minten D$ to $W$.  If $D \cong
M_n(D)$, then we also have $\delta = {\rm min}$ on $X \otimes
M_n(D)$, and it is easy to see from this that $u$ is completely
contractive.  The map induced from the bimodule map $u$ by (3.12) of
\cite{BLM} is also completely contractive. One may argue from this
that for $b_1 , \cdots , b_n \in {\mathcal U}(X), d_1 , \cdots , d_n
\in D$:
\[ \Vert  \sum_k \; \theta(b_k) \rho(d_k) \Vert \leq \left| \left|
\left[ \begin{array}{ccl} \sum_k \; \lambda_k 1 \otimes d_k \; &
\sum_k \; x_k  \otimes d_k  \\ 0 \; & \sum_k \; \mu_k 1 \otimes d_k
\end{array} \right]
 \right| \right|
= \minnorm{ \sum_k \; b_k  \otimes d_k} .\] Here $\lambda_k , x_k ,
\mu_k$ are the three nonzero `corners' of $b_k$, and the norm of the
middle matrix is taken in $M_2(B(H \otimes K)$, where $X \subset
B(H), D \subset B(K)$. That is, ${\mathcal U}(X) \minten D =
{\mathcal U}(X) \maxten D$ isometrically.

We leave the remaining assertion to the reader. \end{proof}

\begin{corollary} \label{corux}  There exist unital operator algebras
$A$ with $C^*_{\rm e}(A)$ nuclear (hence having the LLP), but
$A$ is neither $\Bdb$-nuclear nor $C^*$-nuclear nor has the AWEP.
\end{corollary}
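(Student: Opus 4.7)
The plan is to apply Lemma \ref{ux}. For any operator space $X$ that fails Ozawa's $1$-OLLP, the unital operator algebra $A := \mathcal{U}(X)$ fails $\Bdb$-nuclearity, and a fortiori $C^*$-nuclearity. I would choose $X$ finite-dimensional, so that $A$ itself is finite-dimensional. The existence of a finite-dimensional operator space $X$ without the $1$-OLLP is a known, if nontrivial, input from operator-space theory: by Lemma \ref{ux} it amounts to finding finite-dimensional $X$ for which $\min \neq \delta$ on $X \otimes B(\ell^2)$, and such examples arise from Junge--Pisier constructions (finite-dimensional subspaces of $B(\ell^2)$ built from free unitaries); see \cite[Chapter 16]{Pisbook} and \cite{Oz1}.

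Having fixed such an $X$, I would verify the three failures together with nuclearity of the $C^*$-envelope as follows. Finite-dimensionality of $A$ makes it trivially exact, so by Theorem \ref{ewepcnuc}, which asserts that exactness combined with AWEP forces $C^*$-nuclearity, the failure of $C^*$-nuclearity forces the failure of AWEP. Thus $A$ lacks $\Bdb$-nuclearity, $C^*$-nuclearity, and the AWEP. Meanwhile, Hamana's theorem that the injective envelope of a finite-dimensional operator system is finite-dimensional, applied to the Paulsen system of $A$, shows that $I(A)$ is finite-dimensional; consequently $C^*_{\rm e}(A) \subset I(A)$ is a finite-dimensional $C^*$-algebra, hence nuclear and, in particular, has the LLP.

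The main obstacle is the operator-space-theoretic input: securing a \emph{finite-dimensional} $X$ that fails $1$-OLLP (if one insisted on avoiding finite dimensions, one would instead need a separate argument establishing nuclearity of $C^*_{\rm e}(\mathcal{U}(X))$, which is delicate because general $\mathcal{U}(X)$ need not embed into a finite matrix algebra). Once the finite-dimensional $X$ without $1$-OLLP is granted, the rest is a formal synthesis of Lemma \ref{ux}, Theorem \ref{ewepcnuc}, and Hamana's envelope theory.
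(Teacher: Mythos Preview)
Your argument is correct but differs from the paper's in the choice of $X$ and, consequently, in how nuclearity of $C^*_{\rm e}(A)$ is obtained. The paper takes $X$ to be a \emph{minimal} operator space without $1$-OLLP; then $C^*_{\rm e}(\mathcal{U}(X))$ embeds in $M_2(B)$ for a commutative $C^*$-algebra $B$ (by \cite[Theorem 4.21]{BShi}), hence is Type~I and therefore nuclear. You instead take $X$ finite-dimensional without $1$-OLLP and invoke Hamana's theorem on finite-dimensional injective envelopes to conclude that $C^*_{\rm e}(\mathcal{U}(X))$ is finite-dimensional, hence nuclear. Both routes then use Lemma~\ref{ux} to kill $\Bdb$-nuclearity (and $C^*$-nuclearity), and Theorem~\ref{ewepcnuc} together with exactness of $A$ to kill the AWEP. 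Your approach has the virtue that the nuclearity of the envelope is immediate once finite-dimensionality is in hand; the paper's approach avoids Hamana's theorem but requires the structural input from \cite{BShi} about envelopes of $\mathcal{U}(X)$ for minimal $X$. Both rely on a nontrivial operator-space existence fact (a minimal, respectively finite-dimensional, space failing $1$-OLLP); the finite-dimensional case is available via Ozawa's duality (finite-dimensional $X$ has $1$-OLLP iff $X^*$ is $1$-exact) combined with Junge--Pisier.
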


\begin{proof} If $X$ is a minimal operator space
without the $1$-OLLP, and if $A = \mathcal{U}(X)$, then  $A$ is not
$\Bdb$-nuclear by Lemma \ref{ux},  and hence not $C^*$-nuclear. But
$C^*_{\rm e}(A)$ is nuclear: it is a subalgebra of $M_2(B)$ for a
commutative $C^*$-algebra $B$ by \cite[Theorem 4.21]{BShi}, hence
Type I, and so nuclear. Since $A$ is exact but not $C^*$-nuclear it
cannot have AWEP by Theorem \ref{ewepcnuc}.
\end{proof}

From Lemma \ref{ux} and facts in \cite{Oz1}, it is easy to build
 $\Bdb$-nuclear operator algebras with bad properties.
Indeed for any finite dimensional operator space $X$ with $X^*$
$1$-exact, we have that $X$ has the $1$-OLLP, so that
$\mathcal{U}(X)$ is $\Bdb$-nuclear.

\medskip

{\bf Remarks.}  1)   \ We do not have an example of a $C^*$-nuclear
algebra $A$ with $C^*_{\rm e}(A)$ not nuclear, but presumably they
exist in abundance.  Equivalently, we do not know if
$C^*$-nuclearity implies subexactness.

2) \ For a given operator space it seems rather restrictive, and
therefore probably uninteresting, for $\mathcal{U}(X)$ to be
$C^*$-nuclear. Indeed, this is equivalent to saying that the
$\delta$ tensor norm agrees with the spatial (minimal) norm on $X
\otimes D$, for all $C^*$-algebras $D$.  Of course this occurs if $X
= \Cdb$, and if $X$ is a Hilbert row or column space (since $C_n
\otimes_{\rm h} D = C_n \minten D$ for any $C^*$-algebra $D$, and so
these also agree with $C_n \otimes_\delta D$, since $\delta \leq
{\rm h}$), but probably in few other cases.
We are indebted to Gilles Pisier and N. Ozawa for conversations on
this matter, which is related to the discussion on p.\ 341 of
\cite{Pisbook} of exact spaces whose dual is exact too.
If $X$ is finite dimensional with ${\mathcal U}(X)$ $C^*$-nuclear,
then as we saw in Lemma \ref{free}, ${\mathcal U}(X)$ is $1$-exact
and hence so is $X$. On the other hand, by \cite{Oz2}, since $X$ has
$1$-OLLP, $X^*$ is $1$-exact. There is only a small list of
$1$-exact finite dimensional spaces whose dual is known to be
$1$-exact too (see  p.\ 341 of \cite{Pisbook}).

\medskip

 We do not have an operator algebra version of Kirchberg's
profound characterization of separable exact $C^*$-algebras as
subalgebras of a fixed `universal' separable exact $C^*$-algebra
(see e.g.\ \cite{Kir1, Kir2, KPh}).  It seems feasible that there
does exist some such result, although the following rules out one
approach:

\begin{proposition} \label{nog} There exists a separable exact operator
space that is not linearly  completely isometric to a subspace of an
exact $C^*$-algebra.  There exists a separable unital exact operator
algebra which is not subexact.
\end{proposition}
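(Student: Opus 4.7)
My plan is to prove both parts of the proposition together by producing the exact operator space first and then passing to the algebra via the $\mathcal{U}(\cdot)$ construction.

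\emph{Reduction.} Assume we can produce a separable exact operator space $E$ that admits no completely isometric operator-space embedding into an exact $C^*$-algebra. Set $A := \mathcal{U}(E)$. By the discussion immediately before Proposition \ref{p}, exactness of $E$ as an operator space entails exactness of $A$ as an operator algebra; evidently $A$ is separable and unital. The canonical inclusion of $E$ into $\mathcal{U}(E)$ as the $(1,2)$-corner is completely isometric, and therefore $E \subset A \subset C^*_{\rm e}(A)$ completely isometrically as operator spaces. Were $A$ subexact, Proposition \ref{p2} would force $C^*_{\rm e}(A)$ to be exact, and the composite embedding would present $E$ as a completely isometric subspace of an exact $C^*$-algebra, contradicting the choice of $E$. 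Thus $A$ would be the required separable unital exact operator algebra that is not subexact.

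\emph{Producing $E$ (the main obstacle).} Fix a separable non-exact $C^*$-algebra $B$, for instance $B = C^*(\mathbb{F}_\infty)$. The strategy is to construct a separable operator subspace $E \subset B$ satisfying (i) exactness as an operator space, i.e.\ each finite-dimensional subspace of $E$ sits $(1+\varepsilon)$-completely isometrically in some $M_n$; and (ii) the rigidity property that every completely isometric operator-space embedding $E \hookrightarrow C$ into a unital $C^*$-algebra extends to a completely isometric $*$-homomorphism $B \hookrightarrow C$. Once both hold, exactness of $C$ would force exactness of $B$ (sub-$C^*$-algebras of exact $C^*$-algebras are exact), which is impossible. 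The tension is that (i) is a local, finite-dimensional approximation condition, whereas (ii) is a global rigidity condition; this is where the main work lies. My approach would be to enrich $E$ beyond the bare group generators by adjoining carefully chosen products of generators, partial isometries, and conditional-expectation-compatible pairings, so as to force multiplicative faithfulness of any complete isometry on the generators while preserving finite-dimensional matricial approximation in each slice. The UEP-style rigidity arguments appearing in Section \ref{thehllp}, together with Ozawa's operator-system rigidity constructions in \cite{Oz1}, are the natural technical tools; a fallback would be to produce $E$ indirectly as a dual of an exact operator space whose dual is known to fail $1$-exactness, following the discussion on p.~341 of \cite{Pisbook}.
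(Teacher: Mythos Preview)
Your reduction step---passing from an exact operator space $E$ with no completely isometric embedding into an exact $C^*$-algebra to the operator algebra $\mathcal{U}(E)$---is correct and is exactly what the paper does for the second assertion.

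The genuine gap is in your construction of $E$. What you have written under ``Producing $E$'' is not a proof but a research programme: you postulate a subspace $E \subset C^*(\mathbb{F}_\infty)$ that is simultaneously exact and rigid in the strong sense that every complete isometry out of $E$ extends to a $*$-monomorphism of the whole ambient algebra. You correctly identify the tension between these two requirements, but then offer only vague indications (``enrich $E$ beyond the bare group generators'', ``UEP-style rigidity arguments'', a fallback via duals) with no actual construction or argument. None of the tools you cite (the UEP material in Section~\ref{thehllp}, Ozawa's work in \cite{Oz1}, the discussion on p.~341 of \cite{Pisbook}) is known to produce such an object, and the rigidity property you ask for is extremely strong---it is not at all clear it can coexist with exactness of $E$.

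The paper's proof avoids this difficulty entirely by importing a ready-made example: Kirchberg and Wassermann \cite[Theorem 18]{KiW} already provide a separable exact operator \emph{system} $\mathcal{S}$ that is not a unital-subsystem of any exact $C^*$-algebra. The remaining work, which is short, is to upgrade ``not a unital-subsystem'' to ``not completely isometric to any subspace'' of an exact $C^*$-algebra. This is done via the Paulsen system and the ternary envelope: if $\mathcal{S}$ sat inside an exact $C^*$-algebra $A$, then the $C^*$-subalgebra of $M_2(A)$ generated by the Paulsen system of $\mathcal{S}$ would be exact, hence its quotient $C^*$-envelope and thus the ternary envelope of $\mathcal{S}$ would be exact; but that ternary envelope is $C^*_{\rm e}(\mathcal{S})$, contradicting \cite{KiW}. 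You should replace your speculative construction with this citation and the ternary-envelope argument.
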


\begin{proof} It is shown in \cite[Theorem 18]{KiW}
that there exists a separable exact operator system ${\mathcal S}$
that is not a unital-subsystem of any unital separable exact
$C^*$-algebra. Suppose that ${\mathcal S}$ was a subspace of an
exact $C^*$-algebra $A$. We will use notation from \cite{Ham} (see
also p.\ 285-286 of \cite{BShi}). Clearly $M_2(A)$ is exact, and
hence so too is its $C^*$-subalgebra generated by the copy of the `Paulsen
system' of ${\mathcal S}$. Since exactness also passes to
$C^*$-quotients, the $C^*$-envelope of the latter system is exact,
and hence so too is its upper right corner, the `ternary envelope'
of ${\mathcal S}$ (see \cite{Ham} or p.\ 286 of \cite{BShi}). This
envelope is completely isometric to $C^*_{\rm e}({\mathcal S})$, by
the uniqueness of the ternary envelope.  Thus $C^*_{\rm e}({\mathcal
S})$ is exact, with ${\mathcal S}$ as a unital subsystem,
contradicting the result cited from \cite{KiW} above.

For the last part, consider ${\mathcal U}({\mathcal S})$, an exact
separable operator algebra by the remarks above Proposition \ref{p},
which is not a subspace of an exact $C^*$-algebra. \end{proof}


\begin{thebibliography}{99}
\bibitem{AR}  A.\ Arias and H.\ P.\ Rosenthal, $M$-complete
approximate identities in operator spaces, {\em Studia Math.\ } {\bf
141} (2000), 143--200.

\bibitem{ADEL} S.\ Armstrong, K.\ Dykema, R.\ Exel, and H.\ Li, On
embeddings of full amalgamated free product $C^*$-algebras, {\em
Proc.\ Amer.\ Math.\ Soc.}{\bf 132} (2004), 2019--2030.

\bibitem{SOC}  W.\ B.\ Arveson, Subalgebras of $C^{*}$-algebras,
{\em Acta Math.\ } {\bf 123} (1969), 141--224.


\bibitem{BMax} D.\ P.\ Blecher, Modules over operator algebras, and the
maximal $C^*$-dilation, {\em J.\ Funct.\ Anal.} {\bf 169} (1999),
251--288.

\bibitem{BShi} D.\ P.\ Blecher, The Shilov boundary of an operator space
and the characterization theorems,  {\em J.\ Funct.\ Anal.} {\bf
182} (2001), 280--343.


\bibitem{Bseq}   D.\ P.\ Blecher, Extensions of operator
algebras II, In preparation.

\bibitem{BEZ} D.\  P.\ Blecher, E.\ G.\ Effros, and V.\ Zarikian, One-sided
$M$-ideals and multipliers in operator spaces, I,  {\em  Pacific J.\
Math.\ } {\bf 206}  (2002),  287--319.

\bibitem{BHN} D.\ P.\ Blecher, D.\ M.\ Hay, and M.\ Neal,
Hereditary subalgebras of operator algebras, to appear {\em Journal
of Operator Theory}, math.OA/0512417.

\bibitem{BL}  D.\ P.\ Blecher and L.\ E.\ Labuschagne, Noncommutative
function theory and unique extensions, {\em Studia Math.} {\bf 178}
(2007), 177-195.

\bibitem{BLM}  D.\ P.\ Blecher and C.\ Le Merdy, {\em Operator algebras
and their modules---an operator space approach,} Oxford Univ.\
Press, Oxford (2004).

\bibitem{BR}  D.\ P.\ Blecher and M.\ Royce, Extensions of operator algebras
I,  {\em J. Math.\ Anal.\ Appln.},
{\bf  339}  (2008),   1451--1467.


\bibitem{DP}  K.\ R.\ Davidson and S.\ C.\ Power, Best approximation in $C^*$-algebras,
{\em J.\ Reine Angew.\ Math.\ } {\bf 368} (1986), 43--62.


\bibitem{Dunc2}  B.\ L.\ Duncan, $C^*$-envelopes of universal free products
and semicrossed products for multivariable dynamics, to appear {\em
Indiana Univ.\ Math. J.}, arXiv:0710.3069

\bibitem{ER1} E.\ G.\ Effros and Z-J.\ Ruan, Mapping spaces and liftings
for operator spaces, {\em Proc. London Math. Soc.} {\bf 69} (1994),
171--197.

\bibitem{ER} E.\ G.\ Effros and Z-J.\ Ruan, {\em Operator Spaces},
London Mathematical Society Monographs, New Series, 23,
 The Clarendon Press, Oxford University Press, New York, 2000.


\bibitem{Ham} M.\ Hamana, Triple envelopes and Silov boundaries of
operator spaces, {\em  Math.\ J.\ Toyama University} {\bf 22}
(1999), 77--93.


\bibitem{Kir1} E.\ Kirchberg, On nonsemisplit extensions, tensor
products and exactness of group $C^*$-algebras, {\em Invent.\ Math.}
{\bf 112} (1993),  449--489.

\bibitem{Kir2}  E.\ Kirchberg,  On subalgebras of the CAR-algebra,
{\em  J.\ Funct.\ Anal.} {\bf 129}  (1995),  35--63.

\bibitem{KPh}  E.\ Kirchberg and N.\ Phillips, Embedding of exact
$C^*$-algebras in the Cuntz algebra $O\sb 2$, {\em J.\ Reine Angew.\
Math.} {\bf 525} (2000), 17--53.

\bibitem{KiW}  E.\ Kirchberg and S.\ Wassermann, $C^*$-algebras
generated by operator systems, {\em J.\ Funct.\ Anal.}  {\bf 155}
(1998), 324--351.

\bibitem{RONC}  M.\ J.\ McAsey and P.\ S.\ Muhly,
Representations of nonselfadjoint crossed products, {\em  Proc.\
London Math.\ Soc.}{\bf 47} (1983), 128--144.


\bibitem{OlsZ} C.\ Olsen and W.\ Zame, Some $C^*$-algebras with a single
generator, {\em Trans.\ Amer.\ Math.\ Soc.} {\bf 215} (1976),
205--217.

\bibitem{Oz1}  N.\ Ozawa, On the lifting property for universal
$C^*$-algebras of operator spaces, {\em  J.\ Operator Theory} {\bf
46} (2001), 579--591.

\bibitem{Oz2}  N.\ Ozawa, About the QWEP conjecture, {\em  Internat.\
J.\ Math.} {\bf  15}  (2004),   501--530. Math.OA/0306067.

\bibitem{Par} S.\ Parrott,  On a quotient norm and the Sz.-Nagy-Foias
lifting theorem, {\em J.\ Funct.\ Anal.} {\bf 30} (1978), 311--328.

\bibitem{Pnbook} V.\ I.\ Paulsen, {\em Completely bounded maps and operator
algebras,} Cambridge Studies in Advanced Math., 78, Cambridge
University Press, Cambridge, 2002.

\bibitem{PaP} V.\ I.\ Paulsen and S.\ C.\ Power, Tensor products of
non-self-adjoint operator algebras, {\em Rocky Mountain J.\ Math.}
{\bf 20} (1990), 331--350.


\bibitem{Pis}  G.\ Pisier,  Exact operator spaces,
{\em Recent advances in operator algebras (Orleans, 1992)},
Asterisque {\bf 232}  (1995), 159--186.

\bibitem{Pisbook}  G.\ Pisier, {\em  Introduction to operator space
theory,} London Math.\ Soc.\ Lecture Note Series, 294, Cambridge
University Press, Cambridge, 2003.



\end{thebibliography}
\end{document}